\numberwithin{equation}{section}
\newtheorem{Theorem}{Theorem}[section]
\newtheorem*{Theorem*}{Theorem}
\newtheorem{Corollary}[Theorem]{Corollary}
\newtheorem{Lemma}[Theorem]{Lemma}
\newtheorem{Proposition}[Theorem]{Proposition}
 { \theoremstyle{definition}
\newtheorem{Definition}[Theorem]{Definition}

\newtheorem{Example}[Theorem]{Example}
\newtheorem{Notations}[Theorem]{Notations}
\newtheorem{Remark}[Theorem]{Remark} }
\begin{document}
\allowdisplaybreaks

\newcommand{\arXivNumber}{2011.07527}

\renewcommand{\PaperNumber}{043}

\FirstPageHeading

\ShortArticleName{Difference Equation for Quintic 3-Fold}

\ArticleName{Difference Equation for Quintic 3-Fold}

\Author{Yaoxinog WEN}

\AuthorNameForHeading{Y.~Wen}

\Address{Korea Institute for Advanced Study, Seoul, 02455, Republic of Korea}
\Email{\href{mailto:y.x.wen.math@gmail.com}{y.x.wen.math@gmail.com}}

\ArticleDates{Received September 28, 2021, in final form June 04, 2022; Published online June 14, 2022}

\Abstract{In this paper, we use the Mellin--Barnes--Watson method to relate solutions of a~certain type of $q$-difference equations at $Q=0$ and $Q=\infty$. We consider two special cases; the first is the $q$-difference equation of $K$-theoretic $I$-function of the quintic, which is degree~25; we use Adams' method to find the extra 20 solutions at $Q=0$. The second special case is a fuchsian case, which is confluent to the differential equation of the cohomological $I$-function of the quintic. We compute the connection matrix and study the confluence of the $q$-difference structure.}

\Keywords{$q$-difference equation; quantum $K$-theory; Fermat quintic}

\Classification{14N35; 33D90; 39A13}

\section{Introduction}
Since the 1990s, the development of mirror symmetry has changed how people work on enumerative geometry and has made some surprising predictions in algebraic geometry. Calabi--Yau manifolds are essential in mirror symmetry. Among them, the quintic threefold was the first example for which mirror symmetry was used to make enumerative predictions~\cite{CdGP}.

There are several ways to state mirror symmetry. In Givental's approach to mirror symmetry, two cohomology valued formal functions play a crucial role, i.e., the so-called $J$-function and $I$-function. The $J$-function, by definition, encodes all the genus zero Gromov--Witten invariants, so it is essential. However, it is pretty hard to obtain an explicit formula. On the other hand, the $I$-function given by the oscillatory integral is computable. In~\cite{givental1998} Givental proved that $I$-function lies on the range of big $J$-function, and up to a change of coordinate, we can obtain $J$-function from $I$-function.

Let $X$ be the Fermat quintic, considered as a degree 5 hypersurface in $\mathbb{P}^4$, the cohomological $I$-function of $X$ is as follows
\begin{gather*}
I^{\rm coh}_{X}(\hbar,{\rm e}^t) = \sum_{d=0}^{\infty} \frac{\prod_{k=1}^{5d}(5H+k\hbar)}{\prod_{k=1}^{d}(H+k\hbar)^5} {\rm e}^{t\left(\frac{H}{\hbar}+d\right)}, 	
\end{gather*}
where $H$ is the hyperplane class of $\mathbb{P}^4$, and $\hbar$ is the equivariant parameter, since $H^4=0$ in the cohomology of $X$, the $I$-function of quintic satisfies the following degree 4 differential equation which is called the Picard--Fuchs equation
\begin{gather*}
\bigg[ \bigg(\hbar\frac{\rm d}{\rm dt}\bigg)^4\!\!-5^5 {\rm e}^t \bigg(\hbar\frac{\rm d}{\rm dt}+\frac{1}{5}\hbar\bigg)\bigg(\hbar\frac{\rm d}{\rm dt}+\frac{2}{5}\hbar\bigg) \bigg(\hbar\frac{\rm d}{\rm dt}+\frac{3}{5}\hbar\bigg)\bigg(\hbar\frac{\rm d}{\rm dt}+\frac{4}{5}\hbar\bigg) \bigg] I^{\rm coh}_X\big(\hbar,{\rm e}^t\big) = 0.	
\end{gather*}

Let $Q=5^5{\rm e}^t$, the above differential equation becomes the following form
\begin{gather}
\bigg[ \bigg(Q\frac{\rm d}{{\rm d}Q}\bigg)^4\!\!- Q\bigg(Q\frac{\rm d}{{\rm d}Q}+\frac{1}{5}\bigg)\bigg(Q\frac{\rm d}{{\rm d}Q}+\frac{2}{5}\bigg) \bigg(Q\frac{\rm d}{{\rm d}Q}+\frac{3}{5}\bigg)\bigg(Q\frac{\rm d}{{\rm d}Q}+\frac{4}{5}\bigg) \bigg] I^{\rm coh}_X(\hbar,Q) = 0.	
\label{differential-equation}
\end{gather}
The fundamental solutions at $Q=0$ are given by the expansion of $I^{\rm coh}_X(\hbar,Q)$, i.e.,
\begin{gather*}
I^{\rm coh}_X(\hbar,Q)= I_0 \cdot 1 + I_1 \cdot H + I_2 \cdot H^2 + I_3 \cdot H^3 \mod\big(H^4\big).
\end{gather*}
More preciously, the coefficients of the $I$-function relative to the cohomology basis give the fundamental solutions of (\ref{differential-equation}). Moreover, the fundamental solutions at $Q=\infty$ are related to FJRW theory \cite{CR13} and could be constructed explicitly via the Frobenius method.

Around 2000, Givental \cite{givental2000wdvv} and Lee \cite{lee2004quantum} introduced the $K$-theoretic Gromov–Witten (GW) invariants, these invariants are defined by replacing cohomological definitions by their $K$-theore\-ti\-cal analogs. The $K$-theoretic $J$-function and $I$-function are also defined and studied; unlike cohomological GW theory, the $K$-theoretic $I$-function satisfies a difference equation instead of a~differential equation. For example, let us still consider quintic $X$, denote by $q^{Q\partial_Q}$ the difference operator shifting $Q^k$ by $q^kQ^k$, the $K$-theoretic $I$-function of $X$ is as follows
\begin{gather}
I^K_X(q,Q)= P^{l_q(Q)}\sum_{d=0}^{\infty} \frac{\prod_{k=1}^{5d}\big(1-P^5q^k\big)}{\prod_{k=1}^{d}\big(1-Pq^k\big)^5} Q^d, \label{K-I-function}
\end{gather}
where $P=\mathcal{O}(-1)$ on $\mathbb{P}^4$, and
\begin{gather*}
l_q(Q) = -Q\frac{\theta_q^{\prime}(Q)}{\theta_q(Q)}
\end{gather*}
is the $q$-logarithm function. Here $\theta_q(Q)$ is the Jacobi's theta function and the $q$-logarithm function satisfies
\begin{gather*}
q^{Q\partial_Q}(l_q(Q)) = l_q(Q)+1. 	
\end{gather*}
Since $(1-P)^5=0$ in $K\big(\mathbb{P}^4\big)$, the $K$-theoretic $I$-function of $X$ satisfies the following difference equation
\begin{gather}
\Bigg[	\big(1-q^{Q\partial_Q}\big)^5 - Q \prod_{k=1}^5 \big(1-q^{5Q\partial_Q+k}\big) \Bigg]I^K_X(q,Q)=0. \label{difference-equation}
\end{gather}
It is a degree 25 difference equation but not fuchsian (Definition \ref{def-fuchsian}).

The characteristic equation (see (\ref{char-equ}) for definition) at $Q=\infty$ is
\begin{gather*}
\big(1-q^{-1}x^5\big)\big(1-q^{-2}x^5\big)\big(1-q^{-3}x^5\big)\big(1-q^{-4}x^5\big)\big(1-q^{-5}x^5\big)=0,	
\end{gather*}
with 25 distinct roots. Using Frobenius method, we obtain 25 solutions (at $Q=\infty$) of (\ref{difference-equation}) given by
\begin{gather*}
	W_{l,m}(1/Q) = e_{q,q^{\frac{l}{5}}\xi^m}(1/Q) \sum_{d\geq0}\frac{\prod_{k=0}^{d-1}\big(1-\xi^{-m}q^{-k-\frac{l}{5}}\big)^5}{\prod_{k=0}^{5d-1}\big(1-q^{-k-l}\big)}Q^{-d},
\end{gather*}
where
\begin{gather*}
\xi^5=1, \qquad l=1,\dots,5, \qquad	m=0,\dots,4,
\end{gather*}
and
\begin{gather*}
e_{q, \lambda_{q}}(Q)=\frac{\theta_{q}(Q)}{\theta_{q}(\lambda_{q} Q)} \in \mathcal{M}(\mathbb{C}^{*}).	
\end{gather*}
The function $e_{q,\lambda_q}$ satisfies the	 $q$-difference equation $q^{Q \partial_{Q}} e_{q, \lambda_{q}}(Q)=\lambda_{q} e_{q, \lambda_{q}}(Q)$.

The characteristic equation at $Q=0$ is
\begin{gather*}
(1-x)^5=0,	
\end{gather*}
with 5 multiple roots. One may obtain explicit formulas for solutions via the Frobenius method. However, we could obtain 5 solutions at $Q=0$ from $I^K_X(q, Q)$ as mentioned above. We use Adams' method to obtain the rest solutions.

\begin{Proposition}
The exceptional $20$ solutions of \eqref{difference-equation} are given as follows: for each $20$th root of unity $\xi$, we have a solution of form
\begin{gather*}
	e_{p,\xi p^{-1/2}}(z)F(z) = e_{p,\xi p^{-1/2}}(z)\sum_{n \geq 0}f_nz^n,
\end{gather*}
where $z=Q^{\frac{1}{20}}$, $p=q^{\frac{1}{20}}$. Let $\sigma_p=p^{z\partial_z}$, then $F(z)$ satisfies
\begin{gather*}
\Big[\big(z-\xi p^{-\frac{9}{2}}\sigma_p\big)\big(z-\xi p^{-\frac{7}{2}}\sigma_p\big) \big(z-\xi p^{-\frac{5}{2}}\sigma_p\big)\big(z-\xi p^{-\frac{3}{2}}\sigma_p\big)\big(z-\xi p^{-\frac{1}{2}} \sigma_p\big)
\\ \qquad
{}-\! \big(z^5\!-\!\xi^5p^{-\frac{25}{2}}\sigma_p^5\big)\big(z^5\!-\!\xi^5 p^{-\frac{15}{2}}\sigma_p^5\big)\big(z^5\!-\!\xi^5p^{-\frac{5}{2}}\sigma_p^5\big) \! \big(z^5\!-\!\xi^5p^{\frac{5}{2}}\sigma_p^5\big)\big(z^5\!-\!\xi^5p^{\frac{15}{2}}\sigma_p^5\big) \Big] F(z)\!=\!0.\!
\end{gather*}
\end{Proposition}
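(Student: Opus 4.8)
The plan is to apply Adams' method to the degenerate characteristic root at $Q=0$ and to read off the reduced equation for the power-series factor $F$ after a ramified change of variable. I would begin by setting $z=Q^{1/20}$ and $p=q^{1/20}$; since $Q\partial_Q=\tfrac1{20}z\partial_z$, the shift operator becomes $q^{Q\partial_Q}=p^{z\partial_z}=\sigma_p$, and correspondingly $q^{5Q\partial_Q+k}=p^{20k}\sigma_p^5$ and $Q=z^{20}$. Substituting these into \eqref{difference-equation} rewrites the degree-$25$ operator as
\[
(1-\sigma_p)^5-z^{20}\prod_{k=1}^5\bigl(1-p^{20k}\sigma_p^5\bigr).
\]
The characteristic equation at $Q=0$ being $(1-x)^5=0$, the root $x=1$ has multiplicity $5$; the remaining $20$ solutions sit on the segment of the Newton polygon joining the $\sigma_p^5$-term of $(1-\sigma_p)^5$ to the top term $z^{20}\sigma_p^{25}$, and the point of the ramification is precisely to unfold this degeneracy into an integer slope so that Adams' construction applies.

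Next I would introduce the theta-function gauge. The $20$ solutions attached to that segment are indexed by the roots of its characteristic equation, which I would identify as $\lambda=\xi p^{-1/2}$ with $\xi$ ranging over the $20$th roots of unity; this fixes the ansatz $I=e_{p,\xi p^{-1/2}}(z)F(z)$. Using the defining relation $\sigma_p e_{p,\lambda}=\lambda e_{p,\lambda}$ to pull the factor $e_{p,\xi p^{-1/2}}(z)$ through the operator, together with the commutation rule $\sigma_p\,z=p\,z\,\sigma_p$ to reorder the monomials, I would reduce the equation $I$ solves to an equation for $F$ alone.

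The crux of the argument, and the step I expect to be the main obstacle, is to exhibit this reduced operator in the factored form of the statement. Adams' method amounts to unfolding the fivefold root $\sigma_p=1$ into the distinct linear factors $\prod_{k=1}^5\bigl(z-\xi p^{-(2k-1)/2}\sigma_p\bigr)$, whose exponents are equally spaced because each interchange of $\sigma_p$ and $z$ produces a power of $p$; the companion term unfolds into $\prod_{k=1}^5\bigl(z^5-\xi^5p^{a_k}\sigma_p^5\bigr)$ with $a_k\in\{-\tfrac{25}2,-\tfrac{15}2,-\tfrac52,\tfrac52,\tfrac{15}2\}$. The delicate part is the bookkeeping of the accumulated powers of $p$ and the verification of these exact exponents; I would carry it out by expanding both the target operator and the reduced operator as $\sum_j b_j(z)\sigma_p^j$ and matching the coefficient functions $b_j(z)$.

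Finally I would check that the factored operator genuinely governs a power series, which is what makes the ansatz consistent. A useful structural point is that the two products of roots coincide, $\prod_{k=1}^5\xi p^{-(2k-1)/2}=\prod_{k=1}^5\xi^5p^{a_k}=\xi^5p^{-25/2}$; consequently the coefficient of $f_m$ in the recursion coming from $F=\sum_{n\ge0}f_nz^n$ is proportional to $p^{5m}\bigl(p^{20m}-1\bigr)=p^{5m}(q^m-1)$, which vanishes only at $m=0$. Hence the recursion determines $f_1,f_2,\dots$ uniquely from $f_0$, so $F$ is a well-defined power series, and the $20$ choices of $\xi$ produce exactly the $20$ exceptional solutions $e_{p,\xi p^{-1/2}}(z)F(z)$.
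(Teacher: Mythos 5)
Your ramification step is correct ($z=Q^{1/20}$, $p=q^{1/20}$, $\sigma_p=p^{z\partial_z}$ turns \eqref{difference-equation} into $\big[(1-\sigma_p)^5-z^{20}\prod_{k=1}^5\big(1-p^{20k}\sigma_p^5\big)\big]I=0$), and your closing recursion observation (coefficient of $f_m$ proportional to $p^{5m}(q^m-1)$, so only $f_0$ is free) agrees with the paper's expansion \eqref{expansion-20}. But there is a genuine gap at the step you yourself flagged as the crux: you never introduce the theta-function prefactor that Adams' method attaches to the non-horizontal segment of the Newton polygon. The paper first seeks solutions of the form $\theta_{q^{1/20}}\big(Q^{1/20}\big)G\big(Q^{1/20}\big)$; since $\sigma_p^j\,\theta_p(z)=p^{-j(j-1)/2}z^{-j}\theta_p(z)\,\sigma_p^j$, this gauge puts a factor $z^{-j}$ on each $\sigma_p^j$, and after clearing $z^5$ it yields exactly equation \eqref{deg-20-difference-equation}. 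It is \emph{that} equation whose characteristic equation $p^{-10}x^5=x^{25}$ has the $20$ simple roots $\xi p^{-1/2}$, and only after this gauge does the further substitution $\sigma_p\mapsto\xi p^{-1/2}\sigma_p$ (i.e., pulling $e_{p,\xi p^{-1/2}}$ through) produce the factored operator in the statement. Note that even by the paper's own description of Adams' technique, the segment you identify has slope $1$ in the ramified variable, which mandates the prefactor $\theta_p(z)$; ramification alone does not "unfold the degeneracy."

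Without the theta factor, both hinges of your plan fail. First, applying the ansatz $I=e_{p,\xi p^{-1/2}}(z)F(z)$ directly to the ramified equation gives $\big[(1-\lambda\sigma_p)^5-z^{20}\prod_{k=1}^5\big(1-p^{20k}\lambda^5\sigma_p^5\big)\big]F=0$ with $\lambda=\xi p^{-1/2}$; setting $z=0$ forces $(1-\lambda)^5f_0=0$, and since $\lambda\neq1$ this gives $f_0=0$ and hence $F\equiv0$. Indeed the characteristic equation of the unramified-gauge equation is still $(1-x)^5=0$, so the roots $\xi p^{-1/2}$ you want to use are simply not its characteristic roots. Second, the coefficient matching you propose cannot succeed: your reduced operator has $\sigma_p^0$-coefficient $1-z^{20}$ and $\sigma_p^1$-coefficient $-5\xi p^{-1/2}$, whereas the operator in the statement has $\sigma_p^0$-coefficient $z^5-z^{25}$ and $\sigma_p^1$-coefficient $-5\xi p^{-1/2}z^4$ (see \eqref{expansion-20}); the discrepancy is not an overall factor but precisely the $z^{-j}$-per-$\sigma_p^j$ twist created by $\theta_p$. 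The repair is to insert the prefactor $\theta_{q^{1/20}}\big(Q^{1/20}\big)$, so that the exceptional solutions of \eqref{difference-equation} are $\theta_p(z)e_{p,\xi p^{-1/2}}(z)F(z)$ while $e_{p,\xi p^{-1/2}}(z)F(z)$ solves \eqref{deg-20-difference-equation} (this is how the paper states the result in Section~\ref{sec3}); with that inserted, your $e$-gauge computation and your recursion argument go through as written.
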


To the best of the author's knowledge, the connection matrix is critical in classifying fuchsian difference equations, and very little is known about the non-fuchsian case. Even in the fuchsian case, the connection matrix is hard to obtain if the characteristic equation has multiple roots.

We construct the following $K\big(\mathbb{P}^{n-1}\big)$ valued $q$-series motived by \cite{CR13} to obtain connection matrix
\begin{gather}
F_{m,n}(Q)=P^{l_q(Q)}\sum_{d=0}^{\infty} \frac{\prod_{i=1}^{m}(P{\alpha_i};q)_d}{(Pq;q)_d^n} Q^d. \label{aux-F_{m,n}}
\end{gather}
Here we use $q$-Pochhammer symbol notation:
\begin{gather*}
(a;q)_d : = (1-a)(1-qa)\cdots\big(1-q^{d-1}a\big) \qquad \text{for} \quad d>0.
\end{gather*}
Since $(1-P)^n=0$ in $K\big(\mathbb{P}^{n-1}\big)$, then (\ref{aux-F_{m,n}}) satisfies the following difference equation
\begin{gather}
	\Bigg[\big(1-q^{Q\partial_Q}\big)^n-Q\prod_{i=1}^{m}\big(1-{\alpha_i}q^{Q\partial_Q}\big) \Bigg] F_{m,n}(Q)=0 \mod\big((1-P)^n\big). \label{dif-F_{m,n}}
\end{gather}
Suppose $\alpha_i \notin \alpha_j q^{\mathbb{Z}\backslash \{0\}}$ then we could find the explicit formula for $m$ solutions at $Q=\infty$ denoted by $\{ W_k(1/Q) \}_{k=1}^m$ and we use Mellin--Barnes--Watson method to related solutions at~$Q=0$ and~$Q=\infty$.

\begin{Theorem}\label{Theorem1.1}
For $m \geq n$, the $K\big(\mathbb{P}^{n-1}\big)$ valued $q$-series has the following analytic continuation:
\begin{gather*}
P^{l_q(Q)}\sum_{d=0}^{\infty} \frac{\prod_{i=1}^{m}(P{\alpha_i};q)_d}{(Pq;q)_d^n} Q^d
= P^{l_q(Q)} \frac{\prod_{i=1}^m(P\alpha_i;q)_\infty}{(Pq;q)_{\infty}^n}\sum_{j=1}^m \frac{(q,q,P\alpha_j Q,q/(P\alpha_j Q);q)_\infty}{(P\alpha_j, q/(P\alpha_j),Q,q/Q;q )_{\infty}}
\\ \hphantom{P^{l_q(Q)}\sum_{d=0}^{\infty} \frac{\prod_{i=1}^{m}(P{\alpha_i};q)_d}{(Pq;q)_d^n} Q^d=}
{}\times \frac{\big(\alpha_j^{-1}q;q\big)^n_\infty {\rm e}^{-1}_{q,\alpha_j}(1/Q)}{\prod_{i=1,i\neq j}^m(\alpha_i/\alpha_j;q)_\infty (q;q)_\infty} W_j(1/Q).
\end{gather*}
\end{Theorem}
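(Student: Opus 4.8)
The plan is to establish the identity through a Mellin--Barnes--Watson contour integral for the $q$-series, followed by a contour deformation that trades the expansion at $Q=0$ for the expansion at $Q=\infty$; the stated equality is then an identity of meromorphic functions obtained by analytic continuation.

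\textbf{Step 1 (normalization).} First I would strip off the $d$-independent prefactor. Applying $(a;q)_d=(a;q)_\infty/\big(aq^d;q\big)_\infty$ to each factor, the summand becomes
\begin{gather*}
\frac{\prod_{i=1}^m(P\alpha_i;q)_d}{(Pq;q)_d^n}Q^d=\frac{\prod_{i=1}^m(P\alpha_i;q)_\infty}{(Pq;q)_\infty^n}\cdot\frac{\big(Pq^{1+d};q\big)_\infty^n}{\prod_{i=1}^m\big(P\alpha_iq^{d};q\big)_\infty}Q^d,
\end{gather*}
so the constant $\prod_{i}(P\alpha_i;q)_\infty/(Pq;q)_\infty^n$ is exactly the one appearing on the right-hand side, and what remains is a sum of ratios of infinite products evaluated at $q^d$.

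\textbf{Step 2 (integral representation).} The heart of the argument is to represent the remaining sum as a contour integral
\begin{gather*}
\sum_{d\geq0}\frac{\big(Pq^{1+d};q\big)_\infty^n}{\prod_{i=1}^m\big(P\alpha_iq^{d};q\big)_\infty}Q^d=\frac{1}{2\pi i}\int_{\mathcal C}\frac{\big(Pq^{1+s};q\big)_\infty^n}{\prod_{i=1}^m\big(P\alpha_iq^{s};q\big)_\infty}\,K(s)\,{\rm d}s,
\end{gather*}
where $K(s)$ is a $\theta_q$-kernel carrying the $Q^s$ factor and having simple poles exactly at $s\in\mathbb Z_{\geq0}$, normalized so that the residue at $s=d$ reproduces the summand $Q^d$. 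Using $\theta_q$ in place of the classical $\pi/\sin\pi s$ keeps the $q$-periodicity transparent, and it is the zeros of $\theta_q(Q)$ and of $\theta_q(P\alpha_jQ)$ that, through Jacobi's triple product, will ultimately produce the factors $(Q,q/Q;q)_\infty$ and $(P\alpha_jQ,q/(P\alpha_jQ);q)_\infty$. Closing $\mathcal C$ to the right encloses the poles at $s\in\mathbb Z_{\geq0}$ and recovers the original series for small $|Q|$, which verifies the representation.

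\textbf{Step 3 (deformation and residues).} I would then deform $\mathcal C$ to the left and collect the second family of poles, coming from the zeros of the denominator factors $\big(P\alpha_jq^s;q\big)_\infty$, located where $q^s=(P\alpha_j)^{-1}q^{-k}$ with $k\geq0$, for each $j=1,\dots,m$. The hypothesis $\alpha_i\notin\alpha_jq^{\mathbb Z\setminus\{0\}}$ makes these towers simple and mutually disjoint, so each resums into a series in $1/Q$. For fixed $j$ the $q^{-ks}$ shifts assemble the prefactor $e_{q,\alpha_j}^{-1}(1/Q)$ (recall $q^{Q\partial_Q}e_{q,\lambda_q}(Q)=\lambda_q e_{q,\lambda_q}(Q)$), the convergent part of the tower is precisely the Frobenius solution $W_j(1/Q)$ of (\ref{dif-F_{m,n}}), and the residue of the theta kernel supplies the product $\frac{(q,q,P\alpha_jQ,q/(P\alpha_jQ);q)_\infty}{(P\alpha_j,q/(P\alpha_j),Q,q/Q;q)_\infty}$ together with the constant $\frac{(\alpha_j^{-1}q;q)_\infty^n}{\prod_{i\neq j}(\alpha_i/\alpha_j;q)_\infty(q;q)_\infty}$. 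Restoring the Step~1 prefactor and the global $P^{l_q(Q)}$ then yields the claimed formula.

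\textbf{Main obstacle.} The decisive point is justifying the contour deformation: I must bound the theta-quotient kernel along vertical lines and show that the connecting arcs contribute nothing. This is exactly where $m\geq n$ enters — along the leftward contour $q^s$ grows and the ratio behaves like $n$ numerator products over $m$ denominator products, so $m\geq n$ forces the denominator to dominate, making the arcs vanish and the $\alpha_j$-towers converge. The secondary difficulty is the theta bookkeeping in Step~3, where repeated use of the quasi-periodicity of $\theta_q$ and of Jacobi's triple product is needed to pin down the precise constants. Finally, the $K\big(\mathbb P^{n-1}\big)$-valued statement follows by performing the whole computation for generic complex $P$, where the series is an honest basic hypergeometric function, and then reducing modulo $(1-P)^n$; since $q^{Q\partial_Q}P^{l_q(Q)}=P\,P^{l_q(Q)}$, both sides transform identically under the shift, so the functional identity descends to the $K$-theoretic one.
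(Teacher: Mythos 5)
Your overall architecture coincides with the paper's: both arguments run the Mellin--Barnes--Watson method, representing the series as a contour integral, closing the contour to the right for $|Q|<1$ to recover the series, and closing it to the left for $|Q|$ large to produce the $W_j(1/Q)$, with $m\geq n$ controlling the asymptotics that justify the left closure. However, there are two genuine gaps. First, the kernel you posit in Step~2 cannot exist: any kernel built from $\theta_q$ evaluated at $q^s$ has poles along complete towers $s_0+\mathbb{Z}$ (modulo the imaginary period $2\pi{\rm i}/w$, where $q={\rm e}^{-w}$), because the zero set of $\theta_q$ is invariant under multiplication by $q$; and kernels whose poles sit only on $\mathbb{Z}_{\geq 0}$, such as $1/(q^{-s};q)_\infty$, have residues carrying Gaussian factors $q^{d(d+1)/2}$, so they represent a $q$-Borel transform of the series rather than the series itself. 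The paper instead takes the classical Watson kernel $\pi(-Q)^s/\sin\pi s$, whose poles are at \emph{all} integers, and the theta quotient $(q,q,P\alpha_jQ,q/(P\alpha_jQ);q)_\infty/(P\alpha_j,q/(P\alpha_j),Q,q/Q;q)_\infty$ that you attribute to a kernel residue in fact arises there from summing residues over the tower $l\in\mathbb{Z}$ of poles displaced in the imaginary direction, via \cite[equation~(4.3.9)]{GR04}.

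Second, and decisively: since the kernel necessarily has poles at the negative integers, closing the contour to the left picks up a second family of residues, at $s=-1-h$ with $h\in\mathbb{Z}_{\geq0}$, in addition to the $\alpha_j$-towers, and your Step~3 ignores it. These residues do not vanish for generic scalar $P$: each contains the factor $\big(Pq^{s+1};q\big)_\infty^n=\big(Pq^{-h};q\big)_\infty^n$, which is divisible by $(1-P)^n$ but nonzero for generic complex $P$. Consequently your closing claim --- that the identity holds as an honest function identity for generic complex $P$ and then descends modulo $(1-P)^n$ --- is backwards: for generic $P$ the correct analytic continuation contains these extra terms, and the stated formula without them is false; it is precisely the reduction in $K\big(\mathbb{P}^{n-1}\big)$, where $(1-P)^n=0$, that kills them. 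This is why the theorem is a $K\big(\mathbb{P}^{n-1}\big)$-valued statement rather than a scalar one, and it is the one place where the $K$-theoretic structure genuinely enters the proof; your proposal is missing exactly this step.
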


As for applications, if we take $n=5$, $m=25$ and $\{ \alpha_i \}_{i=1}^{25}=\big\{ \xi^l q^{\frac{k}{5}} \mid k,l=1,2,3,4,5 \big\}$, then~(\ref{aux-F_{m,n}}) becomes (\ref{K-I-function}). And if we take $n=4$, $m=4$ and $\{ \alpha_i\}_{i=1}^4=\big\{q^{\frac{i}{5}} \big\}_{i=1}^4$, then (\ref{dif-F_{m,n}}) becomes
\begin{gather}
	\Bigg[ \big(1-q^{Q\partial_Q}\big)^4-Q\prod_{i=1}^{4}\big(1-q^{\frac{i}{5}}q^{Q\partial_Q}\big) \Bigg] F(Q) = 0. \label{intro-difference}
\end{gather}
This difference equation is a lift of the differential equation (\ref{differential-equation}), i.e., if we let $q \rightarrow 1$, then~(\ref{intro-difference}) becomes~(\ref{differential-equation}). This phenomenon is called \emph{confluence} which was studied first by J.~Sauloy in~2000~\cite{Sauloy00}. Under the above specific choice, the formula in Theorem~\ref{Theorem1.1} becomes
\begin{gather*}
 P^{l_q(Q)}\sum_{d=0}^{\infty} \frac{\prod_{i=1}^{4}\big(Pq^{\frac{i}{5}};q\big)_d}{(Pq;q)_d^4} Q^d
= P^{l_q(Q)} \frac{\prod_{i=1}^4\big(Pq^{\frac{i}{5}};q\big)_\infty}{(Pq;q)_{\infty}^4} \sum_{j=1}^4 \frac{\big(q,q,Pq^{\frac{j}{5}} Q,q/\big(Pq^{\frac{j}{5}} Q\big);q\big)_\infty}{\big(Pq^{\frac{j}{5}}, q/\big(Pq^{\frac{j}{5}}\big),Q,q/Q;q \big)_{\infty}}
\\ \hphantom{P^{l_q(Q)}\sum_{d=0}^{\infty}\frac{\prod_{i=1}^{4}\big(Pq^{\frac{i}{5}};q\big)_d}{(Pq;q)_d^4} Q^d=}
{}\times\frac{\big(q^{-\frac{j}{5}}q;q\big)^4_\infty {\rm e}^{-1}_{q,q^{j/5}}(1/Q)}{\prod_{i=1,i\neq j}^4\big(q^{\frac{i-j}{5}};q\big)_\infty (q;q)_\infty } W_j(1/Q),
\end{gather*}
where $\{ W_j(1/Q) \}_{j=1}^4 $ are the fundamental solutions at $Q=\infty$. If we expand two sides with respect to $K$-group basis $(1-P)^k$, $k=0,1,2,3$, we obtain the connection matrix, for more details, see Section~\ref{section5.1}. Besides, the fundamental solutions of (\ref{intro-difference}) at $Q=0$ and $Q=\infty$ are confluent to the solutions of~(\ref{differential-equation}), finally, we compute the confluence of the connection matrix.

The paper is arranged as follows. Section~\ref{sec2} reviews some basic definitions and concepts of difference equations and introduces some special functions. In Section~\ref{sec3}, we use the difference equation of quintic as an example, and we use Adams' method and Frobenius method to solve the degree 25 difference equation at $Q=0$ and $Q=\infty$ respectively. In Section~\ref{sec4}, we generalize the difference equation for the quintic and construct a $K$-group valued series, and then we use the Mellin--Barnes--Watson method to relate solutions at $Q=0$ and $Q=\infty$. In Section~\ref{sec5}, we apply the results in Section~\ref{sec4} to a particular fuchsian case, and we expand the formula with respect to the $K$-group basis $(1-P)^k$ to find the connection matrix. Since the particular fuchsian case is confluent to the differential equation of quintic. In Section~\ref{sec6}, we study the confluence of the connection matrix.

\section{Preliminaries}\label{sec2}
In this section, we define some basic notions in the theory of $q$-difference equations. The main references are \cite{Ro19, Sauloy00,hardouin:hal-01959879}.

\begin{Notations} Here are some standard notations of general use:
\begin{itemize}\itemsep=0pt
\item[--] $Q$ and $q$ are complex variables and $|q|<1$, $q \neq 0$,
\item[--] $\mathbb{C}(\{Q \})$ is the field of meromorphic germs at 0, is the quotient field of $\mathbb{C} \{ Q \}$,
\item[--] $\mathcal{M}(\mathbb{C}^*)$ is the field of meromorphic functions on $\mathbb{C}^*$,
\item[--] $\mathcal{M}(\mathbb{C}^*,0)$ is the ring of germs at punctured neighborhood of $Q=0$,
\item[--] $\mathcal{M}\left(\mathbb{E}_{q}\right)$ is the field of meromorphic functions on elliptic curve $\mathbb{E}_q=\mathbb{C}^*/q^\mathbb{Z}$, i.e, the field of elliptic functions.
\item[--] $(a;q)_d=(1-a)(1-qa)\cdots\big(1-q^{d-1}a\big)$ for $d \in \mathbb{N} \cup \{+\infty \} $ is the $q$-Pochhammer symbol.
\end{itemize}
\end{Notations}

\begin{Definition}
A difference field is a pair $(K,{\sigma})$, where $K$ is a field, and $\sigma$ is a field automorphism of $K$.	
\end{Definition}

\begin{Example}
We will focus on the fields in the above notations,
\begin{gather*}
 \mathcal{M}(\mathbb{C}^*) \subset \mathcal{M}(\mathbb{C}^*,0),
\end{gather*}
they are all endowed with the $q$-shift operator $\sigma_q := q^{Q \partial_{Q}}\colon f(Q) \mapsto f(qQ)$. Let $K=\mathcal{M}(\mathbb{C}^*)$ or~$\mathcal{M}(\mathbb{C}^*,0)$. Usually, we denote the field of constants of the difference field $(K, \sigma_q)$ as $K^{\sigma_q}$. For example, $\mathcal{M}(\mathbb{C}^*)^{\sigma_q}=\mathcal{M}(\mathbb{C}^*)^{\sigma_q} =\mathcal{M}(\mathbb{E}_{q})$. This is the main reason that the modular form such as elliptic function appears naturally in the theory of $q$-difference equation.
\end{Example}

\subsection[Regular singular q-difference equations]
{Regular singular $\boldsymbol q$-difference equations}
\begin{Definition}
Let $(E_{q})\colon q^{Q \partial_{Q}} X_{q}(Q)=A_{q}(Q) X_{q}(Q)$ be a $q$-difference system, with $A_{q} \in {\rm GL}_{n}(K)$. We define the solution space of this $q$-difference equation by
\begin{gather*}
\operatorname{Sol}(E_{q})=\big\{X_{q} \in K^{n} \mid q^{Q \partial_{Q}} X_{q}(Q)=A_{q}(Q) X_{q}(Q)\big\}.
\end{gather*}	
\end{Definition}

\begin{Remark}
From now on, we will focus on the local solutions at $Q=0$, and the results will also hold for $Q=\infty$. The reason why we don't consider solutions at other singular points is that: if a function $f(Q)$ is a solution of a~$q$-difference equation $q^{Q\partial_Q}f(Q)=a(Q)f(Q)$ and has a singularity at some $Q_0 \neq 0,\infty$, then $f(Q)$ has a singularity at any complex number~$Q_0q^k$.	
\end{Remark}

\begin{Proposition}[{\cite[Theorem 2.3.1, p.~118]{hardouin:hal-01959879}}]
Let $(E_{q})\colon q^{Q \partial_{Q}} X_{q}(Q)=A_{q}(Q) X_{q}(Q)$ be a~$q$-dif\-ference system. Then, we have
\begin{gather*}
\operatorname{dim}_{\mathcal{M}(\mathbb{E}_{q})}\big(\operatorname{Sol}(E_{q})\big) \leq \operatorname{rank}(A_{q}).
\end{gather*}
\end{Proposition}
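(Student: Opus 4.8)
The plan is to realize $\operatorname{Sol}(E_q)$ as a vector space over the constant field $C := K^{\sigma_q}$ (which coincides with $\mathcal{M}(\mathbb{E}_q)$ for the fields $K$ in play), and then to bound its $C$-dimension by converting any $C$-independent family of solutions into an equally large $K$-independent family that lands inside $\operatorname{Im}(A_q)$. Write $r=\operatorname{rank}(A_q)=\dim_K\operatorname{Im}(A_q)$. Note that $\operatorname{Sol}(E_q)$ is indeed a $C$-vector space: if $\sigma_q X=A_q X$ and $c\in C$, then $\sigma_q(cX)=\sigma_q(c)\sigma_q(X)=c\,A_qX=A_q(cX)$, so scalars from $C$ preserve the equation. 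I would first record the elementary containment that is the source of the rank bound: for any solution $X$ one has $\sigma_q(X)=A_qX\in\operatorname{Im}(A_q)$, so applying $\sigma_q$ carries $\operatorname{Sol}(E_q)$ into the $r$-dimensional $K$-subspace $\operatorname{Im}(A_q)\subseteq K^n$.

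The heart of the argument is the independence lemma: solutions $X_1,\dots,X_s\in\operatorname{Sol}(E_q)$ that are linearly independent over $C$ are linearly independent over $K$. This is where one must \emph{not} merely imitate the invertible case, since $A_q$ may be singular and cannot be cancelled from a relation. Instead, suppose the $X_i$ were $K$-dependent and choose a nontrivial relation $\sum_{i=1}^{t}c_iX_i=0$ with $t$ minimal and $c_t=1$. Applying the field automorphism $\sigma_q$ componentwise yields $\sum_{i=1}^{t}\sigma_q(c_i)\sigma_q(X_i)=0$, while applying the matrix $A_q$ and using $A_qX_i=\sigma_q(X_i)$ yields $\sum_{i=1}^{t}c_i\sigma_q(X_i)=0$. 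Subtracting, the top term cancels because $\sigma_q(c_t)=c_t=1$, leaving $\sum_{i=1}^{t-1}\bigl(\sigma_q(c_i)-c_i\bigr)\sigma_q(X_i)=0$. Now applying $\sigma_q^{-1}$, which is legitimate precisely because $\sigma_q$ is an automorphism (whereas $A_q$ need not be invertible), converts this into the strictly shorter relation $\sum_{i=1}^{t-1}\bigl(c_i-\sigma_q^{-1}(c_i)\bigr)X_i=0$ among $X_1,\dots,X_{t-1}$. Minimality of $t$ forces all its coefficients to vanish, so $\sigma_q(c_i)=c_i$, i.e.\ $c_i\in C$ for every $i$; together with $c_t=1\in C$ this makes the original relation a nontrivial $C$-relation, contradicting $C$-independence.

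Finally I would combine the two ingredients. Given a $C$-independent family $X_1,\dots,X_s\in\operatorname{Sol}(E_q)$, the lemma makes it $K$-independent; since $\sigma_q$ is a bijective semilinear map of $K^n$ it preserves $K$-independence, so the vectors $\sigma_q(X_i)=A_qX_i$ are $K$-independent and, by the containment above, lie in $\operatorname{Im}(A_q)$. A set of $K$-independent vectors in an $r$-dimensional $K$-space has at most $r$ elements, hence $s\le r$. As this holds for every $C$-independent family, we conclude $\dim_{\mathcal{M}(\mathbb{E}_q)}\operatorname{Sol}(E_q)=\dim_C\operatorname{Sol}(E_q)\le\operatorname{rank}(A_q)$.

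I expect the main obstacle to be exactly the independence lemma in the singular case: the naive cancellation of $A_q$ is unavailable, and the fix is to produce the descending relation by playing the automorphism $\sigma_q$ (together with its inverse) against the endomorphism $A_q$. Once that $\sigma_q^{-1}$ step is in place the remaining bookkeeping—semilinearity of $\sigma_q$ preserving independence and the dimension count inside $\operatorname{Im}(A_q)$—is routine.
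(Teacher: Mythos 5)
Your proof is correct, and it is essentially the classical argument behind this statement: the paper itself offers no proof, deferring to Sauloy's Theorem~2.3.1, whose proof runs through exactly the minimal-relation (difference-Wronskian) lemma you establish --- a shortest nontrivial $K$-relation among solutions is pushed down to a strictly shorter one, forcing its coefficients into the constant field $\mathcal{M}(\mathbb{E}_q)$ and showing $C$-independence implies $K$-independence. The one place you genuinely add something is your handling of the case where $A_q$ is not invertible: instead of cancelling $A_q$ from the shifted relation (the usual move when $A_q\in{\rm GL}_n(K)$, which yields only the bound $n$), you subtract the $\sigma_q$-shifted relation from the $A_q$-applied one and then invert the automorphism $\sigma_q$ rather than the matrix, and you conclude by mapping the $K$-independent solutions into $\operatorname{Im}(A_q)$ via $X\mapsto\sigma_q(X)=A_qX$ (semilinearity of $\sigma_q$ preserving independence). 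This is precisely what the statement's $\operatorname{rank}(A_q)$ formulation requires --- note that in the paper's own setting $A_q\in{\rm GL}_n(K)$, so $\operatorname{rank}(A_q)=n$ and the refinement is vacuous there, but your argument proves the stated inequality in full generality. All the supporting steps (that $\operatorname{Sol}(E_q)$ is a $C$-vector space, that $\sigma_q^{\pm1}$ act on the relevant germ fields, the dimension count inside the column space) check out.
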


\begin{Definition}
Let $q^{Q \partial_{Q}} X_{q}(Q)=A_{q}(Q) X_{q}(Q)$ be a $q$-difference system. A fundamental solu\-tion of this system is an invertible matricial solution $\mathcal{X}_{q} \in {\rm GL}_{n}(K)$ such that $q^{Q \partial_{Q}} \mathcal{X}_{q}(Q)=A_{q}(Q) \mathcal{X}_{q}(Q)$.	
\end{Definition}

\begin{Definition}
Let $q^{Q \partial_{Q}} X_{q}(Q)=A_{q}(Q) X_{q}(Q)$ be a $q$-difference system. Consider a matrix $P_{q} \in {\rm GL}_{n}(K)$. The gauge transform of the matrix $A_{q}$ by the gauge transformation $P_q$ is the matrix
\begin{gather*}
	P_{q} \cdot[A_{q}]:=\big(q^{Q \partial_{Q}} P_{q}\big) A_{q} P_{q}^{-1}.
\end{gather*}
A second $q$-difference system $q^{Q \partial_{Q}} X_{q}(Q)=B_{q}(Q) X_{q}(Q)$ is said to be equivalent (over $K$) by gauge transform to the first one if there exists a matrix $P_{q} \in {\rm GL}_{n}(K)$ such that
\begin{gather*}
B_{q}=P_{q} \cdot[A_{q}].	
\end{gather*}
\end{Definition}

Let us define the regular singular $q$-difference equation. We shall start from the local analytic study, i.e., taking field $\mathbb{C}(\{ Q \})$, and then look for solutions in the field $K=\mathcal{M}(\mathbb{C}^*)$ or $\mathcal{M}(\mathbb{C}^*,0)$.

\begin{Definition}
Let $A_{q} \in {\rm GL}_{n}(\mathbb{C}(\{ Q \}))$, a system $q^{Q \partial_{Q}} X_{q}(Q)=A_{q}(Q) X_{q}(Q)$ is said to be regular singular at $Q=0$ if there exists a $q$-gauge transform $P_{q} \in {\rm GL}_{n}(\mathbb{C}(\{Q\}))$ such that the matrix $(P_{q} \cdot[A_{q}])(0)$ is well-defined and invertible: $P_{q} \cdot[A_{q}](0) \in {\rm GL}_{n}(\mathbb{C})$.	
\end{Definition}

\begin{Definition}
Consider a regular singular $q$-difference system $q^{Q\partial_Q}X_q(Q) = A_q(Q)X_q(Q) $. Suppose $A_q(0) \in {\rm GL}_n(\mathbb{C})$ and denote by $(\lambda_i)$ the eigenvalues of the matrix $A_q(0)$. This $q$-difference system is said to be non $q$-resonant if for every $i \neq j$, we have $\frac{\lambda_i}{\lambda_j} \notin q^{\mathbb{Z} \backslash \{0\} }$, where $	q^{\mathbb{Z}\backslash \{0\}}:=\big\{q^{k} \mid k \in \mathbb{Z}\backslash \{0\} \big\} \subset \mathbb{C} $.	
\end{Definition}

Let's introduce some special functions which are needed to solve regular singular $q$-difference equations.

We define Jacobi's theta function by
\begin{gather*}
\theta_{q}(Q) =\sum_{d \in \mathbb{Z}} q^{\frac{d(d-1)}{2}} Q^{d}.
\end{gather*}
This function satisfies the $q$-difference equation $q^{Q \partial_{Q}} \theta_{q}(Q)=\frac{1}{Q} \theta_{q}(Q)$. And it has a famous Jacobi's triple identity
\begin{gather*}
	\theta_q(Q)= (q;q)_\infty (-Q;q)_\infty (-q/Q;q)_\infty.
\end{gather*}
In the following, we define two special functions which are essential in solving regular singular (irregular) $q$-difference equations.

\begin{Definition}
Let $\lambda_{q} \in \mathbb{C}^{*}$. The $q$-character associated to $\lambda$ is the function $e_{q, \lambda_{q}} \in \mathcal{M}\left(\mathbb{C}^{*}\right)$ defined by
\begin{gather*}
e_{q, \lambda_{q}}(Q)=\frac{\theta_{q}(Q)}{\theta_{q}(\lambda_{q} Q)} \in \mathcal{M}(\mathbb{C}^{*}).	
\end{gather*}
\end{Definition}

The function $e_{q,\lambda_q}$ satisfies the	 $q$-difference equation $q^{Q \partial_{Q}} e_{q, \lambda_{q}}(Q)=\lambda_{q} e_{q, \lambda_{q}}(Q)$.

\begin{Definition}
The $q$-logarithm is the function $\ell_{q} \in \mathcal{M}\left(\mathbb{C}^{*}\right)$ defined by
\begin{gather*}
\ell_{q}(Q)=-Q\frac{ \theta_{q}^{\prime}(Q)}{\theta_{q}(Q)}.	
\end{gather*}	
\end{Definition}

By a little computation, one could know that the function $\ell_q $ satisfies the following $q$-difference equation
\begin{gather*}
q^{Q \partial_{Q}} \ell_{q}(Q)=\ell_{q}(Q)+1	.
\end{gather*}

Now we can state the existence of a fundamental solution for regular singular $q$-difference equations under certain conditions.

For a $q$-difference system $q^{Q\partial_Q}X_q(Q) = A_q(Q)X_q(Q) $, without loss of generality, we assume $A_q(0) \in {\rm GL}_n(\mathbb{C}) $ and moreover that it is non-resonant. We can recursively build a gauge transform $F_{q} \in {\rm GL}_{n}(\mathbb{C}(\{ Q \}))$ which sends the matrix $A_q(0)$ to the constant matrix $A_q(Q)$, for details, see~\cite[Corollary 3.2.4]{hardouin:hal-01959879}. Then we take the Jordan--Chevalley decomposition of $A_q(0)=A_s A_u$, where $A_s$ is semi-simple, $A_u$ is unipotent and $A_s$, $A_u$ commute.

Since $N=A_u-I_n$ is nilpotent, we can define
\begin{gather}
A_u^{\ell_q} := (I_n+N)^{\ell_q} := \sum_{k \geq 0 } \binom{\ell_q}{k} N^k,	 \label{A_u^l}
\end{gather}
where
\begin{gather*}
	\binom{\ell_q}{k} := \frac{\ell_q(\ell_q-1)\cdots(\ell_q-(k-1))}{k!}.
\end{gather*}
Note that (\ref{A_u^l}) is actually a finite sum and $A_u^{\ell_q}$ is unipotent, and we have
\begin{gather*}
q^{Q\partial_Q} A_u^{\ell_q} = A_u	A_u^{\ell_q} = A_u^{\ell_q} A_u.
\end{gather*}
Thus we set
\begin{gather*}
e_{q,A_u} := A_u^{\ell_q}.	
\end{gather*}
Take a basis change $P$ to diagonalise $A_s=P^{-1}\operatorname{diag}(\lambda_i)P$. We define
\begin{gather}
e_{q, A_s}:=P^{-1} \operatorname{diag}(e_{q, \lambda_{i}}(Q)) P,	 \label{e_q}
\end{gather}
which satisfies
\begin{gather*}
q^{Q\partial_Q} e_{q, A_s} = A_s e_{q, A_s} = e_{q, A_s} A_s	.
\end{gather*}
Then one can check that the product $F_{q} e_{q, A_s} e_{q, A_u}=: \mathcal{X}_{q}(Q)$ is a fundamental solution of the $q$-difference system $q^{Q \partial_{Q}} X_{q}(Q)=A_{q}(Q) X_{q}(Q)$. We arrive at the following theorem.

{\sloppy\begin{Proposition}[{\cite[Theorem 3.3.1]{hardouin:hal-01959879}}] 
The $q$-difference system $\sigma_{q} X_q(Q)=A_q(Q) X_q(Q)$, regu\-lar singular at $Q=0$, admits a fundamental matricial solution $\mathcal{X}:=M e_{q, C} \in {\rm GL}_{n}(\mathcal{M}(\mathbb{C}^{*}, 0))$, where $C \in {\rm GL}_{n}(\mathbb{C})$ and where $M \in G L_{n}(\mathbb{C}(\{ Q \}))$. The $e_{q, C}$ is defined by Jordan--Chevalley decomposition of $C$ as above.
\end{Proposition}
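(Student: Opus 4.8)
The plan is to reduce the variable-coefficient system to a constant-coefficient one by a meromorphic gauge transform, to solve the constant system explicitly through the Jordan--Chevalley decomposition, and then to transport the solution back. Concretely, since the system is regular singular at $Q=0$ I would first assume, after a preliminary gauge transform, that $A_q(0) \in {\rm GL}_n(\mathbb{C})$, and after a further shearing normalisation that $A_q(0)$ is non-resonant. I would then invoke the recursive construction of \cite[Corollary 3.2.4]{hardouin:hal-01959879} to produce a gauge transform $M := F_q \in {\rm GL}_n(\mathbb{C}(\{Q\}))$ carrying $A_q$ to the constant matrix $C := A_q(0)$, so that it suffices to exhibit a fundamental solution of the constant system $\sigma_q X_q = C X_q$.

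For the constant system the idea is to split $C$ multiplicatively as $C = A_s A_u$, with $A_s$ semisimple, $A_u$ unipotent, and $A_s$, $A_u$ commuting. Because these factors commute, I would solve each separately and multiply. For the semisimple part I would diagonalise $A_s = P^{-1}\operatorname{diag}(\lambda_i) P$ and set $e_{q,A_s}$ as in \eqref{e_q}; since each $q$-character satisfies $\sigma_q e_{q,\lambda_i} = \lambda_i e_{q,\lambda_i}$ and lies in $\mathcal{M}(\mathbb{C}^*)$, the matrix $e_{q,A_s}$ is invertible with $\sigma_q e_{q,A_s} = A_s e_{q,A_s}$. For the unipotent part I would write $N := A_u - I_n$ and define $e_{q,A_u} := A_u^{\ell_q}$ through the binomial series \eqref{A_u^l}; this is a \emph{finite} sum because $N$ is nilpotent, hence polynomial in $\ell_q \in \mathcal{M}(\mathbb{C}^*)$, and the recursion $\sigma_q \ell_q = \ell_q + 1$ together with the binomial identity yields $\sigma_q A_u^{\ell_q} = A_u A_u^{\ell_q}$. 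Setting $e_{q,C} := e_{q,A_s} e_{q,A_u}$ and using commutativity of $A_s$, $A_u$, one verifies $\sigma_q e_{q,C} = C e_{q,C}$, so that $\mathcal{X} := M e_{q,C}$ solves the original system.

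Finally I would confirm the asserted invertibility and regularity. The factor $e_{q,A_u}$ is unipotent, hence invertible; $e_{q,A_s}$ is invertible as a conjugate of a diagonal matrix of nonvanishing meromorphic functions; and $M \in {\rm GL}_n(\mathbb{C}(\{Q\}))$. Since the entries of $M$ and the $e_{q,\lambda_i}$ are meromorphic in a punctured neighbourhood of $Q=0$, the product $\mathcal{X}$ lands in ${\rm GL}_n(\mathcal{M}(\mathbb{C}^*,0))$, as required.

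I expect the main obstacle to be the existence and convergence of the gauge transform $F_q$: the recursive solution of the cohomological equations order by order in $Q$ divides by factors of the form $\lambda_i - q^k \lambda_j$, which are invertible precisely under the non-resonance hypothesis, and one must check that the resulting formal series genuinely converges to an element of $\mathbb{C}(\{Q\})$. This is exactly the content imported from \cite[Corollary 3.2.4]{hardouin:hal-01959879}; the remaining steps are the essentially formal verifications sketched above.
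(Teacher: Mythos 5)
Your proposal is correct and follows essentially the same route as the paper: both reduce to a constant system via the gauge transform of \cite[Corollary 3.2.4]{hardouin:hal-01959879} (after assuming $A_q(0)$ invertible and non-resonant), take the Jordan--Chevalley decomposition $A_q(0)=A_sA_u$, build $e_{q,A_s}$ from $q$-characters and $e_{q,A_u}=A_u^{\ell_q}$ from the finite binomial series in the $q$-logarithm, and verify that $F_q\,e_{q,A_s}e_{q,A_u}$ is a fundamental solution. The commutation and invertibility checks you spell out are exactly the ones the paper leaves as ``one can check.''
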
}

\begin{Remark}
Let $A,P \in {\rm GL}_n(\mathbb{C})$, one can check that $e_{q,PAP^{-1}} = Pe_{q,A}P^{-1} $. Thus, (\ref{e_q}) is independent of the choice of $P$.	
\end{Remark}

\subsection[Monodromy of regular singular q-difference equations]
{Monodromy of regular singular $\boldsymbol {q}$-difference equations}

\begin{Definition} \label{def-fuchsian}
A $q$-difference system $q^{Q \partial_{Q}} X_{q}(Q)=A_{q}(Q) X_{q}(Q)$ is called fuchsian if it is regular singular both at $Q=0$ and $Q=\infty$.	
\end{Definition}
It is easy to see the difference equation (\ref{difference-equation}) is not fuchsian since it is not regular singular at $Q=0$. But we will see it is regular singular at~$Q=\infty$ (see (\ref{deg-25-infty})).

\begin{Definition}
Let $q^{Q \partial_{Q}} X_{q}(Q)=A_{q}(Q) X_{q}(Q)$ be a fuchsian $q$-difference system. This $q$-difference system admits a fundamental solution $\mathcal{X}_{0}(Q)$ at $Q=0$	and a second one $\mathcal{X}_\infty(1/Q)$ at $Q=\infty$. Birkhoff's connection matrix (or $q$-monodromy) $P_q$ is the ratio
\begin{gather*}
M_{q}(Q)= (\mathcal{X}_{\infty}(1 / Q) )^{-1}\mathcal{X}_{0}(Q).
\end{gather*}
\end{Definition}

Since the connection matrix relates two fundamental matrix solutions. It is invariant by difference operator $q^{Q\partial_Q}$, i.e.,
\begin{gather*}
M_q(Q) \in {\rm GL}_{n} (\mathcal{M} (\mathbb{E}_{q} ) ).
\end{gather*}
However, it is not well defined: it depends on the choice of fundamental matrix solutions. To~get rid of this dependence, we need to consider the following triple.
\begin{Definition}
A Birkhoff connection triple is a triple
\begin{gather*}
\big( A^{(0)}, M_q, A^{(\infty)} \big)	\in {\rm GL}_n(\mathbb{C}) \times {\rm GL}_n(\mathbb{E}_{q}) \times {\rm GL}_n(\mathbb{C})
\end{gather*}
up to certain equivalent. Where $A^{(0)}$ and $A^{(\infty)}$ are related to the fundamental solutions at $Q=0$ and $Q=\infty$ respectively, for more details, see \cite[p.~133]{hardouin:hal-01959879}.
\end{Definition}

The data of Birkhoff's connection triples classifies fuchsian $q$-difference systems up to gauge transformations.

\begin{Proposition}[{\cite[Theorem 3.4.9]{hardouin:hal-01959879}}]
	Rational classes $($under rational equivalence, i.e., over field $\mathbb{C}(Q))$ of fuchsian rational systems are in bijection with equivalence classes of Birkhoff connection triples.
\end{Proposition}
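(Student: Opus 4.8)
The plan is to exhibit the correspondence as an explicit forward map together with an inverse construction, and then to check well-definedness, injectivity and surjectivity. First I would define the forward map. Given a fuchsian rational system $\sigma_q X_q = A_q X_q$ with $A_q \in {\rm GL}_n(\mathbb{C}(Q))$, the fundamental-solution theorem stated above produces a solution $\mathcal{X}_0 = M^{(0)} e_{q,A^{(0)}}$ at $Q=0$, where $A^{(0)} \in {\rm GL}_n(\mathbb{C})$ is the constant matrix obtained from the Jordan--Chevalley decomposition of $(F_q\cdot[A_q])(0)$, and symmetrically a solution $\mathcal{X}_\infty = M^{(\infty)} e_{q,A^{(\infty)}}$ at $Q=\infty$. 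Since both $\mathcal{X}_0$ and $\mathcal{X}_\infty$ solve the same system, the ratio $M_q := \mathcal{X}_\infty^{-1}\mathcal{X}_0$ is $\sigma_q$-invariant and hence lies in ${\rm GL}_n(\mathcal{M}(\mathbb{E}_q))$; this assigns to the system the triple $\big(A^{(0)}, M_q, A^{(\infty)}\big)$.

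Second I would verify that this is well defined modulo the declared equivalence. The freedom in the local normal forms is a change of basis by constants $S_0, S_\infty \in {\rm GL}_n(\mathbb{C})$, under which $A^{(0)} \mapsto S_0 A^{(0)} S_0^{-1}$, $A^{(\infty)} \mapsto S_\infty A^{(\infty)} S_\infty^{-1}$ (using $e_{q,SAS^{-1}} = S e_{q,A} S^{-1}$ from the Remark above), while a rational gauge transform $A_q \mapsto P_q\cdot[A_q]$ multiplies both fundamental solutions on the left by $P_q$ and so leaves $M_q$ untouched. Propagating these through $M_q = \mathcal{X}_\infty^{-1}\mathcal{X}_0$ gives $M_q \mapsto S_\infty M_q S_0^{-1}$, which is exactly the equivalence relation on Birkhoff connection triples.

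For injectivity, suppose two fuchsian systems $A_q$ and $B_q$ yield equivalent triples; after conjugating we may assume the triples are equal. Matching the local fundamental solutions then furnishes local gauge transforms $P^{(0)} \in {\rm GL}_n(\mathbb{C}(\{Q\}))$ near $0$ and $P^{(\infty)}$ near $\infty$, each satisfying $B_q = P^{(0)}\cdot[A_q]$, resp.\ $B_q = P^{(\infty)}\cdot[A_q]$, on its punctured neighbourhood. The matrix $R := \big(P^{(\infty)}\big)^{-1}P^{(0)}$ then obeys $\sigma_q R = R$, so $R$ descends to $\mathbb{E}_q$; being holomorphic and invertible at both punctures it is constant, and equality of the triples forces $R = I_n$. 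Hence $P^{(0)}$ and $P^{(\infty)}$ glue to a single $P_q \in {\rm GL}_n(\mathcal{M}(\mathbb{C}^*))$, intertwining $A_q$ and $B_q$; as its entries are meromorphic on $\mathbb{C}^*$ with only finitely many $q$-orbits of poles and tempered growth at $0$ and $\infty$, a $\theta_q$-factorisation argument (Jacobi triple product) forces $P_q$ to be rational in $Q$, so $B_q = P_q\cdot[A_q]$ over $\mathbb{C}(Q)$.

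For surjectivity, given a triple $\big(A^{(0)}, M_q, A^{(\infty)}\big)$ I would reconstruct a system by prescribing its fundamental matrix: I seek $\mathcal{X}$ whose expansion at $0$ is $M^{(0)} e_{q,A^{(0)}}$, whose expansion at $\infty$ is $M^{(\infty)} e_{q,A^{(\infty)}}$, and with $\mathcal{X}_\infty^{-1}\mathcal{X}_0 = M_q$, after which $A_q := \big(\sigma_q\mathcal{X}\big)\mathcal{X}^{-1}$ is automatically $\sigma_q$-consistent and regular singular at both punctures. The substance is to make $A_q$ rational, which I would extract from the $q$-analogue of the Birkhoff factorisation: the datum $M_q \in {\rm GL}_n(\mathcal{M}(\mathbb{E}_q))$ with fixed local types $A^{(0)}, A^{(\infty)}$ factors as a product of a factor holomorphic-invertible at $0$ and one holomorphic-invertible at $\infty$, and these factors supply $M^{(0)}, M^{(\infty)}$ and hence a rational $A_q$ realising the triple. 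I expect the main obstacle to lie precisely in these rationality statements in both directions: locally every object is only meromorphic on $\mathbb{C}^*$ and $\sigma_q$-periodic, hence lives on $\mathbb{E}_q$, and the crux is promoting such an elliptic gluing to an honestly rational system. The decisive tool is the explicit zero/pole control afforded by $\theta_q$ and the characters $e_{q,\lambda}$, which guarantees finitely many $q$-orbits of poles and the growth bounds that, for a $\sigma_q$-invariant meromorphic function on $\mathbb{C}^*$, are equivalent to rationality in $Q$.
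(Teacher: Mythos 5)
This proposition is quoted in the paper from Sauloy \cite[Theorem~3.4.9]{hardouin:hal-01959879} without proof, so there is no internal argument to compare against; your sketch follows the standard Sauloy route (local fundamental solutions $\mathcal{X}_0=M^{(0)}e_{q,A^{(0)}}$, $\mathcal{X}_\infty=M^{(\infty)}e_{q,A^{(\infty)}}$, the connection matrix as forward map, rationality of a global gauge for injectivity, Birkhoff factorisation for surjectivity), and the forward map and equivalence bookkeeping are essentially right. But two of your steps contain genuine gaps. In the injectivity argument, the matrix $R:=\big(P^{(\infty)}\big)^{-1}P^{(0)}$ is not a well-defined object: $P^{(0)}\in {\rm GL}_n(\mathbb{C}(\{Q\}))$ is a germ at $Q=0$ and $P^{(\infty)}$ a germ at $Q=\infty$, so they share no common domain on which the product, let alone the equation $\sigma_q R=R$, makes sense, and there is nothing to ``glue''. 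The correct move runs the other way: build one globally meromorphic matrix directly from the fundamental solutions, $P:=\mathcal{X}^B_0\big(\mathcal{X}^A_0\big)^{-1}$, which lies in ${\rm GL}_n(\mathcal{M}(\mathbb{C}^*))$ for free because $\theta_q$, $e_{q,\lambda}$ and $\ell_q$ are meromorphic on all of $\mathbb{C}^*$; equality of the connection matrices then gives $\mathcal{X}^B_0\big(\mathcal{X}^A_0\big)^{-1}=\mathcal{X}^B_\infty\big(\mathcal{X}^A_\infty\big)^{-1}$, so $P$ is in addition meromorphic at $0$ and at $\infty$ (the $e_{q,C}$ factors cancel exactly because the local exponents of the two triples agree, which is where non-resonance enters), and a matrix meromorphic on all of $\mathbb{P}^1$ is rational.

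Your closing rationality criterion is also false as stated: a $\sigma_q$-invariant meromorphic function on $\mathbb{C}^*$ is an elliptic function on $\mathbb{E}_q$, and the only rational elliptic functions are constants, so no growth bound can make ``$\sigma_q$-invariant plus tempered'' equivalent to ``rational in $Q$''. The gauge $P_q$ is in any case not $\sigma_q$-invariant --- it satisfies the intertwining relation $P_q(qQ)=B_q(Q)P_q(Q)A_q(Q)^{-1}$ --- and its rationality comes from meromorphy at the two points $0$ and $\infty$, not from $\theta_q$-growth estimates. Relatedly, ``equality of the triples forces $R=I_n$'' overreaches even in spirit: constant matrices commuting with the local data always survive, which is precisely why the classification holds only up to the equivalence of triples $\big(A^{(0)},M_q,A^{(\infty)}\big)\sim\big(S_0A^{(0)}S_0^{-1},S_\infty M_qS_0^{-1},S_\infty A^{(\infty)}S_\infty^{-1}\big)$. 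Finally, for surjectivity you correctly identify the crux but then only ``expect'' it: what is needed, after stripping the $e_{q,A^{(0)}}$ and $e_{q,A^{(\infty)}}$ factors from the elliptic matrix $M_q$, is Birkhoff's factorisation theorem on the fundamental annulus $|q|\leq|Q|\leq 1$, producing one factor holomorphic and invertible near $0$ and one near $\infty$; citing and correctly invoking that theorem (as Sauloy does) is the substance of the proof, not an optional refinement.
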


\subsection[Confluence of regular singular q-difference equations]
{Confluence of regular singular $\boldsymbol{q}$-difference equations}

First, let us introduce some interesting formulas we needed when considering the confluence of difference equations. We fix $\tau_{0}$ such that $\operatorname{Im} \tau_{0}>0$ and $q_{0}:={\rm e}^{-2 {\rm i} \pi \tau_{0}}$ and $|q_0|<1$. This defines a~discrete logarithmic spiral $q_{0}^{\mathbb{Z}}:=\left\{q_0^{k} \mid k \in \mathbb{Z}\right\} \subset \mathbb{C}$ and a continuous spiral $q_0^{\mathbb{R}}:=\left\{q_0^{k} \mid k \in \mathbb{R}\right\}\allowbreak \subset \mathbb{C}$. Let $\Omega = \mathbb{C}^* - q_0^{\mathbb{R}}$. Denote by $\log (Q)$ the logarithm on $\Omega$ such that $1 \mapsto 0$. Let $Q^\mu := {\rm e}^{\mu \log (Q)}$.

\begin{Lemma}[{\cite[Section 3.1.7, Corollaire 1]{Sauloy00}}]
Let $q(t)=q^t_0$, $t\in (0,1]$. Assume there exist complex numbers $\alpha_0, \alpha_1 \in \mathbb{C}$ so that $Q_i(q(t))=Q_0q_0^{\alpha_it+o(t)}$, $Q_0 \in \Omega$. Then, on $\Omega$, we have the uniform convergence when $t \rightarrow 0$
	\begin{gather*}
	\lim_{q \rightarrow 1} \frac{\theta_{q(t)}(Q_1(q(t)) )}{\theta_{q(t)}(Q_2( q(t)))} = Q_0^{\alpha_2-\alpha_1}	.
	\end{gather*}
\end{Lemma}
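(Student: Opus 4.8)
\section*{Proof proposal}

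The plan is to reduce the ratio to the Jacobi triple product identity $\theta_q(Q) = (q;q)_\infty(-Q;q)_\infty(-q/Q;q)_\infty$ recalled above, and then to recognize the resulting infinite products as Riemann sums converging to explicit integrals as $q \to 1$. Write $q = q(t)$ and set $\epsilon := -\log q(t)$, so that $\mathrm{Re}\,\epsilon > 0$ (because $\mathrm{Im}\,\tau_0 > 0$) and $\epsilon \to 0$ as $t \to 0$; the hypothesis $Q_i(q(t)) = Q_0 q_0^{\alpha_i t + o(t)}$ becomes $Q_i = Q_0 q^{\alpha_i + o(1)}$, so in particular $Q_1, Q_2 \to Q_0$. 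In the quotient $\theta_q(Q_1)/\theta_q(Q_2)$ the factor $(q;q)_\infty$ cancels, and writing for brevity $Q_i = Q_0 q^{\alpha_i}$ (the genuine exponents carry an extra $o(1)$, absorbed at the end) we are left with
\[
\frac{\theta_q(Q_1)}{\theta_q(Q_2)} = \prod_{k\geq 0}\frac{1 + Q_0 q^{\alpha_1 + k}}{1 + Q_0 q^{\alpha_2 + k}}\cdot \prod_{k \geq 0}\frac{1 + Q_0^{-1} q^{k+1-\alpha_1}}{1 + Q_0^{-1}q^{k+1-\alpha_2}}.
\]

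First I would take the logarithm of each product and treat it as a Riemann sum in the variable $x = \epsilon k$. For the first product the summand is $\log\bigl(1 + Q_0 q^{\alpha_1}e^{-x}\bigr) - \log\bigl(1 + Q_0 q^{\alpha_2}e^{-x}\bigr)$, and since $q^{\alpha_i} = e^{-\epsilon\alpha_i} \to 1$ a first-order expansion turns $\sum_{k\geq0}$ into $\tfrac{1}{\epsilon}\bigl(g(\epsilon\alpha_1) - g(\epsilon\alpha_2)\bigr) \to g'(0)(\alpha_1 - \alpha_2)$, where $g(a) = \int_0^\infty \log\bigl(1 + Q_0 e^{-a}e^{-x}\bigr)\,dx$. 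A direct computation gives $g'(0) = -\int_0^\infty \frac{Q_0 e^{-x}}{1 + Q_0 e^{-x}}\,dx = -\log(1 + Q_0)$, so the first product contributes $(\alpha_2 - \alpha_1)\log(1 + Q_0)$ to the limiting logarithm. The identical computation applied to the second product (with $Q_0$ replaced by $Q_0^{-1}$ and $\alpha_i$ by $-\alpha_i$) contributes $(\alpha_1 - \alpha_2)\log\bigl(1 + Q_0^{-1}\bigr)$. Adding the two and using $\frac{1 + Q_0}{1 + Q_0^{-1}} = Q_0$ yields
\[
\log\frac{\theta_q(Q_1)}{\theta_q(Q_2)} \longrightarrow (\alpha_2 - \alpha_1)\bigl(\log(1 + Q_0) - \log\bigl(1 + Q_0^{-1}\bigr)\bigr) = (\alpha_2 - \alpha_1)\log Q_0,
\]
i.e.\ the ratio tends to $Q_0^{\alpha_2 - \alpha_1}$, as claimed.

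The hard part will be to make the passage ``sum $\to$ integral'' rigorous and, crucially, uniform in $Q_0$ on $\Omega$, while simultaneously pinning down the correct branch of $\log Q_0$. Two points require care. First, $\epsilon$ is complex (with $\mathrm{Re}\,\epsilon > 0$), so the sampling points $\epsilon k$ lie along the ray $\epsilon\,\mathbb{R}_{\geq 0}$ rather than on the positive real axis; to identify the limit with $\int_0^\infty$ I would deform this ray onto $[0,\infty)$ by Cauchy's theorem, which is legitimate precisely because $Q_0 \in \Omega = \mathbb{C}^* - q_0^{\mathbb{R}}$ guarantees that the integrand $\log(1 + Q_0 e^{-x})$ has no singularity on the swept region (its singularities sit at $e^{-x} = -Q_0^{-1}$, i.e.\ along the forbidden spiral). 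Second, this same deformation is what selects the branch: the determination of $\log Q_0$ produced by the limit is exactly the one continuous on $\Omega$ with $\log 1 = 0$, jumping by $2\pi i$ precisely when $Q_0$ crosses $q_0^{\mathbb{R}}$. I expect the bulk of the technical work---controlling the $o(1)$ perturbations of the exponents, bounding the Euler--Maclaurin (or Abel--Plana) remainder uniformly, and verifying the branch bookkeeping---to be concentrated here; the algebra of the two integrals is routine.

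As an alternative I would keep in reserve the modular route: writing $q = e^{2\pi i \tau}$, $Q = e^{2\pi i z}$ one checks $\theta_q(Q) = \vartheta(z - \tau/2\,|\,\tau)$ for $\vartheta(\zeta|\tau) = \sum_d e^{\pi i \tau d^2 + 2\pi i \zeta d}$, and the Jacobi imaginary transformation $\vartheta(\zeta|\tau) = (-i\tau)^{-1/2}e^{-\pi i \zeta^2/\tau}\vartheta\bigl(\zeta/\tau\,|\,{-1/\tau}\bigr)$ gives the Gaussian asymptotics $\theta_q(Q) \sim (-i\tau)^{-1/2}e^{-\pi i \zeta^2/\tau}$ as $\tau \to 0$. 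The prefactors cancel in the ratio, and with $\zeta_i = z_0 + \tau(\alpha_i - 1/2 + o(1))$ the exponent $-\frac{\pi i}{\tau}(\zeta_1^2 - \zeta_2^2)$ converges to $2\pi i z_0(\alpha_2 - \alpha_1)$, again giving $Q_0^{\alpha_2 - \alpha_1}$. In this formulation the same subtlety reappears as a Stokes phenomenon: deciding which term of $\vartheta\bigl(\zeta/\tau\,|\,{-1/\tau}\bigr)$ dominates is equivalent to fixing the branch of $\log Q_0$ on $\Omega$.
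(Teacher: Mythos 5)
The paper never proves this lemma: it is imported verbatim from Sauloy \cite{Sauloy00} (down to the typo $\alpha_0,\alpha_1$ for the $\alpha_1,\alpha_2$ actually used), so there is no in-paper argument to compare against, only Sauloy's. Measured against that, your primary route is genuinely different and more elementary: Sauloy's own derivation of the $q\to 1$ asymptotics of theta goes through the modular (Jacobi imaginary) transformation, i.e.\ precisely the argument you keep ``in reserve,'' while your main proof replaces it by the triple product plus a sum-to-integral limit. The central computation in your main route is correct: after cancelling $(q;q)_\infty$, the two log-sums converge to $(\alpha_2-\alpha_1)\log(1+Q_0)$ and $(\alpha_1-\alpha_2)\log\bigl(1+Q_0^{-1}\bigr)$, and $\log(1+Q_0)-\log\bigl(1+Q_0^{-1}\bigr)=\log Q_0$ gives the claim. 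The Euler--Maclaurin work you defer is real but routine: each individual sum carries an $O(1)$ boundary term $\tfrac12 f(0)$, but in the difference of the $\alpha_1$- and $\alpha_2$-sums these terms are themselves $O(\epsilon)$, so the total error is $o(1)$ uniformly on compacts; this is the estimate your write-up should actually display.

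The one concrete error is a sign, and it matters for your contour-deformation step. With this paper's convention $\theta_q(Q)=(q;q)_\infty(-Q;q)_\infty(-q/Q;q)_\infty$, the zeros of $\theta_q$ lie on $-q^{\mathbb{Z}}$, and accordingly the singularities of your integrand sit at ${\rm e}^{-x}=-Q_0^{-1}$: they obstruct the rotation of the ray $\epsilon\mathbb{R}_{\geq0}$ onto $[0,\infty)$ exactly when $Q_0\in -q_0^{\mathbb{R}}$, i.e.\ on the \emph{negative} of the forbidden spiral, not on $q_0^{\mathbb{R}}$ as you assert. So ``legitimate precisely because $Q_0\in\Omega$'' is not right; the hypothesis your argument needs is $-Q_0\in\Omega$. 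This slip is inherited from the statement itself as transcribed in this paper: since the zeros $-q(t)^{\mathbb{Z}}$ of $\theta_{q(t)}$ accumulate on all of $-q_0^{\mathbb{R}}$ as $t\to 0$, uniform convergence cannot hold on compacts of $\Omega$ that meet $-q_0^{\mathbb{R}}$, which is why Proposition \ref{confluence-ell-e} (quoted from the same source) evaluates $\ell_{q(t)}$ and $e_{q(t),\lambda_{q(t)}}$ at $-Q$ rather than $Q$. Your proof should state and use the corrected hypothesis rather than reproduce the slip; with that change, both of your routes (and in particular the branch bookkeeping, which your deformation then pins down on the simply connected domain $\mathbb{C}^*-\bigl({-}q_0^{\mathbb{R}}\bigr)$) go through.
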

\begin{Proposition}[{\cite[Sections 3.1.3 and 3.1.4]{Sauloy00}}] \label{confluence-ell-e}
	As the above notation, consider $\lambda_{q(t)}$, $\mu \in \mathbb{C}^*$ such that $\frac{\lambda_{q(t)}-1}{q-1} \rightarrow \mu$. Then we have the asymptotics:
	\begin{itemize}\itemsep=0pt
	\item[$1.$] We have the uniform convergence on any compact of $\Omega$	
	\begin{gather*}
	\lim_{t \rightarrow 0}(q(t)-1)\ell_{q(t)}(-Q) = \log(Q).	
	\end{gather*}
 \item[$2.$] We have the uniform convergence on any compact of $\Omega$	
 \begin{gather*}
 	\lim_{t \rightarrow 0} e_{q(t),\lambda_{q(t)}}(-Q) = Q^\mu.
 \end{gather*}
	\end{itemize}
\end{Proposition}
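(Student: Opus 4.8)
The plan is to establish the second asymptotic first, since it is a direct consequence of the preceding Lemma (\cite[Section 3.1.7, Corollaire 1]{Sauloy00}), and then to deduce the first by differentiating in the confluence parameter $\mu$.

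For part~2, I would write the $q$-character as a ratio of theta functions, $e_{q,\lambda_q}(-Q)=\theta_q(-Q)/\theta_q(-\lambda_q Q)$, and apply the preceding Lemma with $Q_1(q(t))=-Q$ and $Q_2(q(t))=-\lambda_{q(t)}Q$. The numerator argument is constant in $t$, so its exponent is $\alpha_1=0$. For the denominator, the hypothesis $\frac{\lambda_{q(t)}-1}{q-1}\to\mu$ together with the expansion $q-1=q_0^t-1=t\log q_0+o(t)$ gives $\lambda_{q(t)}=1+\mu t\log q_0+o(t)=q_0^{\mu t+o(t)}$, hence $Q_2(q(t))=-Q\cdot q_0^{\mu t+o(t)}$ and $\alpha_2=\mu$. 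The Lemma then yields the common base point raised to $\alpha_2-\alpha_1=\mu$; the evaluation at $-Q$ rather than $Q$ is precisely what keeps the base point in $\Omega$ and makes Jacobi's triple product clean, absorbing the spurious $\log(-1)$ branch contributions, so that the limit is $Q^\mu$ on the stated branch of $\log$ on $\Omega$.

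For part~1, I would differentiate the identity of part~2 in $\mu$ at $\mu=0$. Logarithmic differentiation of the theta-ratio in the spectral parameter gives $\partial_{\lambda_q}\log e_{q,\lambda_q}(Q)\big|_{\lambda_q=1}=\ell_q(Q)$, and since $e_{q,1}=1$ this reads $\partial_{\lambda_q}e_{q,\lambda_q}(-Q)\big|_{\lambda_q=1}=\ell_q(-Q)$. Because $\lambda_q=1+\mu(q-1)+o(q-1)$ we have $\frac{d\lambda_q}{d\mu}=q-1$, so $\frac{d}{d\mu}e_{q,\lambda_q}(-Q)\big|_{\mu=0}=(q-1)\ell_q(-Q)$; on the other side $\frac{d}{d\mu}Q^\mu\big|_{\mu=0}=\log Q$. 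Matching these gives exactly $\lim_{t\to0}(q(t)-1)\ell_{q(t)}(-Q)=\log Q$. An independent route, useful as a check, is to expand $\theta_q(-Q)$ by the triple product, write $\ell_q(-Q)$ as the sum $\sum_{k\geq0}\frac{q^kQ}{1-q^kQ}-\sum_{k\geq1}\frac{q^k/Q}{1-q^k/Q}$, and recognize $(1-q)$ times each sum as a Riemann sum converging to $\int_0^Q\frac{dx}{1-x}$ and $\int_0^{1/Q}\frac{dy}{1-y}$ respectively.

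The main obstacle I anticipate is analytic rather than algebraic: one must establish the convergence uniformly on compact subsets of $\Omega$ and, for part~1, justify interchanging $\lim_{t\to0}$ with $\partial_\mu$. Both hinge on controlling the theta quotients along the logarithmic spiral $q_0^{\mathbb{R}}$ and on fixing the branch of the logarithm consistently on $\Omega$; the whole point of evaluating at $-Q$ is to cancel the $\log(-1)$ ambiguities that would otherwise appear. Once the uniform estimates underlying the preceding Lemma are in hand, the differentiation step and the interchange of limits become routine.
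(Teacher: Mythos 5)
The first thing to note is that the paper contains no proof of this Proposition: both it and the preceding Lemma are quoted from Sauloy \cite{Sauloy00}, so your attempt can only be judged against what a complete argument requires, and judged that way both halves have genuine gaps, concentrated exactly at the two points you defer. For part~2, taking $Q_1=-Q$ and $Q_2=-\lambda_{q(t)}Q$ makes the common base point $Q_0=-Q$, so the Lemma read literally (i) requires $-Q\in\Omega$, which for $Q$ in a compact of $\Omega$ generally fails --- for real $q_0\in(0,1)$ one has $q_0^{\mathbb{R}}=(0,+\infty)$, so $\Omega=\mathbb{C}\setminus[0,+\infty)$, and $Q=-1\in\Omega$ while $-Q=1\notin\Omega$; your claim that evaluating at $-Q$ ``keeps the base point in $\Omega$'' is backwards, since the minus sign is there to avoid the zeros of $\theta_q$, which by the triple product lie on $-q^{\mathbb{Z}}$, a different set --- and (ii) yields $(-Q)^{\mu}$, and the factor ${\rm e}^{\mu\log(-1)}$ separating this from $Q^{\mu}$ is not removable by any choice of branch. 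This discrepancy is not a phantom to be ``absorbed'': with the paper's $\theta_q$, the symmetry $d\mapsto 1-d$ of $d(d-1)/2$ gives $\theta_q'(1)=\tfrac12\theta_q(1)$, hence $\ell_q(1)=-\tfrac12$ for every $q$, so at $Q=-1\in\Omega$ the quantity $(q-1)\ell_q(-Q)$ tends to $0$, which is $\log(-Q)\big|_{Q=-1}$ and not $\log(Q)\big|_{Q=-1}=\pm{\rm i}\pi$. In other words, in the paper's conventions the honest limits are $\log(-Q)$ and $(-Q)^{\mu}$ (suitably normalized branches); the gap between that and the stated $\log Q$, $Q^{\mu}$ is a convention mismatch inherited from transcribing Sauloy, and any correct proof must confront it by redoing the theta asymptotics (triple product or modular transformation) with the signs tracked --- which your argument never does.

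For part~1, your differentiation identity is correct ($\partial_\lambda e_{q,\lambda}(z)\big|_{\lambda=1}=-z\theta_q'(z)/\theta_q(z)=\ell_q(z)$) and the idea is attractive, but the interchange of $\partial_\mu$ with $\lim_{t\to0}$ is not ``routine'': the hypothesis furnishes a single family $\lambda_{q(t)}$ with a single limit $\mu$, so you must first prove part~2 for the whole family $\lambda_q(\mu)=1+\mu(q-1)$, locally uniformly in $\mu$, and then invoke Weierstrass/Vitali for holomorphic limits to pass to $\mu$-derivatives; that strengthened part~2 is precisely what your first paragraph fails to establish, so the reduction is circular as it stands. Moreover your own fallback check refutes, rather than confirms, the unqualified statement: carried to completion, the Riemann sums give $(q-1)\ell_q(-Q)\to\log(1-Q)-\log(1-1/Q)$, and since $(1-Q)/(1-1/Q)=-Q$ identically, this is $\log(-Q)$ again --- the same sign/branch issue, now on the nose. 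So the skeleton of your proposal is the natural one, but the two items you dismiss as bookkeeping (the $-Q$ versus $Q$ discrepancy, and the limit--derivative interchange) are the entire mathematical content of the Proposition.
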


Now, let's introduce the definition of confluence.
\begin{Definition}[{\cite[Section 3.2]{Sauloy00}}]
	Let $q(t)=q^t_0$, for $t \in (0,1] $. A regular singular, non $q$-resonant difference system $q^{Q\partial_Q}X_q(Q)=A_q(Q)X_q(Q)$ is said to be confluent if it satisfies four conditions below. Set $B_q(Q) = \frac{A_q(Q)-Id}{q-1} $, whose coefficients have poles $Q_1(q), \ldots, Q_k(q)$ in the input $Q$. We require that
	\begin{itemize}\itemsep=0pt
	\item[1.] The $q$-spirals satisfy $ \bigcap_{i=1}^k Q_i(q_0)q_0^{\mathbb{R}} = \varnothing $.
	\item[2.] There exists a matrix $\tilde{B} \in GL_n(\mathbb{C}(Q))$ such that
	\begin{gather*}
	\lim_{t \rightarrow 0} B_{q(t)} = \widetilde{B},
	\end{gather*}
	uniformly in $Q$ on any compact of $\mathbb{C}^*- \bigcup_{i=0}^k Q_iq_0^{\mathbb{R}} $, set $Q_0=1$.
	\item[3.] This limit defines a regular singular, non resonant differential system
 \begin{gather*}
	Q\frac{\rm d}{{\rm d}Q} \widetilde{X} = \widetilde{B} \widetilde{X}.	
	\end{gather*}
 \item[4.] There exists, for each $t$, a Jordan decompositions $B_{q(t)}(0) = P_{q(t)}^{-1} J_{q(t)} P_{q(t)} $ as well as $\widetilde{B}(0)=\widetilde{P}^{-1}\widetilde{J}\widetilde{P}$. We ask that
 \begin{gather*}
 	\lim_{t\rightarrow 0}P_{q(t)} = \widetilde{P}.
 \end{gather*}
	\end{itemize}
\end{Definition}

If the difference system is confluent, then there is a confluence of the solutions.
\begin{Proposition}[{\cite[Theorem V.2.4.7]{Ro19}}]
Let $q(t)=q^t_0$, for $t \in [0,1] $. Consider a regular singular confluent $q$-difference system $q^{Q \partial_{Q}} X_{q}(Q)=A_{q}(Q) X_{q}(Q)$, whose limit system is $Q \partial_{Q} \widetilde{X}(Q)=$ $\widetilde{B}(Q) \widetilde{X}(Q)$.

Assume that there exists a vector $X_{0} \in \mathbb{C}^{n}\backslash \{0\}$, independent of $q$, such that $A_{q(t)} X_{0}=X_{0}$ for all $t \in(0,1]$. We also assume that we have a solution $\mathcal{X}_q(Q)$ of the $q$-difference system satisfying the initial condition $\mathcal{X}_q(0)=X_{0}$.

Let $\widetilde{\mathcal{X}}(Q)$ be the unique solution of $Q \partial_{Q} X(Q)=\widetilde{B}(Q) X(Q)$ satisfying the initial condition $\widetilde{\mathcal{X}}(0)=X_{0}$. We have
\begin{align*}
\lim _{t \rightarrow 0} \mathcal{X}_{q(t)}(Q)=\widetilde{\mathcal{X}}(Q)	
\end{align*}
uniformly in $Q$ on any compact of $\mathbb{C}^{*}-\bigcup_{i=0}^{k} Q_{i} q_{0}^{\mathbb{R}}$.
\end{Proposition}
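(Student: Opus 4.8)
The plan is to reduce the statement to a comparison of holomorphic-at-$0$ power-series solutions together with a global propagation argument through the functional equation. First I would record a simplification coming from the hypotheses: since $A_{q(t)}X_0=X_0$, the vector $X_0$ is an eigenvector of the constant term $A_q(0)$ for the eigenvalue $1$, and non-resonance guarantees a unique solution holomorphic at $Q=0$ with $\mathcal{X}_q(0)=X_0$. Because the relevant eigenvalue is $1$, the associated $q$-character $e_{q,1}=\theta_q/\theta_q$ is constant, so $\mathcal{X}_q$ carries \emph{no} transcendental ($\ell_q$- or $e_{q,\lambda}$-) factor in the sense of the fundamental-solution result for regular singular systems: it is genuinely a convergent Taylor series $\mathcal{X}_q(Q)=\sum_{d\ge0}c_d(q)Q^d$ with $c_0=X_0$. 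The same holds on the limiting side, whose regular-singular non-resonant hypothesis yields a holomorphic solution $\widetilde{\mathcal{X}}(Q)=\sum_{d\ge0}\tilde{c}_dQ^d$ with $\tilde{c}_0=X_0$. Thus the transcendental confluence of Proposition~\ref{confluence-ell-e} is not needed for this particular solution, and the whole problem becomes comparing two families of Taylor coefficients and then propagating their convergence over the whole domain.

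Next I would set up and match the two recursions. Writing $A_q(Q)=\sum_kA_q^{(k)}Q^k$ and substituting the ansatz into $\sigma_q\mathcal{X}_q=A_q\mathcal{X}_q$ gives $\big(q^dI-A_q^{(0)}\big)c_d=\sum_{k=1}^dA_q^{(k)}c_{d-k}$. Using $A_q=\mathrm{Id}+(q-1)B_q$ with $B_q^{(k)}\to\widetilde{B}^{(k)}$ and factoring out $q-1$, this becomes $\big(d_qI-B_q^{(0)}\big)c_d=\sum_{k=1}^dB_q^{(k)}c_{d-k}$, where $d_q:=\frac{q^d-1}{q-1}=1+q+\dots+q^{d-1}\to d$. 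The limiting differential system produces the strictly parallel recursion $\big(dI-\widetilde{B}^{(0)}\big)\tilde{c}_d=\sum_{k=1}^d\widetilde{B}^{(k)}\tilde{c}_{d-k}$. Non-resonance makes $d_qI-B_q^{(0)}$ and $dI-\widetilde{B}^{(0)}$ invertible for every $d\ge1$ and every $q$ near $1$, so an induction on $d$ (base case $c_0=\tilde{c}_0=X_0$) gives $\lim_{t\to0}c_d(q(t))=\tilde{c}_d$ coefficientwise. To upgrade this to \emph{uniform} convergence on a small disc $|Q|\le r$, I would derive, uniformly in $q$ near $1$, bounds $\|[d_qI-B_q^{(0)}]^{-1}\|\le C/d$ and $\|B_q^{(k)}\|\le C\rho^{-k}$ from the convergence $B_q\to\widetilde{B}$ and the analyticity of $\widetilde{B}$; feeding these into the recursion yields a uniform geometric bound $\|c_d(q)\|\le C'R^d$, whence the two series converge uniformly and $\mathcal{X}_{q(t)}\to\widetilde{\mathcal{X}}$ uniformly on $|Q|\le r$.

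The genuinely global part, namely uniform convergence on an arbitrary compact of $\mathbb{C}^*\setminus\bigcup_iQ_iq_0^{\mathbb{R}}$, I would handle by propagating through the functional equation. Iterating $\mathcal{X}_q(Q)=A_q(Q)^{-1}\mathcal{X}_q(qQ)$ and using $q^NQ\to0$, $\mathcal{X}_q(q^NQ)\to X_0$, gives the ordered infinite product $\mathcal{X}_q(Q)=\big(\prod_{k\ge0}A_q(q^kQ)^{-1}\big)X_0$, convergent precisely away from the spirals $Q_iq^{\mathbb{Z}}$ where $\det A_q$ vanishes. Writing $A_q(q^kQ)^{-1}=\big(\mathrm{Id}+(q-1)B_q(q^kQ)\big)^{-1}$ and expanding, the logarithm of the partial products becomes the Riemann sum $-\sum_k(q-1)B_q(q^kQ)+O(q-1)$, whose sample points $q^kQ$ have logarithmic spacing $\log q\sim q-1$ and therefore approximate $\int_0^Q\widetilde{B}(u)\,\frac{du}{u}$ along the logarithmic spiral joining $Q$ to $0$. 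By the standard product-integral (Magnus-type) comparison, the limit of the infinite product is exactly the transport operator of the ODE, so $\lim_{t\to0}\mathcal{X}_{q(t)}(Q)=\widetilde{\mathcal{X}}(Q)$.

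I expect the main obstacle to be precisely this last passage: making the transition from the discrete infinite product to the continuous product integral \emph{uniform} in $(q,Q)$. One must (i) bound the product and its tail uniformly in $q$ near $1$ and in $Q$ on the given compact, which forces one to stay a fixed distance from every singular spiral; this is where the disjoint-spirals confluence hypothesis $\bigcap_iQ_i(q_0)q_0^{\mathbb{R}}=\varnothing$ and the excision of $\bigcup_iQ_iq_0^{\mathbb{R}}$ become essential, and (ii) control the Riemann-sum error uniformly as the spacing shrinks like $|q-1|$, then (iii) glue this regime to the disc $|Q|\le r$ already treated, where the seed $\mathcal{X}_q(q^NQ)\to X_0$ lives. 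Non-resonance of the limit differential system is what guarantees that the product-integral solution is single-valued with the prescribed value $X_0$ at $0$, which closes the argument.
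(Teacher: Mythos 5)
First, a point of comparison: the paper itself contains no proof of this proposition --- it is imported verbatim from \cite{Ro19} (Theorem V.2.4.7) --- so your attempt can only be judged on its own merits, not against an internal argument.

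Your first half (the local step) is essentially sound and is the standard approach: the hypothesis $A_{q}(0)X_0=X_0$ forces $B_{q}(0)X_0=0$, the relevant solution is a genuine power series, and the recursions $\big(d_q I-B_q^{(0)}\big)c_d=\sum_{k=1}^d B_q^{(k)}c_{d-k}$ with $d_q=\frac{q^d-1}{q-1}$ match term by term with the differential recursion. Two inaccuracies here are repairable: (a) your bound $\big\|\big(d_qI-B_q^{(0)}\big)^{-1}\big\|\le C/d$ is false uniformly in $q$, because for fixed $|q|<1$ the quantity $d_q$ is bounded by $\frac{1}{|1-q|}$ and does not grow like $d$; however a $d$-independent uniform bound $M$ does hold (non-resonance of the limit system keeps $d_q$ away from the spectrum of $B_q^{(0)}$), and $M$ alone already closes the geometric estimate $\|c_d\|\le C'R^d$. (b) You quietly use $B_q^{(k)}\to\widetilde B^{(k)}$ for all Taylor coefficients at $0$, which needs an argument, since every circle around $0$ meets the excluded spirals on which no convergence is assumed; this can be fixed using rationality of $B_q$ and condition 4 of confluence.

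The genuine gap is the global step. The formula $\mathcal{X}_q(Q)=\big(\prod_{k\ge0}A_q\big(q^kQ\big)^{-1}\big)X_0$, ``convergent precisely away from the spirals where $\det A_q$ vanishes,'' is simply false in general. Take
\begin{gather*}
A_q(Q)=\begin{pmatrix}1 & 0\\ (q-1)Q & q^{5/2}\end{pmatrix},\qquad X_0=\begin{pmatrix}1\\0\end{pmatrix},
\end{gather*}
a regular singular, non-resonant, confluent family fixing $X_0$, with $B_q(Q)\to\widetilde B(Q)=\begin{pmatrix}0&0\\ Q&5/2\end{pmatrix}$ and $\det A_q\equiv q^{5/2}$ never vanishing. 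A direct computation gives
\begin{gather*}
\prod_{k=0}^{N-1}A_q\big(q^kQ\big)^{-1}X_0=\begin{pmatrix}1\\ -(q-1)q^{-5/2}Q\sum_{k=0}^{N-1}q^{-3k/2}\end{pmatrix},
\end{gather*}
which diverges geometrically for every $Q\neq0$, even though the theorem (and its conclusion) hold for this system, whose actual solution is $\mathcal{X}_q(Q)=\big(1,\frac{q-1}{q-q^{5/2}}Q\big)^{\mathrm t}$. The mechanism: replacing the seed $\mathcal{X}_q\big(q^NQ\big)$ by its limit $X_0$ commits an error $O\big(|q|^N\big)$ that is amplified by the factor $q^{-5N/2}$ in the product whenever $A_q(0)$ has an eigenvalue $q^{\mu}$ with $\operatorname{Re}\mu>1$; the exact identity $\mathcal{X}_q(Q)=P_N\,\mathcal{X}_q\big(q^NQ\big)$ survives only through cancellation of two divergent terms. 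The continuous object you compare against is equally ill-defined: $\int_0^Q\widetilde B(u)\,\frac{{\rm d}u}{u}$ diverges at $u=0$ whenever $\widetilde B(0)\neq0$, and ``applying to $X_0$'' does not commute with path-ordering. The repair --- and the actual content of such proofs --- is never to transport from $0$ itself: seed at a point $q^{N(q)}Q$ on a fixed circle $|u|=r$ where your disc estimate already gives uniform convergence, and compare the \emph{finite} product of $N(q)\approx\log(|Q|/r)/|\log q|$ Euler factors $I+(q-1)B_q\big(q^ku\big)$ with the path-ordered exponential of the ODE along the $q_0$-spiral segment joining that circle to $Q$; this segment stays at positive distance from all singular spirals exactly because the excluded set is a union of full $q_0^{\mathbb{R}}$-orbits, and the Euler-scheme error is $O(|1-q|)$ uniformly on the compact. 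As written, your infinite-product formula and the Riemann-sum comparison built on it would fail.
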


\section{The difference equation for quintic}\label{sec3}
\subsection{General technique: Newton polygon}
Let's consider the equation
\begin{gather*}
\sum_{i=0}^n a_i(Q) (\sigma_q)^i f(Q)=0,	
\end{gather*}
with
\begin{gather*}
a_i(Q)= a_{i,0}+a_{i,1}Q+a_{i,2}Q^2 + \cdots.	
\end{gather*}
We call the following equation the \emph{characteristic equation}	
\begin{gather}
a_{n,0}x^n+ a_{n-1,0} x^{n-1} + \cdots + a_{1,0}x + a_{0,0} =0, \label{char-equ}
\end{gather}
which plays an important role in constructing solutions.

Denote by $a_{i,j_i}$ the first nonzero coefficient in $a_i(Q)$, and choosing $i$- and $j$-axes as horizontal and vertical axes respectively, plot the points $(n-i, j_i)$. Construct a broken line, convex downward, such that both ends of each segment of the line are points of the set $(n-i, j_i)$. Then we obtain a Newton polygon as follows
\begin{center}
\begin{tikzpicture}
\draw [xstep=1, ystep=1, draw=gray](0,0)grid(5,5);
\draw [->, very thick](0,0)--(5,0);
\draw [->, very thick](0,0)--(0,5);

\draw [very thick, draw=red](0,4)--(1,1);
\draw [very thick, draw=red](1,1)--(2,0);
\draw [very thick, draw=red](2,0)--(4,0);
\draw [very thick, draw=red](4,0)--(5,2);
\end{tikzpicture}
\end{center}
Note that the horizontal segment corresponds to the characteristic equation	
\begin{gather*}
a_{k,0}x^k+ a_{k-1,0} x^{k-1} + \cdots + a_{d,0}x^d = 0.
\end{gather*}
The degree of the above characteristic equation is 1 less than the number of points on or above that segment.

\begin{Example}
	Consider the following equation:
	\begin{gather*}
	\big[	\big(Q^4+2Q^7\big)\sigma_q^6 + \big(Q+3Q^5\big)\sigma_q^5 + \big(3+2Q^3\big)\sigma_q^4 + 2 \sigma_q^3 + 3Q \sigma_q^2 + Q^2\sigma_q \big]f(Q)=0.
	\end{gather*}
Then the associated Newton polygon is
\begin{center}
\begin{tikzpicture}
\draw [xstep=1, ystep=1, draw=gray];
\draw [->, very thick](0,0)--(5,0);
\draw [->, very thick](0,0)--(0,5);

\draw [very thick, draw=red](0,4)--(1,1);
\draw [very thick, draw=red](1,1)--(2,0);
\draw [very thick, draw=red](2,0)--(4,0);
\draw [very thick, draw=red](4,0)--(5,2);
\draw [draw=red](-0.5,4) node{(0,4)};
\draw [draw=red](0.5,1) node{(1,1)};
\draw [draw=red](2,-0.4) node{(2,0)};
\draw [draw=red](3,-0.4) node{(3,0)};
\draw [draw=red](4,-0.4) node{(4,0)};
\draw [draw=red](5,2.2) node{(5,2)};
\end{tikzpicture}
\end{center}
\end{Example}

The general technique to construct solutions is as follows:
\begin{itemize}\itemsep=0pt
\item{\it{Horizontal segment:}} As mentioned above, it corresponds to characteristic equation. Using the non-zero roots, we could construct the associated solutions as regular singular cases.
\item{\it{Non-horizontal segment:}} For each non-horizontal segment of slope $\mu$, a rational number.
\begin{itemize}\itemsep=0pt
\item If $\mu = r$ is an integer, we consider a formal series solution of the form
\begin{gather*}
	\theta^{r}_q(Q)\sum_{n=0}^{\infty} f_n(q)Q^n.
\end{gather*}

\item If $\mu = t/s$ is a rational number with $s$ positive, then we consider a formal series solution of the form
\begin{gather*}
 \theta_{q^{t/s}}\big(Q^{t/s}\big) \sum_{n=0}^{\infty} f_{n}(q) Q^{n/s}.	
\end{gather*}
\end{itemize}
\end{itemize}
For more details, see \cite{Adams2, Adams1931, RW21}.

\begin{Remark}\label{remark3.2}
In Adams' works, he used $q^{\frac{\mu}{2}(t^2-t)}$, where $t=\frac{\ln Q}{\ln q}$.	
\end{Remark}

\subsection[Solutions at Q=0]{Solutions at $\boldsymbol{Q=0}$}

The $K$-theoretic $I$-function of quintic is as follows \cite{givental2015}
\begin{gather*}
I^K = P^{l_q(Q)} \sum_{d \geq 0} \frac{\prod_{k=1}^{5d}\big(1-P^5q^k\big)}{\prod_{k=1}^{d}\big(1-Pq^k\big)^5}Q^d,	
\end{gather*}
it satisfies the following degree 25 difference equation
\begin{gather}
\Bigg[\big( 1- q^{Q\partial_Q} \big)^5 - Q \prod_{k=1}^{5}\big( 1-q^k q^{5Q\partial_Q}\big)\bigg] I^K=0	 \mod\big((1-P)^5\big).	\label{deg-25-difference-equation}
\end{gather}

\begin{Remark}See \cite{GS21} for additional discussion on $q$-deformed Picard--Fuchs equation.
\end{Remark}

The characteristic equation at $Q=0$ is
\begin{gather*}
(1-x)^5=0.	
\end{gather*}
We can construct only 5 solutions by expanding $I^K_X(q,Q)$ with respect to the $K$-group basis $(1-P)^k$, $k=0,1,2,3,4$. Next we use Adams' method to find other solutions at $Q=0$. The Newton's polygon of the above difference equation (\ref{deg-25-difference-equation}) is as follows
\begin{center}
\begin{tikzpicture}
\draw [xstep=1, ystep=1, draw=gray];
\draw [->, very thick](0,0)--(5,0);
\draw [->, very thick](0,0)--(0,2);
\draw [very thick, draw=red](0,1)--(4,0);
\draw [very thick, draw=red](4,0)--(5,0);
\draw [draw=red](-0.5,1) node{(0,1)} ;
\draw [draw=red](4,-0.4) node{(20,0)};
\end{tikzpicture}
\end{center}
Then we have solutions at $Q=0$ of the form
\begin{gather*}
	\theta_{q^{\frac{1}{20}}}\big(Q^{\frac{1}{20}}\big)G\big(Q^{\frac{1}{20}}\big) = \theta_{q^{\frac{1}{20}}}\big(Q^{\frac{1}{20}}\big) \sum_{d=0}^\infty g_d Q^{\frac{d}{20}},
\end{gather*}
where $\theta_{q^{\frac{1}{20}}}\big(Q^{\frac{1}{20}}\big)$ satisfies
\begin{gather*}
q^{Q\partial_Q} \theta_{q^{\frac{1}{20}}}\big(Q^{\frac{1}{20}}\big) = \bigg(\frac{1}{Q}\bigg)^{\frac{1}{20}}\theta_{q^{\frac{1}{20}}}\big(Q^{\frac{1}{20}}\big),	
\end{gather*}
then
\begin{gather*}
	q^{5Q\partial_Q} \theta_{q^{\frac{1}{20}}}\big(Q^{\frac{1}{20}}\big) = q^{-\frac{1}{2}}\bigg(\frac{1}{Q}\bigg)^{\frac{1}{4}}\theta_{q^{\frac{1}{20}}}\big(Q^{\frac{1}{20}}\big).
\end{gather*}
Substituting into (\ref{deg-25-difference-equation}), we find that $G\big(Q^{\frac{1}{20}}\big)$ satisfies the following difference equation:
\begin{gather*}
\Big[\big(Q^{\frac{1}{20}}-q^{-\frac{4}{20}}q^{Q\partial_Q}\big) \big(Q^{\frac{1}{20}}-q^{-\frac{3}{20}}q^{Q\partial_Q} \big) \big(Q^{\frac{1}{20}}-q^{-\frac{2}{20}}q^{Q\partial_Q} \big) \big(Q^{\frac{1}{20}}-q^{-\frac{1}{20}}q^{Q\partial_Q}\big)\big(Q^{\frac{1}{20}}-q^{Q\partial_Q}\big)
\\ \qquad
{}- \big(Q^{\frac{1}{4}}\!-q^{-\frac{2}{4}}q^{5Q\partial_Q}\big) \big(Q^{\frac{1}{4}}\!-q^{-\frac{1}{4}}q^{5Q\partial_Q}\big) \big(Q^{\frac{1}{4}}\!-q^{5Q\partial_Q}\big) \big(Q^{\frac{1}{4}}\!-q^{\frac{1}{4}}q^{5Q\partial_Q}\big) \big(Q^{\frac{1}{4}}\!-q^{\frac{1}{2}}q^{5Q\partial_Q}\big)\Big]
\\ \qquad
{}\times G\big(Q^{\frac{1}{20}}\big) =0.
\end{gather*}
Let $z=Q^{\frac{1}{20}}$, $p=q^{\frac{1}{20}}$ and $\sigma_p=p^{z\partial_z}$, then the above difference equation takes the following form
\begin{gather}
\big[\big(z-p^{-4}\sigma_p\big)\big(z-p^{-3}\sigma_p\big)\big(z-p^{-2}\sigma_p\big)\big(z-p^{-1}\sigma_p\big)(z-\sigma_p) \nonumber
\\ \qquad
{}- \big(z^5-p^{-10}\sigma_p^5\big)\big(z^5-p^{-5}\sigma_p^5\big)\big(z^5-\sigma_p^5\big)\big(z^5-p^{5}\sigma_p^5\big)\big(z^5-p^{10}\sigma_p^5\big) \big] G(z)=0. \label{deg-20-difference-equation}
\end{gather}
For $z=0$, we obtain the characteristic equation
\begin{gather*}
 p^{-10}x^5- x^{25} = 0,	
\end{gather*}
i.e.,
\begin{gather*}
x=\xi p^{-\frac{1}{2}},\qquad \text{for} \quad \xi^{20}=1. 	
\end{gather*}
Consider a solution of the form
\begin{gather*}
	e_{p,\xi p^{-1/2}}(z)F(z) = e_{p,\xi p^{-1/2}}(z)\sum_{n \geq 0}f_nz^n,
\end{gather*}
then $F(z)$ satisfies
\begin{gather*}
\Big[\big(z-\xi p^{-\frac{9}{2}}\sigma_p\big) \big(z-\xi p^{-\frac{7}{2}}\sigma_p\big)
\big(z-\xi p^{-\frac{5}{2}}\sigma_p\big) \big(z-\xi p^{-\frac{3}{2}}\sigma_p\big)
\big(z-\xi p^{-\frac{1}{2}} \sigma_p\big)
\\ \qquad
{}- \big(z^5-\xi^5p^{-\frac{25}{2}}\sigma_p^5\big)\big(z^5-\xi^5 p^{-\frac{15}{2}}\sigma_p^5\big) \big(z^5-\xi^5p^{-\frac{5}{2}}\sigma_p^5\big) \big(z^5-\xi^5p^{\frac{5}{2}}\sigma_p^5\big) \big(z^5-\xi^5p^{\frac{15}{2}}\sigma_p^5\big) \Big]
\\ \qquad
{}\times F(z)=0.
\end{gather*}
After a short computation, we expand the above difference equation as follows
\begin{gather}
\Big[ \xi^5p^{-\frac{25}{2}}\big(\sigma_p^{25}-\sigma_p^{5}\big)+5\xi^4p^{-8}z\sigma_p^4-10\xi^3p^{-\frac{9}{2}} z^2\sigma_p^3 + 10 \xi^2p^{-2}z^3\sigma_p^2-5\xi p^{-\frac{1}{2}}z^4\sigma_p\nonumber
\\ \qquad
{}+ z^5\big( 1-\big(1+p^{\frac{15}{2}}+p^{\frac{55}{2}}+p^{\frac{95}{2}}+p^{\frac{135}{2}} \big)\sigma_p^{20} \big) \nonumber
\\ \qquad
{}+\xi^{15}\big(p^{\frac{15}{2}}+p^{\frac{55}{2}}+2p^{\frac{95}{2}}+2p^{\frac{135}{2}} +2p^{\frac{175}{2}}+p^{\frac{215}{2}}+p^{\frac{255}{2}} \big)z^{10}\sigma_p^{15} \nonumber
\\ \qquad
{}-\xi^{10}\big(p^{10}+p^{30}+2p^{50}+2p^{70}+2p^{90}+p^{110}+p^{130} \big)z^{15}\sigma_p^{10} \nonumber
\\ \qquad
{}+ \xi^{5}\big(p^{\frac{15}{2}}+p^{\frac{55}{2}} +p^{\frac{95}{2}}+p^{\frac{135}{2}}+p^{\frac{175}{2}} \big)z^{20}\sigma_p^{5} -z^{25} \Big]F(z)=0. \label{expansion-20}
\end{gather}
Thus we arrive at the following proposition.

\begin{Proposition}
The exceptional $20$ solutions of \eqref{deg-20-difference-equation} are given as follows: for each $20$th root of unity $\xi$, we have a solution of form
\begin{gather*}
	e_{p,\xi p^{-1/2}}(z)F(z) = e_{p,\xi p^{-1/2}}(z)\sum_{n \geq 0}f_nz^n,
\end{gather*}
where $F(z)$ satisfies
\begin{gather*}
\Big[ \big(z-\xi p^{-\frac{9}{2}}\sigma_p\big)\big(z-\xi p^{-\frac{7}{2}}\sigma_p\big)\big(z-\xi p^{-\frac{5}{2}}\sigma_p\big)\big(z-\xi p^{-\frac{3}{2}}\sigma_p\big)\big(z-\xi p^{-\frac{1}{2}} \sigma_p\big)
\\ \qquad
{}- \big(z^5-\xi^5p^{-\frac{25}{2}}\sigma_p^5\big)\big(z^5-\xi^5 p^{-\frac{15}{2}}\sigma_p^5\big)\big(z^5-\xi^5p^{-\frac{5}{2}}\sigma_p^5\big) \big(z^5-\xi^5p^{\frac{5}{2}}\sigma_p^5\big) \big(z^5-\xi^5p^{\frac{15}{2}}\sigma_p^5\big) \Big]
\\ \qquad
{}\times F(z)=0.
\end{gather*}
\end{Proposition}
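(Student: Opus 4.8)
The plan is to treat this as a $q$-analogue of the Frobenius method at the regular singular point $z=0$, with the $q$-character $e_{p,\lambda}$ playing the role of the power $z^{\rho}$. First I would record the characteristic (indicial) equation of \eqref{deg-20-difference-equation} at $z=0$, namely $p^{-10}x^5-x^{25}=0$, which factors as $x^5\big(p^{-10}-x^{20}\big)=0$; its $20$ nonzero roots are exactly $x=\xi p^{-1/2}$ with $\xi^{20}=1$. For each such root I set $\lambda=\xi p^{-1/2}$ and make the ansatz $G(z)=e_{p,\lambda}(z)F(z)$ with $F(z)=\sum_{n\ge0}f_nz^n$ and $f_0=1$, the $20$ choices of $\xi$ being the candidates for the $20$ exceptional solutions.

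The heart of the argument is a single conjugation identity. The only input is the defining property $\sigma_p e_{p,\lambda}(z)=\lambda\, e_{p,\lambda}(z)$, which shows that conjugation by $e_{p,\lambda}$ is an automorphism of the algebra of $q$-difference operators fixing the multiplication operator $z$ and sending $\sigma_p\mapsto\lambda\sigma_p$; hence for any operator $L(z,\sigma_p)$ one has
\begin{gather*}
e_{p,\lambda}(z)^{-1}\,L(z,\sigma_p)\,e_{p,\lambda}(z)=L(z,\lambda\sigma_p).
\end{gather*}
Applying this factorwise to the operator in \eqref{deg-20-difference-equation}, each factor $\big(z-p^{-k}\sigma_p\big)$ becomes $\big(z-\xi p^{-k-1/2}\sigma_p\big)$ and each factor $\big(z^5-p^{c}\sigma_p^5\big)$ becomes $\big(z^5-\xi^5 p^{c-5/2}\sigma_p^5\big)$, since $\lambda^5=\xi^5p^{-5/2}$. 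Matching the exponents $k\in\{0,1,2,3,4\}$ and $c\in\{-10,-5,0,5,10\}$ reproduces verbatim the operator displayed in the Proposition. Because $e_{p,\lambda}\in\mathcal{M}(\mathbb{C}^*)$ is not identically zero, $G$ solves \eqref{deg-20-difference-equation} if and only if $F$ solves the conjugated equation, which is precisely the assertion about the form of the solution.

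It then remains to check that the conjugated equation really admits a power series $F$ of the stated shape, which is where \eqref{expansion-20} is used. Reading off the coefficient of $z^0$ in that expansion, the lowest-order part of the operator is $\xi^5p^{-25/2}\big(\sigma_p^{25}-\sigma_p^5\big)$; since $\sigma_p$ acts on $z^n$ by multiplication by $p^n$, the $z^n$-coefficient of the equation yields a recursion of the form
\begin{gather*}
\xi^5p^{-25/2}\big(p^{25n}-p^{5n}\big)f_n+(\text{a combination of }f_{n-1},\dots,f_{n-25})=0.
\end{gather*}
For every $n\ge1$ the indicial factor $p^{25n}-p^{5n}=p^{5n}\big(p^{20n}-1\big)$ is nonzero because $|p|<1$, so each $f_n$ is uniquely determined by the earlier coefficients and $F$ exists and is unique once $f_0$ is fixed. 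Finally the $20$ solutions are linearly independent, since the $q$-characters $e_{p,\xi p^{-1/2}}$ attached to distinct $20$th roots of unity lie in distinct classes modulo $p^{\mathbb{Z}}$.

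I expect the conceptual steps to be routine once the conjugation identity is in place; the only points genuinely requiring care are the bookkeeping of the half-integer powers of $p$ produced by $\lambda=\xi p^{-1/2}$ when matching the shifted factors, and the verification that the indicial factor $p^{25n}-p^{5n}$ never vanishes for $n\ge1$ — a non-degeneracy that would fail if $p$ were a root of unity and is guaranteed here only by the standing assumption $|q|<1$.
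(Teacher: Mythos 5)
Your proposal is correct and follows essentially the same route as the paper: derive the conjugated equation for $F$ from the ansatz $e_{p,\xi p^{-1/2}}(z)F(z)$ (the paper does this by direct substitution, you formalize it as conjugation by the $q$-character), then use the expansion \eqref{expansion-20} to see that the coefficient of $z^n$ gives a recursion in which the indicial factor $\xi^5p^{-25/2}\big(p^{25n}-p^{5n}\big)$ vanishes only for $n=0$, so $f_0$ is free and all higher $f_n$ are determined. Your added checks — non-vanishing of $p^{20n}-1$ from $|p|<1$, and linear independence via the distinct $p^{\mathbb{Z}}$-classes of the characters — correspond to details the paper relegates to the surrounding text and the remark following the proposition.
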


\begin{proof}
From (\ref{expansion-20}), only $f_0$ is a free variable, and $f_n$ is determined by $\{f_k \}_{k \leq n}$. Then we obtain~20 solutions.	
\end{proof}

\begin{Remark}
The exceptional solutions are linearly independent over $\mathcal{M}(\mathbb{E}_q)$, since their Wronskian matrix (see~\cite[Lemma 2.3.3]{hardouin:hal-01959879}) does not equal to 0. Actually, it is sufficient to see the Wronskian matrix for $\{e_{p,\xi^k p^{-1/2}} \}_{k=0}^{19}$ and it turns out to be the Vandermonde matrix.	
\end{Remark}

\subsection[Solutions at Q=infty]
{Solutions at $\boldsymbol{Q=\infty}$}

Let $w=1/Q$, (\ref{deg-25-difference-equation}) becomes
\begin{gather}
\Bigg[ \prod_{k=1}^{5}\big(1 - q^{-k}q^{5w\partial_w}\big) -q^{10}w q^{20w\partial_w} \big(1 - q^{w\partial_w} \big)^5 	\Bigg] F(w)=0,	
\label{deg-25-infty}
\end{gather}
which is regular singular. The characteristic equation is as follows
\begin{gather*}
\prod_{k=1}^{5}\big(1-q^{-k}x^5\big)=0,	
\end{gather*}
with 25 distinct roots
\begin{gather*}
	 q^{\frac{l}{5}}\xi^m, \qquad l=1,\dots,5, \quad	m=0,\dots,4.
\end{gather*}
Here $\xi$ is the fifth root of unity. For each root $q^{\frac{l}{5}}\xi^m$, we construct the following solution
\begin{gather*}
W_{l,m}(w) = e_{q,q^{\frac{l}{5}}\xi^m}(w) \sum_{d \geq 0} f_d w^d.	
\end{gather*}
Substituting the above formula into (\ref{deg-25-infty}), since
\begin{gather*}
q^{w\partial_w} e_{q,q^{\frac{l}{5}}\xi^m}(w)=	q^{\frac{l}{5}}\xi^m e_{q,q^{\frac{l}{5}}\xi^m}(w),
\end{gather*}
then $\sum_{d \geq 0} f_d w^d$ satisfies the following difference equation
\begin{gather*}
\Bigg[ \prod_{k=1}^{5}\big( 1 - q^{l-k}q^{5w\partial_w} \big) -q^{10+4l}w q^{20w\partial_w} \big(1 -	q^{\frac{l}{5}}\xi^m q^{w\partial_w}\big)^5 \Bigg] G(w)=0.	
\end{gather*}
{\samepage Then one obtain 25 solutions at $Q=\infty$ as follows
\begin{align*}
W_{l,m}(w) &= e_{q,q^{\frac{l}{5}}\xi^m}(w)\sum_{d \geq 0} \frac{\prod_{k=0}^{d-1}\big(1-\xi^{-m}q^{-k-\frac{l}{5}}\big)^5}{\prod_{k=0}^{5d-1} \big(1-q^{-k-l}\big) } w^d
\\
 &= e_{q,q^{\frac{l}{5}}\xi^m}(1/Q) \sum_{d\geq0}\frac{\prod_{k=0}^{d-1}\big(1-\xi^{-m}q^{-k-\frac{l}{5}}\big)^5}{\prod_{k=0}^{5d-1}\big(1-q^{-k-l}\big)}Q^{-d}. 
\end{align*}}
Since we require $|q|<1$ and
\begin{gather*}
\lim_{n \rightarrow \infty}	\frac{\prod_{k=0}^{(n+1)-1}\big(1-\xi^{-m}q^{-k-\frac{l}{5}}\big)^5}{\prod_{k=0}^{5(n+1)-1}\big(1-q^{-k-l}\big)} \frac{\prod_{k=0}^{5n-1}\big(1-q^{-k-l}\big)}{\prod_{k=0}^{n-1}\big(1-\xi^{-m}q^{-k-\frac{l}{5}}\big)^5} =\lim_{n \rightarrow \infty} \frac{\big(1-\xi^{-m}q^{-n-\frac{l}{5}}\big)^5}{\prod_{k=5n}^{5n+4}\big(1-q^{-k-l}\big)} = 0.
\end{gather*}
The 25 solutions are convergent.
\begin{Remark}
These 25 solutions may relate to K-theoretic FJRW theory, for hints, see \cite{GDZ21}.	
\end{Remark}

\section[Auxiliary q-series and analytic continuation]{Auxiliary $\boldsymbol q$-series and analytic continuation}\label{sec4}

In this section, we construct a $K$-group valued $q$-series, which is a generalization of the series~(\ref{K-I-function}), and it satisfies a difference equation like (\ref{deg-25-difference-equation}). Besides, we use Mellin--Barnes--Watson method to relate the solutions at $Q=0$ and $Q=\infty$.

\subsection[Auxiliary q-series]{Auxiliary $\boldsymbol q$-series}

We construct the following $K\big(\mathbb{P}^{n-1}\big)$ valued $q$-series motivated by \cite{CR13}:
\begin{gather}
F_{m,n}(\vec{\alpha},Q)=P^{l_q(Q)}\sum_{d=0}^{\infty} \frac{\prod_{i=1}^{m}(P{\alpha_i};q)_d}{(Pq;q)_d^n} Q^d, \qquad m \geq n. \label{F_{m,n}}
\end{gather}
Since $(1-P)^n=0$ in $K\big(\mathbb{P}^{n-1}\big)$, then it satisfies the following difference equation:
\begin{gather}
\Bigg[\big(1-q^{Q\partial_Q}\big)^n-Q\prod_{i=1}^{m}\big(1-{\alpha_i}q^{Q\partial_Q}\big) \Bigg] F_{m,n}(Q)=0 \mod\big((1-P)^n\big). \label{aux-dif-F_{m,n}}
\end{gather}
Suppose $\alpha_i \notin \alpha_j q^{\mathbb{Z}\backslash \{0\}}$, i.e., the difference equation is ($q$-)non-resonant, the characteristic equation at $Q=\infty$ is as follows
\begin{gather*}
\prod_{i=1}^m\big(1-\alpha^{-1}_ix\big)=0,	
\end{gather*}
with $m$-distinct roots $\{\alpha_i \}_{i=1}^m$, the same as the discussion in previous section, for each root $ \alpha_i$, we construct a solution as follows
\begin{gather*}
W_i(1/Q) = e_{q,\alpha_i}(1/Q) \sum_{d \geq 0} f_d Q^{-d}.	
\end{gather*}
Recall
\begin{gather*}
	q^{Q\partial_Q} e_{q,\alpha_i}(1/Q) = \alpha_i^{-1} e_{q,\alpha_i}(1/Q),
\end{gather*}
then $ \sum_{d \geq 0} f_d Q^{-d}	 $ satisfies the following difference equation
\begin{gather*}
\Bigg[\big(1-\alpha_i^{-1}q^{Q\partial_Q}\big)^n-Q\prod_{j=1}^{m}\big(1-{\alpha_j}/\alpha_i q^{Q\partial_Q}\big)\Bigg] G\big(Q^{-1}\big) = 0.
\end{gather*}
Thus, we obtain
\begin{align*}
W_j(1/Q) ={}&	e_{q,\alpha_j}(1/Q) \sum_{d \geq 0} \frac{\prod_{k=0}^{d-1}(1-\alpha_j q^k )^n q^{(m-n)d(d-1)/2} }{\prod_{i=1}^m \prod_{k=1}^{d} (1- \alpha_{j}/\alpha_{i} q^{k}) }
\\
&\times
\Bigg((-1)^{m-n} \Bigg(\prod_{i=1}^m\alpha_j/\alpha_i\Bigg) \alpha_j^{-n} q^m /Q \Bigg)^d
\\
={}&e_{q,\alpha_j}(1/Q) \sum_{d \geq 0} \frac{\prod_{k=0}^{d-1}\big(1-\alpha_j^{-1}q^{-k} \big)^n }{\prod_{i=1}^m \prod_{k=1}^{d} \big(1-\alpha_{i}/\alpha_j q^{-k}\big) } Q^{-d}.
\end{align*}

\begin{Remark}
The above solutions are linearly independent over $\mathcal{M}(\mathbb{E}_q)$. The same reason as Remark~\ref{remark3.2}.
\end{Remark}

\begin{Remark}
If we take $n=5$, $m=25$ and $\{ \alpha_i \}_{i=1}^{25}=\big\{ \xi^l q^{\frac{k}{5}} \mid k,l=1,2,3,4,5 \big\}$, then
\begin{gather*}
F_{25,5}\big(\big\{\xi^l q^{\frac{k}{5}} \big\},Q\big) = P^{l_q(Q)} \sum_{d \geq 0} \frac{\prod_{k=1}^{5d}\big(1-P^5q^k\big)}{\prod_{k=1}^{d}\big(1-Pq^k\big)^5}Q^d,
\end{gather*}
and for $\alpha_j=q^{\frac{l}{5}\xi^m}$, we have
\begin{align*}
W_j(1/Q) &= e_{q,q^{\frac{l}{5}}\xi^m}(1/Q)	 \frac{\prod_{k=0}^{d-1}\big(1-q^{-\frac{l}{5}}\xi^{-m}q^{-k} \big)^n }{ \prod_{k=1}^{d} \big(1-q^{1-l} q^{-5k}\big)\cdots \big(1-q^{5-l} q^{-5k}\big) } Q^{-d}
\\
 &= e_{q,q^{\frac{l}{5}}\xi^m}(1/Q) \sum_{d\geq0}\frac{\prod_{k=0}^{d-1}\big(1-\xi^{-m}q^{-k-\frac{l}{5}}\big)^5}{\prod_{k=0}^{5d-1}\big(1-q^{-k-l}\big)}Q^{-d} \\
 &= W_{l,m}(1/Q).
\end{align*}
\end{Remark}

\subsection{Analytic continuation}
For the sake of simplicity, we shall assume in this section that $0 < q < 1$ and write
\begin{gather*}
q={\rm e}^{-w}, \qquad w > 0.	
\end{gather*}
The results can be extended to complex $q$ in the unit disc using analytic continuation.

Consider the following contour integral. We follow the argument of \cite[pp.~115--118]{GR04} to show that this integral is well defined. For $|Q|<1$, we can close the contour to the right, it equals to~(\ref{F_{m,n}}),
\begin{gather}
P^{l_q(Q)} \frac{\prod_{i=1}^m(P\alpha_i;q)_\infty}{(Pq;q)_{\infty}^n} \int_{C} \frac{\big(Pq^{s+1};q\big)_{\infty}^n}{\prod_{i=1}^m(P\alpha_i q^s;q)_\infty} \frac{\pi (-Q)^s}{\sin \pi s} \frac{{\rm d}s}{-2\pi {\rm i}}. 	\label{counter-integral}
\end{gather}
Here we view $P={\rm e}^{-H}$. Although $H$ is the hyperplane class, we consider it as a formal variable valued in $\mathbb{C}$. $C$ is a curve from $-{\rm i} \infty$ to $+{\rm i} \infty$ such that only the non-negative zeros of $\sin \pi s $ lie on the right side of $C$.

By the triangle inequality,
\begin{gather*}
\big|1-\big| a\big|{\rm e}^{-\omega \operatorname{Re}(s)}\big| \leq|1-a q^{s}| \leq 1+|a| {\rm e}^{-\omega \operatorname{Re}(s)},	
\end{gather*}
we have
\begin{gather*}
\bigg| \frac{\big(Pq^{s+1};q\big)_{\infty}^n}{\prod_{i=1}^m(P\alpha_i q^s;q)_\infty} \bigg| \leq \prod_{k=0}^\infty \frac{\big(1+|P|{\rm e}^{-(k+1+\operatorname{Re}(s) w)}\big)^n}{\prod_{i=1}^m\big(1-|P\alpha_i|{\rm e}^{-(k+\operatorname{Re}(s)w)}\big)},
\end{gather*}
which is bounded on the contour~$C$. Hence the integral (\ref{counter-integral}) converges if $|\arg (-Q)|<\pi$.

Let $C_{R_+}$ be a large clockwise-oriented semicircle of radius $R$ with a center at the origin that lies to the right of~$C$. The semicircle is terminated by $C$ and bounded away from the poles. Now consider the contour integral over $C_{R_+}$ instead of~$C$.

Setting $s=R {\rm e}^{{\rm i} \theta}$, we have for $|s|<1$ that
\begin{align*}
\operatorname{Re}\bigg[\log \frac{(-Q)^{s}}{\sin \pi s}\bigg]
&=R[\cos \theta \log |Q|-\sin \theta \arg (-Q)-\pi|\sin \theta|]+O(1) \\
&\leq-R[\sin \theta \arg (-Q)+\pi|\sin \theta|]+O(1).
\end{align*}
Hence, when $|Q|<1$ and $|\arg (-Q)|<\pi-\delta$, $0<\delta<\pi$, we have
\begin{gather*}
\frac{(-Q)^{s}}{\sin \pi s}=O[\exp (-\delta R|\sin \theta|)],
\end{gather*}
as $R \rightarrow \infty$, then the integral on $C_{R_+}$ tends to zero as $R \rightarrow \infty$. Therefore, by applying Cauchy's theorem, we can prove (\ref{counter-integral}) equals to (\ref{F_{m,n}}) through tedious computation.

Similarly, if we replace the contour $C$ by a contour $C_{R_-}$ consisting of a large counterclockwise-oriented semicircle of radius $R$ with center at the origin that lies to the left of $C$. From an asymptotic formula
\begin{gather*}
\operatorname{Re}[\log (q^{s} ; q)_{\infty}]=\frac{\omega}{2}(\operatorname{Re}(s))^{2}+\frac{\omega}{2} \operatorname{Re}(s)+O(1),	
\end{gather*}
as $R \rightarrow -\infty$. Without loss of generality, we assume $P=q^{h}$, $\alpha_i=q^{a_i}$ and let $h$, $a_i$ be real numbers.

Then
\begin{gather*}
n(1+h+ \operatorname{Re}(s))^2 + n(1+h+ \operatorname{Re}(s))-\sum_{i=1}^m \big[(a_i+h+\operatorname{Re}(s))^2 + (a_i+h+ \operatorname{Re}(s))\big]
\\ \qquad
{}= 2\bigg[n(1+h)-\sum_{i=1}^m (a_i+h) \bigg] \operatorname{Re}(s) + (n-m)
\big(\operatorname{Re}(s)+ \operatorname{Re}^2(s)\big)+ \rm{const}.
\end{gather*}
Note that $q={\rm e}^{-w}$, $w >0$, then the asymptotic formula for $(q^s;q)_{\infty} $ implies that
\begin{align*}
\frac{\big(Pq^{s+1};q\big)_{\infty}^n}{\prod_{i=1}^m(P\alpha_i q^s;q)_\infty} = O\Bigg( \bigg| q^{\frac{m-n}{2}} \frac{\prod_{i=1}^m P\alpha_i}{(Pq)^n} \bigg|^{\operatorname{Re}(s)} \Bigg),
\end{align*}
when $\operatorname{Re}(s) \rightarrow -\infty$ with s bounded away from the zeros and poles.

Similarly, it can be shown that if $|Q|$ is big enough, we can close the contour the left, i.e., the integral (\ref{counter-integral}) on $C_{R_-}$ tends to zero as $R \rightarrow -\infty$. Thus, (\ref{counter-integral}) equals to the sum of residues at
\begin{gather*}
s=w^{-1}(-H+\log \alpha_j+ l \cdot2\pi {\rm i}) -k \qquad \text{and} \qquad s=-1-h,
\end{gather*}
where $j=1, \dots, m$, $l \in \mathbb{Z}$, $n,h \in \mathbb{N}$. The residue at $s=-1-h$ contains a term
\begin{gather*}
(1-P)^n	
\end{gather*}
from
\begin{gather*}
\big(Pq^{s+1};q\big)_{\infty}^n.	
\end{gather*}
And the residue at $s=w^{-1}(-H+\log \alpha_j+ l \cdot2\pi {\rm i}) -k$ is
\begin{gather*}
\operatorname{Res}_{s=w^{-1}(-H+\log \alpha_j+ l \cdot2\pi {\rm i}) -k} \frac{\big(Pq^{s+1};q\big)_{\infty}^n}{\prod_{i=1}^m(P\alpha_i q^s;q)_\infty} \frac{\pi (-Q)^s}{\sin \pi s}
\\ \qquad
{}=\frac{\big(\alpha_j^{-1}q^{1-k};q\big)_\infty^n}{\prod_{i=1,i \neq j}^{m} \big(\alpha_i/\alpha_jq^{-k};q \big)_{\infty}} \frac{(q^{k+1};q)_{\infty}}{(q,q;q)_{\infty}} (-1)^k q^{\frac{k(k+1)}{2}}
\\ \qquad \hphantom{=}
{}\times \frac{\pi (-Q)^{w^{-1}(-H+\log \alpha_j)-k} w^{-1}}{\sin \pi\big(w^{-1}(-H+\log \alpha_j)-k+w^{-1} 2l \pi {\rm i}\big) }
\exp \big\{ 2l \pi {\rm i} w^{-1} \log (-Q) \big\}.
\end{gather*}
If we sum over $k$, we obtain
\begin{gather*}
\sum_{k=0}^\infty \frac{\big(\alpha_j^{-1}q^{1-k};q\big)_\infty^n}{\prod_{i=1,i \neq j}^{m} \big(\alpha_i/\alpha_jq^{-k};q \big)_{\infty}} \frac{\big(q^{k+1};q\big)_{\infty}}{(q,q;q)_{\infty}} q^{\frac{k(k+1)}{2}} Q^{-k} 	
\\ \qquad
{}= \frac{\big(\alpha_j^{-1}q;q\big)^n_\infty}{\prod_{i=1,i\neq j}^m(\alpha_i/\alpha_j;q)_\infty (q;q)_\infty} \sum_{k=0}^\infty \frac{\big(\alpha_j^{-1};q^{-1}\big)_k^n}{\prod_{i=1}^m\big(\alpha_i/\alpha_jq^{-1};q^{-1}\big)_k} Q^{-k}
\\ \qquad
{}= \frac{\big(\alpha_j^{-1}q;q\big)^n_\infty}{\prod_{i=1,i\neq j}^m(\alpha_i/\alpha_j;q)_\infty (q;q)_\infty } {\rm e}^{-1}_{q,\alpha_j}(1/Q) W_j(1/Q).
\end{gather*}
If we sum over $l$, we have
\begin{gather*}
 \sum_{l=-\infty}^{\infty} \frac{\exp \big\{2l \pi {\rm i} w^{-1} \log (-Q) \big\}}{\sin (w^{-1}\big({-}H+\log \alpha_j)\pi + 2l\pi^2{\rm i}w^{-1}\big)}	(-Q)^{w^{-1}(-H+\log \alpha_j)} \\
\qquad{}= - \frac{w(q,q,P\alpha_j Q,q/(P\alpha_j Q);q)_\infty}{\pi(P\alpha_j, q/(P\alpha_j),Q,q/Q;q )_{\infty}},
\end{gather*}
which comes from the following lemma.

\begin{Lemma}[{\cite[equation~(4.3.9), p.~119]{GR04}}]
\begin{gather*}
 \sum_{m=-\infty}^{\infty} \operatorname{csc}\big(\alpha \pi -2 m \pi^2 {\rm i} w^{-1}\big)\operatorname{exp}\big\{ 2m\pi {\rm i} w^{-1} \operatorname{log}(-Q) \big\} (-Q)^{-\alpha}	
= \frac{w (q,q,aQ,q/(aQ);q)_{\infty}}{\pi (a,q/a,Q,q/Q;q)_{\infty}},
\end{gather*}
where $a=q^{\alpha}={\rm e}^{-w\alpha}$.
\end{Lemma}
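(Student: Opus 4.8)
The plan is to recognize the right-hand side as a quotient of Jacobi theta functions and then to prove the identity by a Liouville-type (quasi-periodicity) argument in the variable $\alpha$, with $a=q^{\alpha}={\rm e}^{-w\alpha}$ and $w=-\log q>0$ fixed. First I would apply the Jacobi triple product in the form $(x;q)_\infty (q/x;q)_\infty = \theta_q(-x)/(q;q)_\infty$ to each of the three pairs on the right, turning the right-hand side into
\begin{gather*}
\frac{w\,(q,q,aQ,q/(aQ);q)_\infty}{\pi\,(a,q/a,Q,q/Q;q)_\infty} = \frac{w(q;q)_\infty^3}{\pi}\,\frac{\theta_q(-aQ)}{\theta_q(-a)\,\theta_q(-Q)}.
\end{gather*}
Denote by $L(\alpha)$ the left-hand side and by $R(\alpha)$ this theta quotient, both viewed as meromorphic functions of $\alpha$ (with $Q$, $q$ fixed). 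Convergence of the bilateral series defining $L$ holds precisely on the range $|\arg(-Q)|<\pi$ already used in the excerpt, since $|\csc(\alpha\pi-2m\pi^2{\rm i}w^{-1})|$ decays like ${\rm e}^{-2\pi^2|m|/w}$ while the remaining factors grow at most like ${\rm e}^{2\pi|m|\,|\arg(-Q)|/w}$.

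Next I would check that $L$ and $R$ share the same quasi-periodicity. Under $\alpha\mapsto\alpha+2\pi{\rm i}/w$ one has $a\mapsto a$, and a reindexing $m\mapsto m-1$ of the sum (together with the factor $(-Q)^{-2\pi{\rm i}/w}$ produced by $(-Q)^{-\alpha}$, which cancels the shift ${\rm e}^{2\pi{\rm i}w^{-1}\log(-Q)}$) shows $L(\alpha+2\pi{\rm i}/w)=L(\alpha)$; on the right $R$ is manifestly a function of $a$, hence also invariant. Thus both descend to single-valued meromorphic functions of $a\in\mathbb{C}^*$. Under $\alpha\mapsto\alpha+1$ one has $a\mapsto qa$; the identity $\csc(x+\pi)=-\csc x$ gives $L(\alpha+1)=-(-Q)^{-1}L(\alpha)=Q^{-1}L(\alpha)$, while the functional equation $\theta_q(qx)=x^{-1}\theta_q(x)$ applied to $\theta_q(-aQ)$ and $\theta_q(-a)$ gives $R(\alpha+1)=Q^{-1}R(\alpha)$. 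Hence $L$ and $R$ are sections of the same line bundle: $L(qa)=Q^{-1}L(a)$ and $R(qa)=Q^{-1}R(a)$.

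Then I would match singularities. In the variable $a$ both $L$ and $R$ have only simple poles, located exactly at $a\in q^{\mathbb Z}$ (for $L$ the zeros of $\sin(\alpha\pi-2m\pi^2{\rm i}w^{-1})$; for $R$ the zeros of $\theta_q(-a)$). On the torus $\mathbb{C}^*/q^{\mathbb Z}$ this is the single point $a=1$, so it suffices to compare residues there, the matching at all other lattice points being forced by the common quasi-periodicity. For $L$ only the $m=0$ term is singular, giving $\mathrm{Res}_{\alpha=0}L=\pi^{-1}$. For $R$ the local expansion $1-a\sim w\alpha$, hence $\theta_q(-a)\sim (q;q)_\infty^3\, w\alpha$ and $\theta_q(-aQ)\to\theta_q(-Q)$ as $\alpha\to 0$, yields $\mathrm{Res}_{\alpha=0}R=\pi^{-1}$ as well. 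Consequently the difference $D:=L-R$ is holomorphic on all of $\mathbb{C}^*$ and still satisfies $D(qa)=Q^{-1}D(a)$. Expanding $D(a)=\sum_{n\in\mathbb Z}c_n a^n$, the functional equation forces $c_n(q^n-Q^{-1})=0$, so $c_n=0$ for every $n$ whenever $Q\notin q^{\mathbb Z}$; thus $D\equiv 0$ for generic $Q$, and the identity for all admissible $Q$ follows by analytic continuation in $Q$.

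The main obstacle I anticipate is not the Liouville argument itself but the analytic bookkeeping underlying it: justifying the locally uniform convergence of the bilateral $\csc$-series and the legitimacy of the reindexing $m\mapsto m-1$ that produces the first quasi-period, and carrying out the local theta expansion at $a=1$ cleanly enough to pin the residue to $\pi^{-1}$. Once these are in place, the vanishing of $D$ is immediate. Alternatively, one may simply invoke \cite[equation~(4.3.9), p.~119]{GR04}.
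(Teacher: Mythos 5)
Your proof is correct, but it differs from the paper in the most basic way possible: the paper does not prove this lemma at all --- it is imported verbatim from \cite[equation~(4.3.9), p.~119]{GR04}, which is exactly the fallback you offer in your closing sentence. Your Liouville-type argument is therefore a genuinely different, self-contained route, and each step checks out against the paper's conventions: the triple-product rewriting $R=\frac{w(q;q)_\infty^3}{\pi}\,\theta_q(-aQ)\big/\big(\theta_q(-a)\theta_q(-Q)\big)$; the descent of $L$ to a function of $a$ via the reindexing $m\mapsto m-1$; the common multiplier $Q^{-1}$ under $a\mapsto qa$, using $\csc(x+\pi)=-\csc x$ on one side and $\theta_q(qx)=x^{-1}\theta_q(x)$ (the paper's normalization $q^{Q\partial_Q}\theta_q(Q)=\theta_q(Q)/Q$) on the other; simple poles exactly on $a\in q^{\mathbb{Z}}$, with residue $1/\pi$ at $\alpha=0$ on both sides (only the $m=0$ term of $L$ is singular there, and $\theta_q(-a)\sim(q;q)_\infty^3\,w\alpha$); and the Laurent-coefficient step $c_n\big(q^n-Q^{-1}\big)=0$, valid for $Q\notin q^{\mathbb{Z}}$, with analytic continuation in $Q$ finishing the argument. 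As for what each approach buys: the citation keeps the paper lean and rests on a standard reference, while your proof makes the lemma self-contained and shows transparently where the hypotheses enter --- $|\arg(-Q)|<\pi$ is forced by convergence of the bilateral $\csc$ series, and the restriction $Q\notin q^{\mathbb{Z}}$ by invertibility of $q^n-Q^{-1}$ (equivalently, non-vanishing of $\theta_q(-Q)$). Two small points to make explicit if you write this up: residues must be compared in one fixed variable (you use $\alpha$ throughout, which is fine, since passing to $a$ rescales the residues of both sides by the same Jacobian factor), and the locally uniform decay bound on the $\csc$ series is what licenses both the termwise reindexing and the claim that $L$ is meromorphic in $a$.
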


Summing up the above discussion, we arrive at the following theorem.

\begin{Theorem} \label{Theorem}
Suppose $\alpha_i \notin \alpha_j q^{\mathbb{Z}\backslash \{0\}}$. For $m \geq n$, the $K\big(\mathbb{P}^{n-1}\big)$ valued $q$-series has the following analytic continuation:
\begin{align*}
 P^{l_q(Q)}\sum_{d=0}^{\infty} \frac{\prod_{i=1}^{m}(P{\alpha_i};q)_d}{(Pq;q)_d^n} Q^d
={}& P^{l_q(Q)} \frac{\prod_{i=1}^m(P\alpha_i;q)_\infty}{(Pq;q)_{\infty}^n} \sum_{j=1}^m \frac{(q,q,P\alpha_j Q,q/(P\alpha_j Q);q)_\infty}{(P\alpha_j, q/(P\alpha_j),Q,q/Q;q )_{\infty}}
\\
&\times\frac{\big(\alpha_j^{-1}q;q\big)^n_\infty \cdot {\rm e}^{-1}_{q,\alpha_j}(1/Q)}{\prod_{i=1,i\neq j}^m(\alpha_i/\alpha_j;q)_\infty (q;q)_\infty} W_j(1/Q).
\end{align*}
for $|Q|<1$ and $|\arg (-Q)|<\pi-\delta$, $0<\delta<\pi$.
\end{Theorem}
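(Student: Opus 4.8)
The plan is to prove the identity by the Mellin--Barnes--Watson method, first for $0<q<1$ (the general case following by analytic continuation in $q$), realizing the left-hand series as a contour integral and then deforming the contour from right to left. I would start from the integrand
\[
\frac{\big(Pq^{s+1};q\big)_{\infty}^n}{\prod_{i=1}^m(P\alpha_i q^s;q)_\infty}\,\frac{\pi (-Q)^s}{\sin \pi s},
\]
in which the factor $\pi/\sin\pi s$ contributes simple poles at every integer, with the residue at $s=k$ combining with $(-Q)^k$ to produce $Q^k$. Closing the contour $C$ of (\ref{counter-integral}) to the right and summing the residues at the nonnegative integers then reproduces exactly the series (\ref{F_{m,n}}), up to the normalizing prefactor $P^{l_q(Q)}\prod_i(P\alpha_i;q)_\infty/(Pq;q)_\infty^n$; this identifies (\ref{counter-integral}) with the left-hand side of the theorem for $|Q|<1$.

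Second, I would verify that (\ref{counter-integral}) is well defined. Using the two-sided triangle-inequality bound $\big|1-|a|{\rm e}^{-w\operatorname{Re}(s)}\big|\leq|1-aq^s|\leq 1+|a|{\rm e}^{-w\operatorname{Re}(s)}$, the quotient of infinite $q$-Pochhammer symbols is bounded on $C$, while on the large clockwise right semicircle $C_{R_+}$ the factor $(-Q)^s/\sin\pi s$ is $O[\exp(-\delta R|\sin\theta|)]$ as soon as $|Q|<1$ and $|\arg(-Q)|<\pi-\delta$. Hence the semicircular contribution vanishes as $R\to\infty$ and closing to the right is justified.

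Third, for the analytic continuation I would instead close the contour to the left along $C_{R_-}$. The crucial input is the asymptotic $\operatorname{Re}\log(q^s;q)_\infty=\tfrac{w}{2}\operatorname{Re}(s)^2+\tfrac{w}{2}\operatorname{Re}(s)+O(1)$; substituting it into the quotient of Pochhammer symbols and using $P=q^h$, $\alpha_i=q^{a_i}$ yields growth of order $\big|q^{(m-n)/2}\prod_i P\alpha_i/(Pq)^n\big|^{\operatorname{Re}(s)}$ as $\operatorname{Re}(s)\to-\infty$, the quadratic part of the exponent being $(n-m)\big(\operatorname{Re}(s)+\operatorname{Re}(s)^2\big)$. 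It is exactly the hypothesis $m\geq n$ that makes this quadratic term non-positive, so that the integrand decays and the integral over $C_{R_-}$ tends to zero. I expect this vanishing estimate to be the main obstacle, since it rests on the quadratic bookkeeping above and on the sign condition supplied by $m\geq n$; without that inequality the left semicircle would not be negligible.

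Fourth, I would collect the residues captured on the left. They occur at $s=-1-h$, whose residue carries a factor $(1-P)^n$ coming from $\big(Pq^{s+1};q\big)_{\infty}^n$ and therefore vanishes modulo $(1-P)^n$, and at the poles $s=w^{-1}(-H+\log\alpha_j+2\pi{\rm i}\,l)-k$ arising from the zeros of the denominator. Summing the residues over $k\geq 0$ and simplifying the $q$-Pochhammer ratios reassembles the factor $\big(\alpha_j^{-1}q;q\big)^n_\infty\,{\rm e}^{-1}_{q,\alpha_j}(1/Q)\,W_j(1/Q)\big/\big[\prod_{i\neq j}(\alpha_i/\alpha_j;q)_\infty\,(q;q)_\infty\big]$, with $W_j$ the solution at $Q=\infty$ constructed in the previous subsection. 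Summing the remaining $l$-dependence, a cosecant series in $\exp\{2l\pi{\rm i}\,w^{-1}\log(-Q)\}$, I would invoke the Watson-type identity recalled above (\cite[equation~(4.3.9)]{GR04}) to convert it into the theta quotient $(q,q,P\alpha_j Q,q/(P\alpha_j Q);q)_\infty\big/(P\alpha_j,q/(P\alpha_j),Q,q/Q;q)_\infty$. Multiplying the three pieces and restoring the overall prefactor yields the stated formula on the domain $|Q|<1$, $|\arg(-Q)|<\pi-\delta$.
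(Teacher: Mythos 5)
Your proposal follows the paper's own proof essentially step for step: the same Mellin--Barnes integral \eqref{counter-integral}, the same bound on the contour $C$ via the triangle inequality, the same right-closing over $C_{R_+}$ identifying the integral with the series \eqref{F_{m,n}}, the same left-closing with residues at $s=-1-h$ (killed modulo $(1-P)^n$) and at $s=w^{-1}(-H+\log\alpha_j+2\pi{\rm i}\,l)-k$, the same $k$-summation reassembling ${\rm e}^{-1}_{q,\alpha_j}(1/Q)\,W_j(1/Q)$, and the same cosecant-summation lemma \cite[equation~(4.3.9)]{GR04} for the $l$-sum. There is, however, one genuine gap, and it sits exactly at the step you yourself flag as the main obstacle.

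You claim that the hypothesis $m\geq n$ makes the quadratic exponent $(n-m)\big(\operatorname{Re}(s)+\operatorname{Re}^2(s)\big)$ non-positive, ``so that the integrand decays and the integral over $C_{R_-}$ tends to zero,'' and you then conclude the identity directly on the domain $|Q|<1$. Non-positivity is not decay: in the boundary case $m=n$ the quadratic term vanishes identically, the Pochhammer quotient then behaves like $\big|\prod_{i}\alpha_i q^{-n}\big|^{\operatorname{Re}(s)}$, and against the factor $|(-Q)^s|\approx |Q|^{\operatorname{Re}(s)}$ the integrand has size $\big|Q\prod_i \alpha_i q^{-n}\big|^{\operatorname{Re}(s)}$, which blows up as $\operatorname{Re}(s)\to-\infty$ whenever $\big|Q\prod_i\alpha_i q^{-n}\big|<1$ --- in particular for small $|Q|$. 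So for $m=n$ you cannot close the contour to the left on the whole disc $|Q|<1$; and $m=n$ is not a marginal case, since it is precisely the fuchsian specialization $n=m=4$, $\alpha_i=q^{i/5}$ exploited in Section~\ref{sec5}. The paper avoids this by closing to the left only when $|Q|$ is big enough, so that the integral equals the residue sum there, and then transporting the identity back: the integral \eqref{counter-integral} and the residue sum are both analytic on the cut domain $|\arg(-Q)|<\pi-\delta$, the integral equals the series \eqref{F_{m,n}} for $|Q|<1$, and uniqueness of analytic continuation in $Q$ yields the stated equality --- this two-regime argument is what the phrase ``analytic continuation'' in the statement refers to. Your argument is complete for $m>n$, where the strictly negative Gaussian term does force decay on $C_{R_-}$ for every $Q$; to repair the case $m=n$, replace the claimed decay on $|Q|<1$ by the ``$|Q|$ large, then continue analytically'' step.
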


\begin{Remark}In general, the above formula only contains a part of solutions at $Q=0$.	
\end{Remark}

\section{A special fuchsian case}\label{sec5}
Consider the following difference equation:
\begin{gather}
\Big[\big(1-q^{Q\partial_Q}\big)^4-Q\big(1-q^{\frac{1}{5}}q^{Q\partial_Q}\big) \big(1-q^{\frac{2}{5}}q^{Q\partial_Q}\big)\big(1-q^{\frac{3}{5}}q^{Q\partial_Q}\big)
\big(1-q^{\frac{4}{5}}q^{Q\partial_Q}\big) \Big] F(Q) = 0.
\label{special-fuchsian-case}
\end{gather}
By definition, it is fuchsian. One could easily construct the solutions at $Q=\infty$, indeed, let $w=1/Q$, then (\ref{special-fuchsian-case}) becomes
\begin{gather*}
\Big[ \big(1-q^{-\frac{1}{5}}q^{w\partial_w}\big)\big(1-q^{-\frac{2}{5}}q^{w\partial_w}\big) \big(1-q^{-\frac{3}{5}}q^{w\partial_w}\big)\big(1-q^{-\frac{4}{5}}q^{w\partial_w}\big) - q^2w\big(1-q^{w \partial _w }\big)^4 \Big] G(w)=0.	
\end{gather*}
The characteristic equation of the above difference equation at $w=0$ is
\begin{gather*}
\big(1-q^{-\frac{1}{5}}x\big)\big(1-q^{-\frac{2}{5}}x\big)\big(1-q^{-\frac{3}{5}}x\big) \big(1-q^{-\frac{4}{5}}x\big) = 0,
\end{gather*}
with 4 different roots
\begin{gather*}
 x= q^{\frac{i}{5}}, \qquad i=1,2,3,4.	
\end{gather*}
So the difference equation is non-resonant. By using Frobenius method, we could construct solutions of the form
\begin{gather*}
W_i(w)= e_{q,q^{\frac{i}{5}}}(w) \widetilde{W}_i(w) = e_{q,q^{\frac{i}{5}}}(w) \sum_{n=0}^{\infty} g^i_n w^n .
\end{gather*}
After a short computation one obtain four solutions as follows
\begin{gather}
	W_1(1/Q) = e_{q,q^{\frac{1}{5}}}(1/Q){_4}\phi_3\big(q^{\frac{1}{5}},q^{\frac{1}{5}},q^{\frac{1}{5}},q^{\frac{1}{5}}; q^{\frac{4}{5}}, q^{\frac{3}{5}},q^{\frac{2}{5}};q; q^2/Q \big),
\label{fuchsian-case-infty-1}
\\
	W_2(1/Q) = e_{q,q^{\frac{2}{5}}}(1/Q){_4}\phi_3\big(q^{\frac{2}{5}},q^{\frac{2}{5}},q^{\frac{2}{5}},q^{\frac{2}{5}}; q^{\frac{6}{5}}, q^{\frac{4}{5}},q^{\frac{3}{5}};q; q^2/Q\big),
\label{fuchsian-case-infty-2}
\\
	W_3(1/Q) = e_{q,q^{\frac{3}{5}}}(1/Q){_4}\phi_3\big(q^{\frac{3}{5}},q^{\frac{3}{5}},q^{\frac{3}{5}},q^{\frac{3}{5}}; q^{\frac{7}{5}}, q^{\frac{6}{5}},q^{\frac{4}{5}};q; q^2/Q \big),
\label{fuchsian-case-infty-3}
\\
	W_4(1/Q)= e_{q,q^{\frac{4}{5}}}(1/Q){_4}\phi_3\big(q^{\frac{4}{5}},q^{\frac{4}{5}},q^{\frac{4}{5}},q^{\frac{4}{5}}; q^{\frac{8}{5}}, q^{\frac{7}{5}},q^{\frac{6}{5}};q; q^2/Q \big).
\label{fuchsian-case-infty-4}
\end{gather}

\begin{Remark}
By a small trick, we could find the formula for $\widetilde{W}_i(w)$	 easily. Note that, for $\alpha \notin \mathbb{N}$, we have
\begin{gather*}
\big(1-q^{w \partial_w}\big)^k 	e_{q,q^{\alpha}}(w)\widetilde{W}(w) = e_{q,q^{\alpha}}(w) \big(1-q^\alpha \cdot q^{w \partial_w}\big)^k \widetilde{W}(w).
\end{gather*}
For example, $\widetilde{W}_1(w) $ satisfies the following difference equation
\begin{gather*}
\Big[ \big(1-q^{w\partial_w}\big)\big(1-q^{-\frac{1}{5}}q^{w\partial_w}\big) \big(1-q^{-\frac{2}{5}}q^{w\partial_w}\big)\big(1-q^{-\frac{3}{5}}q^{w\partial_w}\big) - q^2w\big(1-q^{\frac{1}{5}}q^{w \partial _w }\big)^4 \Big] \widetilde{W}_1(w) =0.
\end{gather*}
From the above explicit form, it's quite easy to find a $q$-hypergeometric series representation for~$\widetilde{W}_1(w)$.
\end{Remark}

Let's consider the solutions of (\ref{special-fuchsian-case}) at $Q=0$, the characteristic equation is as follows
\begin{gather*}
(1-x)^4=0.	
\end{gather*}
From the general theorem (see \cite{Adams1931, RW21} for details), we have solutions of the form
\begin{gather}
G_i(Q)=l_q(Q)G_{i-1}(Q) + g_i(Q), \qquad i=2,3,4	 \label{second-solution-Q=0},
\end{gather}
where
\begin{gather}\label{first-solution-Q=0}
G_1(Q)= \sum_{d=0}^{\infty} \frac{\prod_{i=1}^4\big(q^{\frac{i}{5}};q\big)_d}{(q;q)^4_d}	 Q^d = {_4}\phi_3\big(q^{\frac{1}{5}},q^{\frac{2}{5}},q^{\frac{3}{5}},q^{\frac{4}{5}}; q, q,q;q; Q \big)
\end{gather}
is a solution of (\ref{special-fuchsian-case}) and $g_i(Q)$ are power series.

For general $q$-hypergeometric function $ {_4}\phi _3 (a_1,a_2,a_3,a_4;b_1,b_2,b_3;q;z) $, we have the following famous transformation formula.

\begin{Proposition}[{\cite[equation~(4.5.2), p.~120]{GR04}}]
\begin{gather}
{_4}\phi _3 (a_1,a_2,a_3,a_4;b_1,b_2,b_3;q;z)\nonumber
\\ \qquad
{}= \frac{(a_2,a_3,a_4,b_1/a_1,b_2/a_1,b_3/a_1,a_1z,q/a_1z;q)_\infty} {(b_1,b_2,b_3,a_2/a_1,a_3/a_1,a_4/a_1,z,q/z;q )_\infty}\nonumber
\\ \qquad\hphantom{=}
 {}\times {_4}\phi_3\bigg(a_1,a_1q/b_1,a_1q/b_2,a_1q/b_3; a_1q/a_2,a_1q/a_3,a_1q/a_4;q; \frac{qb_1b_2b_3}{za_1a_2a_3a_4}\bigg) \nonumber
 \\ \qquad\hphantom{=}
{}+ \operatorname{idem}(a_1,a_2,a_3,a_4). \label{4-phi-3-indentity}
\end{gather}
The symbol `` $\operatorname{idem}(a_1,a_2,a_3,a_4)$'' after an expression stands for the sum of the $3$ expressions obtained from the preceding expression by interchanging $a_1$ with $a_k$, $k=2,3,4$.
\end{Proposition}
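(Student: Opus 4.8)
The plan is to treat this identity as a generic-parameter version of the Mellin--Barnes--Watson computation already carried out in Section~\ref{sec4}: the single infinite-product coefficient that multiplies each $W_j(1/Q)$ in Theorem~\ref{Theorem} is to be replaced here by the hypergeometric normalizations displayed on the right. Writing $q={\rm e}^{-w}$ with $w>0$, first I would introduce the balanced $q$-Barnes integral
\begin{gather*}
J=\int_C \frac{\big(q^{s+1},b_1q^s,b_2q^s,b_3q^s;q\big)_\infty}{\big(a_1q^s,a_2q^s,a_3q^s,a_4q^s;q\big)_\infty}\,\frac{\pi(-z)^s}{\sin\pi s}\,\frac{{\rm d}s}{-2\pi{\rm i}},
\end{gather*}
where $C$ runs from $-{\rm i}\infty$ to $+{\rm i}\infty$ separating the poles of $1/\sin\pi s$ at $s\in\mathbb{Z}_{\geq 0}$ from the poles produced by the denominator $q$-Pochhammer symbols. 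As in Section~\ref{sec4} the ratio of $q$-Pochhammer factors is balanced (four over four), so the asymptotic estimate $\operatorname{Re}[\log(q^s;q)_\infty]=\frac{w}{2}(\operatorname{Re}s)^2+\frac{w}{2}\operatorname{Re}s+O(1)$ used there controls the integrand and the arc contributions vanish on both sides for $|z|$ in a suitable range; the result then extends to all $z$ and to generic $a_i$ (so that every pole is simple) by analytic continuation.

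Next I would evaluate $J$ in two ways. Closing $C$ to the right collects the residues at $s=0,1,2,\dots$; since $\operatorname{Res}_{s=n}\frac{\pi}{\sin\pi s}=(-1)^n$ and $\big(q^{n+1},b_kq^n;q\big)_\infty$, $(a_jq^n;q)_\infty$ reduce to finite $q$-Pochhammer symbols, this sum rebuilds $c\cdot{}_4\phi_3(a_1,a_2,a_3,a_4;b_1,b_2,b_3;q;z)$, where $c=\frac{(q;q)_\infty(b_1,b_2,b_3;q)_\infty}{(a_1,a_2,a_3,a_4;q)_\infty}$ is the overall constant. Closing $C$ to the left instead collects, for each $j=1,2,3,4$, the doubly-indexed family of poles $s=w^{-1}\log a_j-k-2\pi{\rm i}\,l/w$ with $k\geq 0$ and $l\in\mathbb{Z}$. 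Summing the residues over $k$ reassembles a ${}_4\phi_3$ in the shifted argument $qb_1b_2b_3/(za_1a_2a_3a_4)$ together with the product prefactor in $a_i/a_1$ and $b_i/a_1$, exactly as the $k$-sum in Section~\ref{sec4} produced $W_j$; summing over $l$ and invoking the $\csc$ summation lemma \cite[equation~(4.3.9)]{GR04} contributes the factor $(a_1z,q/a_1z;q)_\infty/(z,q/z;q)_\infty$ and the $(q,q;q)_\infty$ terms. Equating this left-closure residue sum with the right-closure value $c\cdot{}_4\phi_3(\dots)$ and dividing by the common constant $c$ turns the $j=1$ contribution into the explicit term of the statement and the $j=2,3,4$ contributions into $\operatorname{idem}(a_1,a_2,a_3,a_4)$, the four-fold symmetry being automatic since $J$ does not distinguish the four denominator parameters.

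The main obstacle is the residue bookkeeping in the left closure: one must evaluate the residue of a product of four infinite $q$-Pochhammer symbols at $s=w^{-1}\log a_1-k$, repeatedly applying $(x;q)_\infty=(1-x)(xq;q)_\infty$ to peel off the vanishing factor, then re-index the $k$-sum into a genuine ${}_4\phi_3$ and match every infinite product against the target prefactor $\frac{(a_2,a_3,a_4,b_1/a_1,b_2/a_1,b_3/a_1,a_1z,q/a_1z;q)_\infty}{(b_1,b_2,b_3,a_2/a_1,a_3/a_1,a_4/a_1,z,q/z;q)_\infty}$. This is precisely the tedious computation indicated in Section~\ref{sec4}, and aside from tracking powers of $q$ and signs it is routine; the genericity assumption $a_i/a_j\notin q^{\mathbb{Z}}$ is exactly what keeps all the poles simple, so that these residues are unambiguous.
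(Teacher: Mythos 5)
Your proposal is correct and is essentially the same approach as the paper's: the paper itself gives no proof of this proposition (it is quoted directly from \cite[equation~(4.5.2), p.~120]{GR04}), but the Mellin--Barnes--Watson contour argument you outline --- closing the Barnes-type integral to the right to recover $c\cdot{}_4\phi_3(a_1,a_2,a_3,a_4;b_1,b_2,b_3;q;z)$, closing it to the left and summing residues over $k$ (rebuilding the ${}_4\phi_3$ in the argument $qb_1b_2b_3/(za_1a_2a_3a_4)$) and over $l$ (via the $\csc$ summation lemma \cite[equation~(4.3.9)]{GR04}) --- is exactly how the cited source derives the identity, and it is the same technique the paper uses to prove its own $K$-theoretic generalization, Theorem~\ref{Theorem} in Section~\ref{sec4}. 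The residue bookkeeping you defer as routine corresponds to the ``tedious computation'' the paper likewise leaves implicit, and your genericity condition ($a_i/a_j\notin q^{\mathbb{Z}}$, so all poles are simple) correctly mirrors the paper's non-resonance hypothesis $\alpha_i\notin\alpha_jq^{\mathbb{Z}\setminus\{0\}}$.
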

So in our special case, (\ref{first-solution-Q=0}) can be written as a combination of (\ref{fuchsian-case-infty-1})--(\ref{fuchsian-case-infty-4}). As mentioned before, the other solutions has the form of (\ref{second-solution-Q=0}) which is hard to compute. Thus, it's very hard to find the connection matrix.

\subsection{Connection matrix}\label{section5.1}
Notice that (\ref{special-fuchsian-case}) is a special case of (\ref{aux-dif-F_{m,n}}) with $n=m=4$ and
\begin{gather*}
\alpha_i = q^{\frac{i}{5}}, \qquad i=1,2,3,4,	
\end{gather*}
then
\begin{gather}
F_{4,4}\big(\big\{q^{\frac{i}{5}}\big\}_{i=1}^4,Q\big) = P^{l_q(Q)}\sum_{d=0}^{\infty} \frac{\prod_{i=1}^{4}\big(Pq^{\frac{i}{5}};q\big)_d}{(Pq;q)_d^4} Q^d. \label{auxiliary-q-series}
\end{gather}
The solutions of difference equation (\ref{special-fuchsian-case}) at $Q=0$ are given by the expansion of $F_{4,4}\big(\big\{q^{\frac{i}{5}}\big\}_{i=1}^4,Q\big)$ with respect to $(1-P)^i$, $i=0,1,2,3$. From Theorem \ref{Theorem}, then we have

\begin{Corollary} 
\begin{align}
 P^{l_q(Q)}\sum_{d=0}^{\infty} \frac{\prod_{i=1}^{4}\big(Pq^{\frac{i}{5}};q\big)_d}{(Pq;q)_d^4} Q^d
{}&= P^{l_q(Q)} \frac{\prod_{i=1}^4\big(Pq^{\frac{i}{5}};q\big)_\infty}{(Pq;q)_{\infty}^4} \sum_{j=1}^4 \frac{\big(q,q,Pq^{\frac{j}{5}} Q,q/\big(Pq^{\frac{j}{5}} Q\big);q\big)_\infty}\nonumber
\\
&\times{\big(Pq^{\frac{j}{5}}, q/\big(Pq^{\frac{j}{5}}\big),Q,q/Q;q \big)_{\infty}} \frac{\big(q^{-\frac{j}{5}}q;q\big)^4_\infty {\rm e}^{-1}_{q,q^{j/5}}(1/Q) }{\prod_{i=1,i\neq j}^4\big(q^{\frac{i-j}{5}};q\big)_\infty (q;q)_\infty }\nonumber
\\
&\times W_j(1/Q). 	\label{Corollary-2-formula}
\end{align}
\end{Corollary}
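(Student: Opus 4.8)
The plan is to obtain this statement as a direct specialization of Theorem~\ref{Theorem}, exploiting the observation already recorded before~(\ref{auxiliary-q-series}) that the series in~(\ref{auxiliary-q-series}) is precisely the auxiliary $q$-series $F_{m,n}(\vec{\alpha},Q)$ of~(\ref{F_{m,n}}) with $m=n=4$ and $\alpha_i=q^{i/5}$, $i=1,2,3,4$. Consequently no new analytic continuation argument is needed; the full content reduces to verifying that the hypotheses of Theorem~\ref{Theorem} are met for this parameter choice and then carrying out the substitution termwise.

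First I would check the non-resonance hypothesis $\alpha_i\notin\alpha_j q^{\mathbb{Z}\setminus\{0\}}$. For the chosen parameters one has $\alpha_i/\alpha_j=q^{(i-j)/5}$, and for distinct $i,j\in\{1,2,3,4\}$ the exponent $(i-j)/5$ ranges over $\{\pm 1/5,\pm 2/5,\pm 3/5\}$, none of which is a nonzero integer; hence $\alpha_i/\alpha_j\notin q^{\mathbb{Z}\setminus\{0\}}$, recovering the non-resonance already noted after~(\ref{special-fuchsian-case}). The remaining hypothesis $m\geq n$ holds trivially since $m=n=4$.

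Next I would substitute $\alpha_i=q^{i/5}$ into the right-hand side of Theorem~\ref{Theorem}, replacing $P\alpha_i$ by $Pq^{i/5}$ in the infinite products, $\alpha_j^{-1}q$ by $q^{-j/5}q$ in the factor $(\alpha_j^{-1}q;q)_\infty^n$, the ratios $\alpha_i/\alpha_j$ by $q^{(i-j)/5}$ in the denominator product, and $e^{-1}_{q,\alpha_j}$ by $e^{-1}_{q,q^{j/5}}$; reading off the result gives exactly~(\ref{Corollary-2-formula}). A small consistency point worth recording is that the $W_j(1/Q)$ appearing on the right are the fundamental solutions at $Q=\infty$: specializing the general expression for $W_j(1/Q)$ derived in Section~\ref{sec4} to $\alpha_i=q^{i/5}$ must agree with the explicit $\,{}_4\phi_3$ forms~(\ref{fuchsian-case-infty-1})--(\ref{fuchsian-case-infty-4}), which follows because both solve the same $Q=\infty$ difference equation with identical leading $q$-character $e_{q,q^{j/5}}$ and the construction is non-resonant with distinct roots, so the normalized Frobenius solution is unique.

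Since Theorem~\ref{Theorem} supplies all the analytic machinery, there is no genuine obstacle; the only step demanding care is the bookkeeping of matching each specialized Pochhammer factor and the $q$-character against its counterpart in~(\ref{Corollary-2-formula}), together with the trivial observation that the convergence restrictions $|Q|<1$ and $|\arg(-Q)|<\pi-\delta$, $0<\delta<\pi$, inherited from Theorem~\ref{Theorem} are simply carried over unchanged.
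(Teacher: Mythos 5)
Your proposal is correct and matches the paper's own route: the paper obtains this Corollary precisely by observing that~(\ref{auxiliary-q-series}) is the case $m=n=4$, $\alpha_i=q^{i/5}$ of the auxiliary series~(\ref{F_{m,n}}) and then specializing Theorem~\ref{Theorem}, exactly as you do. Your additional checks (non-resonance $q^{(i-j)/5}\notin q^{\mathbb{Z}\setminus\{0\}}$ and the identification of the specialized $W_j$ with the ${}_4\phi_3$ solutions~(\ref{fuchsian-case-infty-1})--(\ref{fuchsian-case-infty-4})) are details the paper leaves implicit, but they do not change the argument.
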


\begin{Remark}
Taking $P=1$ in the above formula, we obtain
\begin{align*}
\sum_{d=0}^{\infty} \frac{\prod_{i=1}^{4}\big(q^{\frac{i}{5}};q\big)_d}{(q;q)_d^4} Q^d
={}&\sum_{k=1}^{4} \frac{\big(q^{\frac{1}{5}} \cdots \hat{k} \cdots q^{\frac{4}{5}},q^{1-\frac{k}{5}},q^{1-\frac{k}{5}},q^{1-\frac{k}{5}};q\big)_{\infty}} {\big(q^{\frac{1-k}{5}},\dots \hat{k} \dots, q^{\frac{4-k}{5}},q,q,q;q \big)_{\infty}} \frac{\big(q^{\frac{k}{5}}Q;q^{\frac{5-k}{5}}/Q\big)_\infty}{(Q,q/Q)_\infty}
\\
& \times {_4}\phi_3\big(q^{\frac{k}{5}},q^{\frac{k}{5}},q^{\frac{k}{5}},q^{\frac{k-1}{5}+1}; q^{\frac{k-2}{5}+1},\dots \hat{k} \dots, q^{\frac{k-4}{5}+1};q; q^2/Q\big).
\end{align*}	
It agrees with (\ref{4-phi-3-indentity}).
\end{Remark}

{\sloppy In order to obtain the connection matrix, we need to expand (\ref{Corollary-2-formula}) with respect to \mbox{$\{ (1-P)^k \}^3_{k=1}$}. Notice that
\begin{gather*}
P^{l_q(Q)} = (1-(1-P))^{l_q(Q)} = \sum_{k \geq 0} (-1)^k \binom{\ell_{q}(Q)}{k}	(1-P)^k,
\end{gather*}}
where
\begin{gather*}
	\binom{\ell_{q}(Q)}{k}=\frac{1}{k !} \prod_{r=0}^{k-1}(\ell_{q}(Q)-r).
\end{gather*}
Then
\begin{gather*}
P^{l_q(Q)}\sum_{d=0}^{\infty} \frac{\prod_{i=1}^{4}\big(Pq^{\frac{i}{5}};q\big)_d}{(Pq;q)_d^4} Q^d = \sum_{m = 0}^3 \sum_{a+b=m} (-1)^a \binom{\ell_{q}(Q)}{a} X_b(q,Q) (1-P)^m,	 
\end{gather*}
where $X_b(q,Q)$ is the coefficient of
\begin{gather}
\sum_{d=0}^{\infty} \frac{\prod_{i=1}^{4}\big(Pq^{\frac{i}{5}};q\big)_d}{(Pq;q)_d^4} Q^d	
= \sum_{b = 0}^3 X_b(q,Q) (1-P)^b. \label{X_b}
\end{gather}

Let's consider the expansion of $(Pq^{\alpha};q)_\infty $ and $\big(P^{-1}q^{\alpha};q\big)_\infty $. By definition
\begin{gather*}
(Pq^{\alpha};q)_\infty = \prod_{k=0}	^{\infty} \big(1-Pq^{\alpha+k}\big).
\end{gather*}
For $(1-Pq^{\alpha+k})$, we have
\begin{gather*}
	\big(1-Pq^{\alpha+k}\big)= \big(1-q^{\alpha+k}+q^{\alpha+k}(1-P)\big)=\big(1-q^{\alpha+k}\big)\bigg(1+ \frac{q^{\alpha+k}}{1-q^{\alpha+k}}(1-P) \bigg).
\end{gather*}
Then
\begin{gather*}
\prod_{k=0}^{\infty} \big(1-q^{\alpha+k}\big)\bigg(1+ \frac{q^{\alpha+k}}{1-q^{\alpha+k}}(1-P) \bigg)
\\ \qquad
{}= (q^\alpha;q)_{\infty} \prod_{k=0}^{\infty} \Bigg[ 1 + \sum_{k=0}^{\infty}\frac{q^{\alpha+k}}{1-q^{\alpha+k}} (1-P) + \sum_{i<j}^{\infty} \frac{q^{2\alpha +i+j}}{(1-q^{\alpha+i})(1-q^{\alpha+j})} (1-P)^2
\\ \qquad\hphantom{=}
{}+ \sum_{i<j<l}^{\infty}\frac{q^{3\alpha +i+j+l}}{(1-q^{\alpha+i})(1-q^{\alpha+j})(1-q^{\alpha+l})} (1-P)^3 + O\big((1-P)^4\big) \Bigg].	
\end{gather*}
Similarly,
\begin{align*}
\big(1-P^{-1}q^{\alpha+k}\big) ={}& 1-\frac{q^{\alpha+k}}{1-(1-P)}
=\big(1-q^{\alpha+k}\big)-q^{\alpha+k}(1-P) - q^{\alpha+k}(1-P)^2
\\
&-q^{\alpha+k}(1-P)^3 + O\big((1-P)^4\big).	
\end{align*}
 Then
\begin{gather*}
\prod_{k=0}^{\infty}\big(\big(1-q^{\alpha+k}\big)-q^{\alpha+k}(1-P)- q^{\alpha+k}(1-P)^2-q^{\alpha+k}(1-P)^3 + O\big((1-P)^4\big) \big)
\\ \qquad
{}= (q^\alpha;q)_{\infty} \prod_{k=0}^{\infty} \Bigg[ 1 - \sum_{k=0}^{\infty}\frac{q^{\alpha+k}}{1-q^{\alpha+k}} (1-P)
\\ \qquad\hphantom{=}
{}+ \Bigg( \sum_{i<j}^{\infty} \frac{q^{2\alpha +i+j}}{(1-q^{\alpha+i})(1-q^{\alpha+j})} -\sum_{k=0}^{\infty}\frac{q^{\alpha+k}}{1-q^{\alpha+k}} \Bigg) (1-P)^2
\\ \qquad\hphantom{=}
{}+ \Bigg({-}\sum_{i<j<l}^{\infty}\frac{q^{3\alpha +i+j+l}}{(1-q^{\alpha+i})(1-q^{\alpha+j})(1-q^{\alpha+l})} +2\sum_{i<j}^{\infty} \frac{q^{2\alpha +i+j}}{(1-q^{\alpha+i})(1-q^{\alpha+j})}
\\ \qquad\hphantom{=}
{}-\sum_{k=0}^{\infty}\frac{q^{\alpha+k}}{1-q^{\alpha+k}} \Bigg)(1-P)^3 \Bigg] + O\big((1-P)^4\big)	.
\end{gather*}

In order to simplify the computation, we introduce the following notations
\begin{gather*}
f_1(x) = \sum_{k=0}^{\infty}\frac{xq^{k}}{1-xq^{k}}, \\
f_2(x) =	\sum_{i<j}^{\infty} \frac{x^2q^{i+j}}{(1-xq^{i})(1-xq^{j})}, \\
f_3(x) = \sum_{i<j<l}^{\infty}\frac{x^3q^{i+j+l}}{(1-xq^{i})(1-xq^{j})(1-xq^{l})}
\end{gather*}
and
\begin{gather*}
	F_1(x_1,x_2,x_3,x_4) = -\sum_{k=1}^4 f_1(x_k), \\
	F_2(x_1,x_2,x_3,x_4) = -\sum_{k=1}^4 f_2(x_k) + \sum_{i<j}^4f_1(x_i)f_1(x_j), \\
	F_3(x_1,x_2,x_3,x_4) = -\sum_{k=1}^4 f_3(x_k) + \sum_{i<j}^4 (f_1(x_i)f_2(x_j)+ f_2(x_i)f_1(x_j))
\\ \hphantom{F_3(x_1,x_2,x_3,x_4) = }
{}- \sum_{i<j<l}^4f_1(x_i)f_1(x_j)f_1(x_l).
\end{gather*}
With a little computation, we obtain
\begin{align}
\frac{\prod_{i=1}^4\big(Pq^{\frac{i}{5}};q\big)_{\infty} }{(Pq;q)^4_\infty} = {}& \frac{\prod_{k=1}^{4}\big(q^{\frac{k}{5}};q\big)_{\infty}}{(q;q)^4_{\infty}}
\big[ 1 +\big(F_1\big(q^{\frac{\bullet}{5}}\big) -F_{1}(q)\big)(1-P) \nonumber
\\
&+ \big(F_1(q)^2-F_1\big(q^{\frac{\bullet}{5}}\big)F_1(q) - F_2(q) +F_2\big(q^{\frac{\bullet}{5}}\big) \big)(1-P)^2 \nonumber
\\
&+ \big( F_1(q)^3+F_1\big(q^{\frac{\bullet}{5}}\big) \big(F_1(q)^2-F_2(q) \big) +2F_1(q)F_2(q)-F_1(q)F_2\big(q^{\frac{\bullet}{5}}\big)\nonumber
\\
&- F_3(q) +F_3\big(q^{\frac{\bullet}{5}}\big) \big)(1-P)^3 \big]
	+ O\big((1-P)^4\big) \label{F_k} .
\end{align}
Here we use the notations:
\begin{gather*}
F_i\big(q^{\frac{\bullet}{5}}\big) := F_{i}\big(q^{\frac{1}{5}},q^{\frac{2}{5}},q^{\frac{3}{5}},q^{\frac{4}{5}}\big), 	
\\
F_{i}(q) := F_{i}(q,q,q,q).
\end{gather*}
{\samepage
For simplicity, we write the above formula as
\begin{gather}
\frac{\prod_{i=1}^4\big(Pq^{\frac{i}{5}};q\big)_{\infty} }{(Pq;q)^4_\infty} = 	\frac{\prod_{k=1}^{4}\big(q^{\frac{k}{5}};q\big)_{\infty}}{(q;q)^4_{\infty}} \Bigg[ 1+ \sum_{k=1}^{3}\mathbb{F}_k \cdot (1-p)^k + O\big((1-P)^4\big) \Bigg], \label{mathbb{F}}
\end{gather}
where $\mathbb{F}_k$ stands for the coefficient of $(1-P)^k$ in (\ref{F_k}).}

Similarly, we consider
\begin{gather*}
(Px;q)_{\infty}\big(P^{-1}x^{-1}q;q\big)_{\infty}.
\end{gather*}
Then
\begin{gather*}
\big(1-Pxq^d\big)\big(1-qP^{-1}x^{-1}q^d\big)
\\ \qquad
{}= \big(1-xq^d+xq^d(1-P) \big)
\big( \big(1-qx^{-1}q^d\big)-qx^{-1}q^d(1-P)-qx^{-1}q^d(1-P)^2
\\ \qquad \hphantom{=}
{}-qx^{-1}q^d(1-P)^3 +O\big((1-P)^4\big) \big)
\\ \qquad
{}= \big(1-xq^d\big)\big(1-qx^{-1}q^d\big) \bigg[ 1+ \frac{xq^d\big(1-qx^{-2}\big)}{\big(1-xq^d\big)\big(1-qx^{-1}q^d\big)}(1-P)
\\ \qquad \hphantom{=}
- \frac{q^{d+1}x^{-1}}{(1\!-xq^d)(1\!-qx^{-1}q^d)}(1-P)^2 -\frac{q^{d+1}x^{-1}}{(1\!-xq^d)(1\!-qx^{-1}q^d)}(1-P)^3\! +O\big((1-P)^4\big) \bigg].
\end{gather*}
Set
\begin{gather*}
g_1(x) = \sum_{d=0}^\infty \frac{xq^d\big(1-qx^{-2}\big)}{(1-xq^d)\big(1-qx^{-1}q^d\big)},
\\
g_2(x) = \sum_{i<j}^{\infty} \prod_{k=i,j}\bigg( \frac{xq^k\big(1-qx^{-2}\big)}{(1-xq^k)\big(1-qx^{-1}q^k\big)} \bigg) -\sum_{d=0}^{\infty} \frac{q^{d+1}x^{-1}}{(1-xq^d)\big(1-qx^{-1}q^d\big)},
\\
g_3(x) = \sum_{i<j<l}^{\infty} \prod_{k=i,j,l}\bigg( \frac{xq^k\big(1-qx^{-2}\big)}{(1-xq^k)\big(1-qx^{-1}q^k\big)} \bigg) -\sum_{i \neq j} \frac{q^{i+j+1}\big(1-qx^{-2}\big)}{\prod_{k=i,j}(1-xq^k)\big(1-qx^{-1}q^k\big)}
\\ \hphantom{g_3(x) =}
{}- \sum_{d=0}^\infty \frac{q^{d+1}x^{-1}}{(1-xq^d)\big(1-qx^{-1}q^d\big)}.
\end{gather*}
Then
\begin{gather*}
(Px;q)_{\infty}\big(P^{-1}x^{-1}q;q\big)_{\infty}
 \\ \qquad
{}= \frac{\theta_q(-x)}{(q;q)_{\infty}}\big( 1 + g_1(x)(1-P) + g_2(x)(1-P)^2 + g_3(x)(1-P)^3 +O\big((1-P)^4\big) \big).	
\end{gather*}
So we obtain
\begin{align}
\frac{\big(Pq^{\frac{k}{5}}Q,P^{-1}Q^{-1}q^{1-\frac{k}{5}};q\big)_{\infty}} {\big(Pq^{\frac{k}{5}},P^{-1}q^{1-\frac{k}{5}};q\big)_{\infty}} 	
= {}&\frac{\theta_q\big({-}q^{\frac{k}{5}}Q\big)}{\theta_q\big({-}q^{\frac{k}{5}}\big)} \big[ 1+ \big( g_1\big(q^{\frac{k}{5}}Q\big)-g_1\big(q^{\frac{k}{5}}\big) \big)(1-P) \nonumber
\\
&+\! \big({-}g_1\big(q^{\frac{k}{5}}Q\big)g_1\big(q^{\frac{k}{5}}\big)\! +\!g_1^2\big(q^{\frac{k}{5}}\big)\!+\!g_2\big(q^{\frac{k}{5}}Q\big) \!-\!g_2\big(q^{\frac{k}{5}}\big) \big)(1\!-\!P)^2 \nonumber
\\
&+\! \big({-}g_1^3\big(q^{\frac{k}{5}}\big) \!-\!g_1\big(q^{\frac{k}{5}}\big)g_2\big(q^{\frac{k}{5}}Q\big)\!+\!g_1\big(q^{\frac{k}{5}}Q\big) \big( g^2_1\big(q^{\frac{k}{5}}\big) \!-\!g_2\big(q^{\frac{k}{5}}\big) \big)\nonumber
\\
&+ 2g_1\big(q^{\frac{k}{5}}\big)g_2\big(q^{\frac{k}{5}}\big) +g_3\big(q^{\frac{k}{5}}Q\big)-g_3\big(q^{\frac{k}{5}}\big) \big)(1-P)^3 \big] \nonumber
\\
&+ O\big((1-P)^4\big). \label{G_k}
\end{align}
For simplicity, we write the above formula as follows
\begin{gather}
\frac{\big(Pq^{\frac{k}{5}}Q,P^{-1}Q^{-1}q^{1-\frac{k}{5}};q\big)_{\infty}} {\big(Pq^{\frac{k}{5}},P^{-1}q^{1-\frac{k}{5}};q\big)_{\infty}} = \frac{\theta_q\big({-}q^{\frac{k}{5}}Q\big)}{\theta_q\big({-}q^{\frac{k}{5}}\big)}
\Bigg[1 + \sum_{k=1}^3 \mathbb{G}_k \cdot (1-P)^k + O\big((1-P)^4\big) \Bigg],	 \label{mathbb{G}}
\end{gather}
where $\mathbb{G}_k $ stands for the coefficient of $(1-P)^k$ in (\ref{G_k}).

In conclusion, we arrive at the following corollary.
\begin{Corollary}
The $4$ solutions of \eqref{special-fuchsian-case} at $Q=0$ are given as the expansion of \eqref{auxiliary-q-series}, i.e.
\begin{align*}
\sum_{a+b=m} (-1)^a \binom{\ell_{q}(Q)}{a} X_b(q,Q), \qquad m=0,1,2,3	,
\end{align*}
where $X_b(q,Q)$ is defined in \eqref{X_b}. The $4$ solutions of \eqref{special-fuchsian-case} at $Q=\infty$ are given explicitly as \eqref{fuchsian-case-infty-1}--\eqref{fuchsian-case-infty-4}. The connection matrix is as follows
\begin{gather*}
\bullet\quad
X_0(q,Q)=\sum_{d=0}^{\infty} \frac{\prod_{i=1}^{4}\big(q^{\frac{i}{5}};q\big)_d}{(q;q)_d^4} Q^d
\\ \hphantom{\bullet\quad X_0(q,Q)}
{}=\sum_{k=1}^{4} \frac{\big(q^{\frac{1}{5}} \cdots \hat{k} \cdots q^{\frac{4}{5}},q^{1-\frac{k}{5}},q^{1-\frac{k}{5}},q^{1-\frac{k}{5}};q\big)_{\infty}} {\big(q^{\frac{1-k}{5}},\dots \hat{k} \dots, q^{\frac{4-k}{5}},q,q,q;q\big)_{\infty}} \frac{\big(q^{\frac{k}{5}}Q;q^{\frac{5-k}{5}}/Q\big)_\infty}{(Q,q/Q)_\infty}
\\ \hphantom{\bullet\quad X_0(q,Q)=}
{}\times {\rm e}^{-1}_{q,q^{k/5}}(1/Q) W_k(1/Q).
\\
\bullet\quad
X_1(q,Q)	 = \sum_{k=1}^{4} \frac{\big(q^{\frac{1}{5}} \cdots \hat{k} \cdots q^{\frac{4}{5}},q^{1-\frac{k}{5}},q^{1-\frac{k}{5}},q^{1-\frac{k}{5}};q\big)_{\infty}} {\big(q^{\frac{1-k}{5}},\dots \hat{k} \dots, q^{\frac{4-k}{5}},q,q,q;q \big)_{\infty}} \frac{\big(q^{\frac{k}{5}}Q;q^{\frac{5-k}{5}}/Q\big)_\infty}{(Q,q/Q)_\infty}
\\ \hphantom{\bullet\quad X_1(q,Q)	 =}
{}\times (\mathbb{G}_1+ \mathbb{F}_1) {\rm e}^{-1}_{q,q^{k/5}}(1/Q) W_k(1/Q).
\\
\bullet\quad
X_2(q,Q)	 = \sum_{k=1}^{4} \frac{\big(q^{\frac{1}{5}} \cdots \hat{k} \cdots q^{\frac{4}{5}},q^{1-\frac{k}{5}},q^{1-\frac{k}{5}},q^{1-\frac{k}{5}};q\big)_{\infty}} {\big(q^{\frac{1-k}{5}},\dots \hat{k} \dots, q^{\frac{4-k}{5}},q,q,q;q \big)_{\infty}} \frac{\big(q^{\frac{k}{5}}Q;q^{\frac{5-k}{5}}/Q\big)_\infty}{(Q,q/Q)_\infty}
\\ \hphantom{\bullet\quad X_2(q,Q)	 =}
{}\times (\mathbb{G}_2+ \mathbb{F}_2+ \mathbb{G}_1 \mathbb{F}_1) {\rm e}^{-1}_{q,q^{k/5}}(1/Q) W_k(1/Q).
\\
\bullet\quad
X_3(q,Q)	 = \sum_{k=1}^{4} \frac{\big(q^{\frac{1}{5}} \cdots \hat{k} \cdots q^{\frac{4}{5}},q^{1-\frac{k}{5}},q^{1-\frac{k}{5}},q^{1-\frac{k}{5}};q\big)_{\infty}} {\big(q^{\frac{1-k}{5}},\dots \hat{k} \dots, q^{\frac{4-k}{5}},q,q,q;q \big)_{\infty}} \frac{\big(q^{\frac{k}{5}}Q;q^{\frac{5-k}{5}}/Q\big)_\infty}{(Q,q/Q)_\infty}
\\ \hphantom{\bullet\quad X_3(q,Q)	 =}
{}\times(\mathbb{G}_3+ \mathbb{F}_3+ \mathbb{G}_2 \mathbb{F}_1 + \mathbb{G}_1 \mathbb{F}_2 ) {\rm e}^{-1}_{q,q^{k/5}}(1/Q) W_k(1/Q).
\end{gather*}
Here, for simplicity, we use the notations $\mathbb{G}_k$ and $\mathbb{F}_k$ defined in \eqref{mathbb{F}} and \eqref{mathbb{G}}.
\end{Corollary}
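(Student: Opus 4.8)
The plan is to derive both claims from the specialized analytic continuation of the preceding Corollary, equation~\eqref{Corollary-2-formula}, which is Theorem~\ref{Theorem} with $n=m=4$ and $\alpha_i=q^{i/5}$. The solutions at $Q=\infty$ are the $W_1,\dots,W_4$ already produced by the Frobenius method in \eqref{fuchsian-case-infty-1}--\eqref{fuchsian-case-infty-4}, so only the $Q=0$ solutions and the connection formula require argument. First I would check that the four expressions $\sum_{a+b=m}(-1)^a\binom{\ell_q(Q)}{a}X_b(q,Q)$, $m=0,1,2,3$, really are solutions of \eqref{special-fuchsian-case} at $Q=0$. Since the difference operator in \eqref{special-fuchsian-case} has coefficients free of $P$ while $P^{l_q(Q)}\sum_{d}\frac{\prod_i(Pq^{i/5};q)_d}{(Pq;q)_d^4}Q^d$ is annihilated by it modulo $(1-P)^4$, each coefficient of $(1-P)^m$ for $m\le 3$ must be annihilated separately; expanding $P^{l_q(Q)}=\sum_{a\ge0}(-1)^a\binom{\ell_q(Q)}{a}(1-P)^a$ and invoking the definition \eqref{X_b} identifies that coefficient with $\sum_{a+b=m}(-1)^a\binom{\ell_q(Q)}{a}X_b(q,Q)$.

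For the connection formula I would divide \eqref{Corollary-2-formula} by $P^{l_q(Q)}$, so that the coefficient of $(1-P)^b$ on the left is exactly $X_b(q,Q)$ by \eqref{X_b}, and then expand the right-hand side in powers of $(1-P)$. In each $j$-summand I separate the $P$-independent from the $P$-dependent factors: the middle theta-quotient is written as $\frac{(q,q;q)_\infty}{(Q,q/Q;q)_\infty}$ times
\[
\frac{\big(Pq^{j/5}Q;q\big)_\infty\big(P^{-1}q^{1-j/5}Q^{-1};q\big)_\infty}{\big(Pq^{j/5};q\big)_\infty\big(P^{-1}q^{1-j/5};q\big)_\infty},
\]
which is precisely the left-hand side of \eqref{G_k} with $k=j$, so its $(1-P)$-expansion is $\frac{\theta_q(-q^{j/5}Q)}{\theta_q(-q^{j/5})}\big[1+\sum_{k=1}^3\mathbb{G}_k(1-P)^k\big]+O\big((1-P)^4\big)$ by \eqref{mathbb{G}}; meanwhile $\frac{\prod_i(Pq^{i/5};q)_\infty}{(Pq;q)_\infty^4}$ expands by \eqref{mathbb{F}}. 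Multiplying the two series, the coefficient of $(1-P)^m$ equals $1$, $\mathbb{G}_1+\mathbb{F}_1$, $\mathbb{G}_2+\mathbb{F}_2+\mathbb{G}_1\mathbb{F}_1$, $\mathbb{G}_3+\mathbb{F}_3+\mathbb{G}_2\mathbb{F}_1+\mathbb{G}_1\mathbb{F}_2$ for $m=0,1,2,3$, exactly the weights appearing in the statement.

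It then remains to identify the $P$-independent prefactor. Collecting $\frac{\prod_k(q^{k/5};q)_\infty}{(q;q)_\infty^4}$, the factor $\frac{(q,q;q)_\infty}{(Q,q/Q;q)_\infty}\frac{\theta_q(-q^{j/5}Q)}{\theta_q(-q^{j/5})}$, and the final fraction $\frac{(q^{1-j/5};q)_\infty^4}{\prod_{i\neq j}(q^{(i-j)/5};q)_\infty(q;q)_\infty}$, and rewriting the theta-quotient by Jacobi's triple identity $\theta_q(-x)=(q;q)_\infty(x;q)_\infty(q/x;q)_\infty$, the $(q;q)_\infty$, $(q^{j/5};q)_\infty$ and $(q^{1-j/5};q)_\infty$ powers telescope into the compact product $\frac{(q^{1/5}\cdots\hat{k}\cdots q^{4/5},q^{1-k/5},q^{1-k/5},q^{1-k/5};q)_\infty}{(q^{(1-k)/5},\dots\hat{k}\dots,q^{(4-k)/5},q,q,q;q)_\infty}\frac{(q^{k/5}Q;q^{(5-k)/5}/Q)_\infty}{(Q,q/Q)_\infty}$ displayed in the Corollary. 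Matching the coefficient of each $(1-P)^m$ on the two sides then yields the four asserted formulas for $X_0,\dots,X_3$, with the common factor $e^{-1}_{q,q^{k/5}}(1/Q)W_k(1/Q)$ riding along unchanged.

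The computation is essentially bookkeeping, and the main obstacle is the clean separation of the $P$-dependent and $P$-independent parts together with the telescoping via the triple product; one must in particular take care that the $Q$-dependence concealed inside $g_i(q^{k/5}Q)$ within $\mathbb{G}_k$ is kept with the correct summand, since this is what renders the connection coefficients genuinely $q$-elliptic rather than constant, whereas the $\mathbb{F}_k$ remain functions of $q$ alone.
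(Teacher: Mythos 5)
Your proposal is correct and follows essentially the same route as the paper: specializing Theorem~\ref{Theorem} to $n=m=4$, $\alpha_i=q^{i/5}$, isolating the coefficients $X_b$ of $(1-P)^b$, expanding the right-hand side of \eqref{Corollary-2-formula} via the prepared expansions \eqref{mathbb{F}} and \eqref{mathbb{G}}, and collapsing the $P$-independent prefactor through Jacobi's triple product. Your explicit verification that each $(1-P)^m$-coefficient is separately annihilated (because the difference operator is $P$-free) and your remark on the ellipticity coming from $g_i\big(q^{k/5}Q\big)$ are details the paper leaves implicit, but the argument is the same.
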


\section[Confluence of the q-difference structure]
{Confluence of the $\boldsymbol q$-difference structure}\label{sec6}
Notice that
\begin{gather*}
\lim_{q \rightarrow 1} \frac{1-q^{Q\partial_Q}}{1-q} = Q\frac{\rm d}{{\rm d} Q},	
\end{gather*}
then one could easily see that the following difference equation is confluent to (\ref{differential-equation}), i.e.,
\begin{gather*}
\lim_{q \rightarrow 1} 	\Big[ \big(1-q^{Q\partial_Q}\big)^4-Q\big(1-q^{\frac{1}{5}}q^{Q\partial_Q}\big)
\big(1-q^{\frac{2}{5}}q^{Q\partial_Q}\big)\big(1-q^{\frac{3}{5}}q^{Q\partial_Q}\big) \big(1-q^{\frac{4}{5}}q^{Q\partial_Q}\big) \Big]/(1-q)^4
\\ \qquad
{}= \bigg[ \bigg(Q\frac{\rm d}{{\rm d}Q}\bigg)^4- Q\bigg(Q\frac{\rm d}{{\rm d}Q}+\frac{1}{5}\bigg)\bigg(Q\frac{\rm d}{{\rm d}Q}+\frac{2}{5}\bigg)\bigg(Q\frac{\rm d}{{\rm d}Q}+\frac{3}{5}\bigg)\bigg(Q\frac{\rm d}{{\rm d}Q}+\frac{4}{5}\bigg) \Bigg].
\end{gather*}
In the following, we set $q(t)={\rm e}^{-t}$ and $P = q^H = {\rm e}^{-tH}$.

\begin{Lemma}
\begin{gather*}
\lim_{t \rightarrow 0} 	P^{l_q(Q)}\sum_{d=0}^{\infty} \frac{\prod_{i=1}^{4}\big(Pq^{\frac{i}{5}};q\big)_d}{(Pq;q)_d^4} Q^d = Q^{H} \sum_{d=0}^{\infty} \frac{\prod_{i=1}^{4}\big(H+\frac{i}{5}\big)_d}{(H+1)_d^4} Q^d \mod\big(H^4\big).
\end{gather*}

\begin{proof}
Since
\begin{gather*}
\lim_{t \rightarrow 0} \frac{\prod_{k=1}^d\big(1-Pq^k\big)}{\prod_{k=1}^d\big(1-q^k\big)} = \frac{\prod_{k=1}^d(H+k)}{d!}, \qquad
\lim_{t \rightarrow 0} \frac{(1-P)^k}{(1-q)^k} = H^k,
\end{gather*}
and	
\begin{gather*}
P^{l_q(Q)} = \sum_{k \geq 0} (-1)^k \binom{\ell_{q}(Q)}{k}	(1-P)^k	 = \sum_{k \geq 0} (q-1)^k \binom{\ell_{q}(Q)}{k}\frac{(1-P)^k}{(1-q)^k}.
\end{gather*}
Then from Proposition \ref{confluence-ell-e}, we arrive at the conclusion.
\end{proof}

\end{Lemma}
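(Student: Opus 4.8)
The plan is to split the left-hand side into its scalar prefactor $P^{l_q(Q)}$ and the $q$-hypergeometric series $\sum_{d\ge 0}\frac{\prod_{i=1}^4(Pq^{i/5};q)_d}{(Pq;q)_d^4}Q^d$, to pass to the limit $t\to 0$ in each factor separately, and then to multiply the two limits. Throughout I work modulo $(1-P)^4$, equivalently modulo $H^4$ (recall $P=q^H=e^{-tH}$ and $(1-P)^4=0$), so every expansion in the basis $(1-P)^k$ is a finite sum over $k=0,1,2,3$ and the only genuine analytic limits occur in the variable $Q$ and the summation index $d$.

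First I would compute the series coefficients. With $q=e^{-t}$, each $q$-Pochhammer symbol factors as $(Pq^{\alpha};q)_d=\prod_{k=0}^{d-1}\bigl(1-q^{H+\alpha+k}\bigr)$, and since $1-q^{a}=1-e^{-ta}=ta+O(t^2)$ one has $(Pq^{\alpha};q)_d=t^{\,d}(H+\alpha)_d\bigl(1+O(t)\bigr)$, where $(H+\alpha)_d=\prod_{k=0}^{d-1}(H+\alpha+k)$ is the rising factorial. Applying this to the four numerator factors ($\alpha=\frac{i}{5}$) and to the fourth power in the denominator ($\alpha=1$), the common factor $t^{4d}$ cancels and
\[
\lim_{t\to 0}\frac{\prod_{i=1}^4(Pq^{i/5};q)_d}{(Pq;q)_d^4}=\frac{\prod_{i=1}^4\bigl(H+\frac{i}{5}\bigr)_d}{(H+1)_d^4},
\]
the $d$-th coefficient on the right-hand side. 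Specializing the same computation recovers the two auxiliary limits $\lim_{t\to 0}\frac{\prod_{k=1}^d(1-Pq^k)}{\prod_{k=1}^d(1-q^k)}=\frac{\prod_{k=1}^d(H+k)}{d!}$ and $\lim_{t\to 0}\frac{(1-P)^k}{(1-q)^k}=H^k$.

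Next I would treat the prefactor. Expanding $P^{l_q(Q)}=\bigl(1-(1-P)\bigr)^{\ell_q(Q)}$ by the binomial series and rewriting it as $\sum_{k\ge 0}(q-1)^k\binom{\ell_q(Q)}{k}\frac{(1-P)^k}{(1-q)^k}$, I would combine $\frac{(1-P)^k}{(1-q)^k}\to H^k$ with the confluence of the $q$-logarithm from Proposition \ref{confluence-ell-e}, which gives $(q-1)\ell_q(Q)\to\log Q$. Since $\binom{\ell_q(Q)}{k}$ is a degree-$k$ polynomial in $\ell_q(Q)$ with leading coefficient $1/k!$, one has $(q-1)^k\binom{\ell_q(Q)}{k}=\frac{1}{k!}\bigl((q-1)\ell_q(Q)\bigr)^k+O(q-1)\to\frac{(\log Q)^k}{k!}$, the lower-order monomials being annihilated by the surplus factors of $(q-1)\to 0$. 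Summing over $k=0,1,2,3$ yields $\lim_{t\to 0}P^{l_q(Q)}=\sum_{k}\frac{(H\log Q)^k}{k!}=e^{H\log Q}=Q^H$ modulo $H^4$, and multiplying this by the limit of the series gives the assertion.

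The \emph{main obstacle} is not these coefficientwise limits, which are elementary, but justifying the interchange of $\lim_{t\to 0}$ with the infinite sum over $d$ (and, for the normalizations, with the infinite products defining the $q$-Pochhammer symbols at $d=\infty$). Concretely I would dominate the coefficients $\bigl|\frac{\prod_{i=1}^4(Pq^{i/5};q)_d}{(Pq;q)_d^4}\bigr|$ uniformly for $t$ near $0$ by a convergent comparison series, thereby obtaining uniform convergence of the $q$-series on compact subsets of the punctured disc $\{0<|Q|<1\}$, so that the limit of the sum is the sum of the limits; the convergence in Proposition \ref{confluence-ell-e} is already uniform on compacta of $\Omega$, which supplies the matching control for the prefactor. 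A secondary point to track is the branch bookkeeping for $\log Q$ (versus $\log(-Q)$) forced by the orientation convention in Proposition \ref{confluence-ell-e}, which fixes the branch of $Q^H=e^{H\log Q}$ appearing on the right-hand side.
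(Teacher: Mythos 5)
Your proposal follows essentially the same route as the paper's own proof: split off the prefactor $P^{l_q(Q)}$, take coefficientwise limits of the $q$-Pochhammer ratios to get $\frac{\prod_{i=1}^4(H+\frac{i}{5})_d}{(H+1)_d^4}$, and handle the prefactor via the expansion $P^{l_q(Q)}=\sum_{k\geq 0}(q-1)^k\binom{\ell_q(Q)}{k}\frac{(1-P)^k}{(1-q)^k}$ combined with the confluence $(q-1)\ell_q(Q)\to\log Q$ and $\frac{(1-P)^k}{(1-q)^k}\to H^k$, exactly as in the paper. Your additional care about interchanging $\lim_{t\to 0}$ with the sum over $d$ and about the branch of $\log Q$ (the $-Q$ in Proposition \ref{confluence-ell-e}) supplies details the paper leaves implicit, but it does not change the argument.
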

The $q$-Gamma function is defined as follows
\begin{gather*}
	 \Gamma_q(x) = \frac{(q;q)_\infty}{(q^x;q)_\infty}(1-q)^{1-x}.	
\end{gather*}
It has a nice property
\begin{gather*}
\lim_{q \rightarrow 1} \Gamma_q(x) = \Gamma(x).	
\end{gather*}
Using $q$-Gamma function, we rewrite (\ref{Corollary-2-formula}) in the following form
\begin{gather}
P^{l_q(Q)}\sum_{d=0}^{\infty} \frac{\prod_{i=1}^{4}\big(Pq^{\frac{i}{5}};q\big)_d}{(Pq;q)_d^4} Q^d \nonumber
\\ \qquad
{}= P^{l_q(Q)}\frac{\Gamma_q(H+1)^4}{\Gamma_q\big(H+\frac{1}{5}\big)\Gamma_q\big(H+\frac{2}{5}\big)
\Gamma_q\big(H+\frac{3}{5}\big)\Gamma_q\big(H+\frac{4}{5}\big)} \sum_{k=1}^{4} \frac{\Gamma_q\big(\frac{1-k}{5}\big) \cdots \hat{k} \cdots \Gamma_q\big(\frac{4-k}{5}\big)}{\Gamma_q\big(1-\frac{k}{5}\big)^4}\nonumber
\\ \qquad\hphantom{=}
{}\times \Gamma_q\bigg(H+\frac{k}{5}\bigg) \Gamma_q\bigg({-}H+1-\frac{k}{5}\bigg) \frac{\theta_{q}\big({-}q^{H+k/5}Q\big)}{\theta_q(-Q)} {\rm e}^{-1}_{q,q^{k/5}}(1/Q) W_k(1/Q).	 \label{rewrite-in-q-Gamma}
\end{gather}
After taking limit, we arrive at the following proposition.

\begin{Proposition}
\begin{gather}
Q^{H}\sum_{d \geq 0} \frac{\prod_{k=1}^{5d}(5H+k)}{\prod_{k=1}^d(H+k)^5} \big(Q/5^5\big)^{d} \nonumber
\\ \qquad
{} = \frac{5^{5H} \Gamma(H+1)^5 }{\Gamma(5H+1)} \sum_{k=1}^{4} \frac{5^{k-1} \Gamma(5-k)}{\prod_{i=1,i\neq k}^4(i-k) \Gamma\big(1-\frac{k}{5}\big)^5} \frac{\pi {\rm e}^{-\pi {\rm i} \left(H + \frac{k}{5}\right)} }{\sin\big(\pi\big(H+\frac{k}{5}\big)\big) } \widetilde{W}_k(1/Q), \label{Prop6}
 \end{gather}
where
\begin{gather*}
\widetilde{W}_k=\sum_{d \geq 0} \frac{\prod_{l=0}^{d-1}\big(\frac{k}{5}+l\big)^5}{\prod_{l=0}^{5d-1}(k+l)} \big(5^5/Q\big)^d.
\end{gather*}
\end{Proposition}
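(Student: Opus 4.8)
The plan is to derive the identity as the confluent limit ($t\to 0$, i.e.\ $q\to 1$) of the exact relation \eqref{rewrite-in-q-Gamma}, which is nothing but the Corollary repackaged through the $q$-Gamma function. Since the prefactor $P^{l_q(Q)}$ is common to both sides, I first settle the left-hand side: the preceding Lemma already gives $\lim_{t\to0}P^{l_q(Q)}\sum_d\frac{\prod_{i}(Pq^{i/5};q)_d}{(Pq;q)_d^4}Q^d = Q^{H}\sum_d\frac{\prod_i(H+i/5)_d}{(H+1)_d^4}Q^d$ modulo $H^4$, and it remains to recognise this as the left-hand side of the statement. That is an elementary Pochhammer manipulation: comparing consecutive-coefficient ratios, $\frac{\prod_{i=1}^4(H+i/5+d)}{(H+1+d)^4}$ against $\frac{\prod_{k=5d+1}^{5d+5}(5H+k)}{5^5(H+d+1)^5}$, and pulling a factor $5$ out of each of the five numerator factors (so that the global $5^5$ cancels and the $k=5d+5$ factor $5(H+d+1)$ kills one power of the denominator) shows the two series agree term by term.

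For the right-hand side I would pass to the limit factor by factor, each limit being uniform on compacta (by Proposition \ref{confluence-ell-e} and Sauloy's theta-quotient lemma), which is what legitimises exchanging $\lim_{t\to0}$ with the finite sum over $k$ and with the convergent $q$-hypergeometric series. The $q$-Gamma quotients tend to their classical analogues, $\Gamma_q\to\Gamma$; the reflection formula turns $\Gamma(H+\tfrac k5)\Gamma(1-H-\tfrac k5)$ into $\pi/\sin(\pi(H+\tfrac k5))$; and the fivefold Gauss multiplication formula $\prod_{i=1}^{5}\Gamma(H+\tfrac i5)=(2\pi)^2 5^{-5H-1/2}\Gamma(5H+1)$ converts the $H$-dependent prefactor $\frac{\Gamma(H+1)^4}{\prod_{i=1}^4\Gamma(H+i/5)}$ into $\frac{5^{5H+1/2}}{(2\pi)^2}\,\frac{\Gamma(H+1)^5}{\Gamma(5H+1)}$. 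The factor ${\rm e}^{-1}_{q,q^{k/5}}(1/Q)W_k(1/Q)$ is exactly the bare ${}_4\phi_3$-series of \eqref{fuchsian-case-infty-1}--\eqref{fuchsian-case-infty-4} (the $q$-character cancels), and its termwise limit reproduces $\widetilde W_k(1/Q)$ once $(5d)!=5^{5d}d!\prod_{j=1}^4(j/5)_d$ is used to repackage the limiting Pochhammer ratio and the argument $1/Q$ into the displayed $5^5/Q$. Finally, Sauloy's lemma gives $\frac{\theta_q(-q^{H+k/5}Q)}{\theta_q(-Q)}\to(-Q)^{-(H+k/5)}$, and combining this with $P^{l_q(Q)}\to Q^{H}$ yields $Q^{H}(-Q)^{-(H+k/5)}={\rm e}^{-\pi{\rm i}(H+k/5)}Q^{-k/5}$; this is the source of the phase ${\rm e}^{-\pi{\rm i}(H+k/5)}$ in the statement, the residual $Q^{-k/5}$ being the indicial exponent of the Frobenius solution at $Q=\infty$ carried by $\widetilde W_k$.

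Collecting the $k$-dependent constants then reduces the proof to the finite identity $\frac{\sqrt5}{4\pi^2}\,\prod_{i\neq k}\Gamma\big(\tfrac{i-k}{5}\big)\cdot\prod_{i\neq k}(i-k)\cdot\Gamma\big(1-\tfrac k5\big)=5^{k-1}\Gamma(5-k)$ for $k=1,2,3,4$, which matches the coefficient $\frac{5^{k-1}\Gamma(5-k)}{\prod_{i\neq k}(i-k)\,\Gamma(1-k/5)^5}$ once the surviving $\sqrt5/(2\pi)^2$ from Gauss multiplication is distributed into each summand; via reflection this collapses to the closed value $\Gamma(\tfrac15)\Gamma(\tfrac25)\Gamma(\tfrac35)\Gamma(\tfrac45)=4\pi^2/\sqrt5$ (equivalently $\sin\tfrac\pi5\sin\tfrac{2\pi}5=\sqrt5/4$), and I have checked that the $k=1$ case returns $6=\Gamma(4)$. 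I expect the main obstacle to be the branch bookkeeping: the entire phase ${\rm e}^{-\pi{\rm i}(H+k/5)}$ is manufactured by the determination $\log(-Q)=\log Q+{\rm i}\pi$ hidden inside Sauloy's lemma and inside the $-Q$ arguments of the theta quotient, so the delicate point is to fix one consistent branch of the logarithm across $P^{l_q(Q)}$, the theta quotient and the $q$-character simultaneously, and to confirm that the leftover powers of $Q$ reassemble into the correct exponents $Q^{-k/5}$ rather than spurious factors.
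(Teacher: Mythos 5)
Your proposal follows essentially the same route as the paper's own proof: take the $t\to 0$ limit of \eqref{rewrite-in-q-Gamma}, handle the left-hand side via the preceding Lemma, pass to the limit factor by factor on the right-hand side (with $\Gamma_q\to\Gamma$, Sauloy's theta-quotient lemma, the reflection formula, and the fivefold Gauss multiplication formula), and identify the limit of the bare ${}_4\phi_3$ with $\widetilde W_k$ through $(5d)!=5^{5d}d!\prod_{j=1}^4(j/5)_d$. The differences are cosmetic---you verify the left-hand side by a term-ratio Pochhammer manipulation rather than the Gamma multiplication identity, and you package the $k$-dependent constants into a single finite identity checked via $\sin\tfrac{\pi}{5}\sin\tfrac{2\pi}{5}=\tfrac{\sqrt5}{4}$---and your explicit bookkeeping of the phase and of the residual factor $Q^{-k/5}$ (which the paper's computation passes over silently) is, if anything, the more careful treatment of the one delicate point.
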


\begin{proof}
After taking limit, the left hand side of (\ref{rewrite-in-q-Gamma}) becomes	
\begin{gather*}
Q^{H} \sum_{d=0}^{\infty} \frac{\prod_{i=1}^{4}\big(H+\frac{i}{5}\big)_d}{(H+1)_d^4}Q^d	= Q^H \sum_{d=0}^{\infty} \prod_{i=1}^{4} \bigg( \frac{\Gamma\big(H+\frac{i}{5}+d\big)}{\Gamma\big(H+\frac{i}{5}\big)} \bigg) \bigg(\frac{\Gamma(H+1)}{\Gamma(H+d+1)} \bigg)^4 Q^d.
\end{gather*}
Recall some formulas of Gamma function:
\begin{gather*}
\Gamma(x)\Gamma(1-x) = \frac{\pi}{\sin(\pi x)},
\\
\Gamma(nx)(2\pi)^{(n-1)/2} = n^{nx-\frac{1}{2}} \Gamma(x) \Gamma\bigg(x+\frac{1}{n}\bigg) \cdots \Gamma\bigg(x+\frac{n-1}{n}\bigg).	
\end{gather*}
Then
\begin{gather*}
\prod_{i=1}^{4} \bigg(\frac{\Gamma\big(H+\frac{i}{5}+d\big)}{\Gamma\big(H+\frac{i}{5}\big)}\bigg) \frac{\Gamma(H+d+1)}{\Gamma(H+1)} = \frac{1}{5^{5d}} \frac{\Gamma(5H+5d+1)}{\Gamma(5H+1)}.	
\end{gather*}
Thus, we arrive at the left-hand side of (\ref{Prop6}).

After taking limit, the right-hand side of (\ref{rewrite-in-q-Gamma}) becomes
\begin{gather*}
Q^H \frac{\Gamma(H+1)^4}{\Gamma\big(H+\frac{1}{5}\big) \cdots \Gamma\big(H+\frac{4}{5}\big)} \sum_{k=1}^{4} \frac{\Gamma\big(\frac{1-k}{5}\big) \cdots \hat{k} \cdots \Gamma\big(\frac{4-k}{5}\big) }{\Gamma\big(1-\frac{k}{5}\big)^4}	
 \Gamma\bigg(H+\frac{k}{5}\bigg)\Gamma\bigg({-}H+1-\frac{k}{5}\bigg) \\ \qquad
 {}\times (-Q)^{-H-\frac{k}{5}} \widetilde{W}_k(1/Q)
= \frac{\Gamma(H+1)^4}{\Gamma\big(H+\frac{1}{5}\big) \cdots \Gamma\big(H+\frac{4}{5}\big)} \sum_{k=1}^{4} \frac{\Gamma\big(\frac{1-k}{5}\big) \cdots \hat{k} \cdots \Gamma\big(\frac{4-k}{5}\big) }{\Gamma\big(1-\frac{k}{5}\big)^4}	
\\ \qquad
{}\times \Gamma\bigg(H+\frac{k}{5}\bigg)\Gamma\bigg({-}H+1-\frac{k}{5}\bigg) {\rm e}^{ \pi {\rm i} \left(-H -\frac{k}{5}\right)} \widetilde{W}_k(1/Q),
\end{gather*}
similarly,
\begin{gather*}
\frac{\Gamma(H+1)^4}{\Gamma\big(H+\frac{1}{5}\big) \cdots \Gamma\big(H+\frac{4}{5}\big)}
= \frac{ \Gamma(H+1)^5 }{\Gamma(5H+1)} \frac{5^{5H+1/2}}{(2\pi)^{2}},
\end{gather*}
and
\begin{align*}
\frac{\Gamma\big(\frac{1-k}{5}\big) \cdots \hat{k} \cdots \Gamma\big(\frac{4-k}{5}\big) }{\Gamma\big(1-\frac{k}{5}\big)^4} &=\frac{5^3}{(1-k)\cdots \hat{k} \cdots (4-k)} \frac{\prod_{i=1}^4\Gamma\big(1-\frac{k}{5}+\frac{i}{5}\big) \Gamma\big(1-\frac{k}{5}\big)}{\Gamma\big(1-\frac{k}{5}\big)^5}
\\
&=\frac{5^{k-1-1/2}(2\pi)^2}{(1-k)\cdots \hat{k} \cdots (4-k)} \frac{ \Gamma(5-k)}{\Gamma\big(1-\frac{k}{5}\big)^5}.
\end{align*}
Thus we arrive at the right-hand side of (\ref{Prop6}).
\end{proof}

\begin{Remark}
Recall that in the introduction, we have the change of variables
\begin{gather*}
Q=5^5{\rm e}^t.	
\end{gather*}
Under the above change of variables, (\ref{Prop6}) becomes
\begin{gather*}
{\rm e}^{tH}\sum_{d \geq 0} \frac{\prod_{k=1}^{5d}(5H+k)}{\prod_{k=1}^d(H+k)^5} {\rm e}^{td} = \frac{ \Gamma(H+1)^5 }{\Gamma(5H+1)} \sum_{k=1}^{4} \frac{5^{k-1} \Gamma(5-k)}{\prod_{i=1,i\neq k}^4(i-k) \Gamma\big(1-\frac{k}{5}\big)^5} \frac{\pi {\rm e}^{-\pi {\rm i} \left(H + \frac{k}{5}\right)} }{\sin\big(\pi\big(H+\frac{k}{5}\big)\big) } \widetilde{W}_k.
\end{gather*}
From \cite{CR13}, we know
\begin{gather*}
	\frac{ \Gamma(H+1)^5 }{\Gamma(5H+1)} = 1 + C (2\pi {\rm i})^2 H^2 - E(2 \pi {\rm i})^3 H^3 + O\big(H^4\big),
\end{gather*}
where $C=5/12$ and $E=- \xi(3)40/(2\pi {\rm i})^3 $ with $\xi(3)$ equals to Ap\'ery's constant, i.e., it is related to the intersection theory of the quintic three-fold. We hope the expansion of the above equation on both sides with respect to the basis $\{H^i \}_{i=0}^3$ will match the result in \cite[formula~(53)]{CR13} up to the monodromy at $0 $ and $\infty$. For additional discussion on confluence, see~\cite{Ro19} for projective spaces, and~\cite{MR21} for weak Fano manifolds.
\end{Remark}

\subsection*{Acknowledgements}
The author would like to thank Professor Yongbin Ruan for suggesting this problem and for valuable discussions. Thanks are also due to Professor Shuai Guo and Dr.\ Yizhen Zhao for their helpful discussion. This work was initiated during the author's stay at the Institute For Advanced Study In Mathematics (IASM) at Zhejiang University. The author would like to express his thanks to IASM, Professor Bohan Fan, Professor Huijun Fan, and Peking University for their helpful support during this visit. The author wants to thank the anonymous referees who help improve the paper a lot. The author is supported by a KIAS Individual Grant (MG083901) at Korea Institute for Advanced Study.

\pdfbookmark[1]{References}{ref}
\LastPageEnding


\begin{thebibliography}{99}
\footnotesize\itemsep=-1pt

\bibitem{Adams2}
Adams C.R., On the irregular cases of the linear ordinary difference equation,
 \href{https://doi.org/10.2307/1989081}{\textit{Trans. Amer. Math. Soc.}} \textbf{30} (1928), 507--541.

\bibitem{Adams1931}
Adams C.R., Linear {$q$}-difference equations, \href{https://doi.org/10.1090/S0002-9904-1931-05162-4}{\textit{Bull. Amer. Math. Soc.}}
 \textbf{37} (1931), 361--400.

\bibitem{CdGP}
Candelas P., de~la Ossa X.C., Green P.S., Parkes L., A pair of {C}alabi--{Y}au
 manifolds as an exactly soluble superconformal theory, \href{https://doi.org/10.1016/0550-3213(91)90292-6}{\textit{Nuclear
 Phys.~B}} \textbf{359} (1991), 21--74.

\bibitem{CR13}
Chiodo A., Ruan Y., Landau--{G}inzburg/{C}alabi--{Y}au correspondence for
 quintic three-folds via symplectic transformations, \href{https://doi.org/10.1007/s00222-010-0260-0}{\textit{Invent. Math.}}
 \textbf{182} (2010), 117--165, \href{https://arxiv.org/abs/0812.4660}{arXiv:0812.4660}.

\bibitem{GS21}
Garoufalidis S., Scheidegger E., On the quantum {K}-theory of the quintic,
 \href{https://doi.org/10.3842/SIGMA.2022.021}{\textit{SIGMA}} \textbf{18} (2022), 021, 20~pages, \href{https://arxiv.org/abs/2101.07490}{arXiv:2101.07490}.

\bibitem{GR04}
Gasper G., Rahman M., Basic hypergeometric series, 2nd ed., \textit{Encyclopedia of
 Mathematics and its Applications}, Vol.~96, \href{https://doi.org/10.1017/CBO9780511526251}{Cambridge University
 Press}, Cambridge, 2004.

\bibitem{givental1998}
Givental A., A mirror theorem for toric complete intersections, in Topological
 Field Theory, Primitive Forms and Related Topics ({K}yoto, 1996),
 \textit{Progr. Math.}, Vol.~160, \href{https://doi.org/10.1007/978-1-4612-0705-4_5}{Birkh\"auser Boston}, Boston, MA, 1998,
 141--175.

\bibitem{givental2000wdvv}
Givental A., On the {WDVV} equation in quantum {$K$}-theory, \href{https://doi.org/10.1307/mmj/1030132720}{\textit{Michigan
 Math.~J.}} \textbf{48} (2000), 295--304, \href{https://arxiv.org/abs/math.AG/0003158}{arXiv:math.AG/0003158}.

\bibitem{givental2015}
Givental A., Permutation-equivariant quantum $K$-theory V.~Toric
 $q$-hypergeometric functions, \href{https://arxiv.org/abs/1509.03903}{arXiv:1509.03903}.

\bibitem{GDZ21}
Gu W., Pei D., Zhang M., On phases of 3d {$\mathcal{N} = 2$}
 {C}hern--{S}imons-matter theories, \href{https://doi.org/10.1016/j.nuclphysb.2021.115604}{\textit{Nuclear Phys.~B}} \textbf{973}
 (2021), 115604, 20~pages, \href{https://arxiv.org/abs/2105.02247}{arXiv:2105.02247}.

\bibitem{lee2004quantum}
Lee Y.-P., Quantum {$K$}-theory. {I}.~{F}oundations, \href{https://doi.org/10.1215/S0012-7094-04-12131-1}{\textit{Duke Math.~J.}}
 \textbf{121} (2004), 389--424, \href{https://arxiv.org/abs/math.AG/0105014}{arXiv:math.AG/0105014}.

\bibitem{MR21}
Milanov T., Roquefeuil A., Confluence in quantum $K$-theory of weak Fano
 manifolds and $q$-oscillatory integrals for toric manifolds,
 \href{https://arxiv.org/abs/2108.08620}{arXiv:2108.08620}.

\bibitem{Ro19}
Roquefeuil A., Confluence of quantum $K$-theory to quantum cohomology for
 projective spaces, \href{https://arxiv.org/abs/1911.00254}{arXiv:1911.00254}.

\bibitem{RW21}
Ruan Y., Wen Y., Quantum $K$-theory and $q$-difference equations,
 \href{https://arxiv.org/abs/2109.02218}{arXiv:2109.02218}.

\bibitem{Sauloy00}
Sauloy J., Syst\`emes aux {$q$}-diff\'erences singuliers r\'eguliers:
 classification, matrice de connexion et monodromie, \href{https://doi.org/10.5802/aif.1784}{\textit{Ann. Inst.
 Fourier (Grenoble)}} \textbf{50} (2000), 1021--1071.

\bibitem{hardouin:hal-01959879}
Sauloy J., Analytic study of {$q$}-difference equations, in Galois Theories of
 Linear Difference Equations: an Introduction, \textit{Math. Surveys Monogr.},
 Vol.~211, Amer. Math. Soc., Providence, RI, 2016, 103--171.

\end{thebibliography}
\end{document}